\renewcommand{\theequation}{\arabic{section}.\arabic{equation}}
\newtheorem{theorem}{Theorem}[section]
\newtheorem{proposition}{Proposition}[section]
\newtheorem{definition}{Definition}[section]
\newtheorem{corollary}{Corollary}[section]
\newtheorem{lemma}{Lemma}[section]
\def\e{\mathbb{E}}
\def\p{\mathbb{P}}
\title[Extinction times of multitype continuous-state branching processes]
{Extinction times of multitype continuous-state branching processes}
\author{Lo\"ic Chaumont}
\author{Marine Marolleau}
\address{Lo\"ic Chaumont -- LAREMA -- UMR CNRS 6093, Universit\'e d'Angers, 2 bd Lavoisier, 49045 Angers cedex~01}
\email{loic.chaumont@univ-angers.fr}
\address{Marine Marolleau -- LAREMA -- UMR CNRS 6093, Universit\'e d'Angers, 2 bd Lavoisier, 49045 Angers cedex~01}
\email{marine.marolleau@univ-angers.fr}
\keywords{Multitype continuous-state branching process, extinction time, 
spectrally positive additive L\'evy field, Lamperti representation.}
\subjclass[2010]{60J80}
\thanks{}
\date{\today}
\begin{document}

\begin{abstract} 
A multitype continuous-state branching process (MCSBP) ${\rm Z}=({\rm Z}_{t})_{t\geq 0}$, is a 
Markov process with values in $[0,\infty)^{d}$ that satisfies the branching property. 
Its distribution is characterised by its branching mechanism, that is the data of $d$ Laplace exponents 
of $\mathbb{R}^d$-valued spectrally positive L\'evy processes, each one having $d-1$ increasing components. 
We give an expression of the probability for a MCSBP to tend to 0 at infinity in term of its branching 
mechanism. Then we prove that this extinction holds at a finite time if and only if some condition bearing 
on the branching mechanism holds. This condition extends Grey's condition that is well known for $d=1$. 
Our arguments bear on elements of fluctuation theory for spectrally positive additive L\'evy fields 
recently obtained in \cite{cma1} and an extension of the Lamperti representation in higher dimension 
proved in \cite{cpgub}.  
\end{abstract}

\maketitle

\section{Introduction}

A multitype continuous state branching process (MCSBP), ${\rm Z}$, with family of probability 
measures  $\mathbb{P}_{\rm r}$, ${\rm r}\in\mathbb{R}^{d}_{+}$, $d\geq 1$, is a 
$[0,\infty)^{d}$-valued Markov process, satisfying the \emph{branching property} : 
\[\mathbb{E}_{{\rm r}_{1}+{\rm r}_{2}}(e^{-\langle\lambda,{\rm Z}_{t}\rangle}) = 
\mathbb{E}_{{\rm r}_{1}}(e^{-\langle\lambda,{\rm Z}_{t}\rangle})
\mathbb{E}_{{\rm r}_{2}}(e^{-\langle\lambda,{\rm Z}_{t}\rangle}),\;\;
\lambda,{\rm r}_{1},{\rm r}_{2}\in\mathbb{R}^{d}_{+}.\]
MCSBP's were first introduced in the late 60's by Watanabe \cite{wa} and there has been renewed interest in 
such processes in more recent years with the works of Duffie, Filipovi\'c and Schachermayer \cite{dfs},
Barczy, Li and Pap \cite{blp}, Caballero, P\'erez Garmendia and Uribe Bravo \cite{cpgub}, Kyprianou and Palau
\cite{kp},...

Asymptotic behavior and extinction times are among the most studied questions in recent times.
In the articles \cite{kp} and \cite{kpr}, Kyprianou, Palau and Ren provide a counterpart to the well known case of 
multitype Galton Watson processes by proving that the asymptotic behaviour of the MCSBP ${\rm Z}$ is characterized 
by the value of the Perron-Frobenius eigenvalue of the mean matrix denoted here by $\rho$. 
More specifically, extinction occurs if and only if $\rho\leq 0$. In this case, 
${\rm Z}$ is said to be critical ($\rho=0$) or sub-critical ($\rho<0$). When $\rho>0$, the process is said to be
super-critical and 
${\rm Z}$ either becomes extinct with positive probability or has exponential growth under a log-condition. 
We will see here that, as in dimension one, extinction in continuous time differs from the discrete case. 
Indeed, in addition to the possibility of becoming extinct in a finite time, that is there exists $t\geq 0$ such 
that for all $i\in[d]$, $Z^{(i)}_{t}=0$, the process can be extinguished at infinity, that is for all $t\geq 0$, 
there exists $i\in[d]$ such that $Z^{(i)}_{t}>0$ and $\lim\limits_{t\rightarrow +\infty} {\rm Z}_{t}=0$.
When $d=1$, Grey's condition, see \cite{gr}, gives a necessary and sufficient condition for the continuous state 
branching process $Z$ to become extinct at a finite time in terms of its branching mechanism, $\varphi$. More 
precisely, $Z$ becomes extinct at a finite time if and only if 
\begin{equation}\label{9930}
\int^\infty\frac{ds}{\varphi(s)}<\infty\,.
\end{equation}
However, when $d>1$, we do not know how to distinguish these two 'types' of extinction.
The aim of this article is to provide necessary and sufficient conditions bearing on the mechanism of ${\rm Z}$ 
for extinction to take place at a finite time, in order to extend Grey's condition (\ref{9930}) to the multitype 
case.

One of the major results used for our proof is the extension to the multitype case of the Lamperti representation,
recently obtained in \cite{cpgub}. This result asserts that ${\rm Z}$ can be represented as the unique solution 
of the following equation,
\[(Z^{(1)}_{t},...,Z^{(d)}_{t})={\rm r} + \left(\sum\limits_{j=1}^{d} X^{1,j}_{\int\limits_{0}^{t}Z^{(j)}_{s}ds},...,
\sum\limits_{j=1}^{d}X^{d,j}_{\int\limits_{0}^{t}Z^{(j)}_{s}ds}\right), \;\; t\geq 0 ,\] 
where ${\rm X}^{(j)}= {}^{t}(X^{1,j},\dots, X^{d,j})$, $j\in[d]$ are $d$ independent Lévy processes such that for 
all $j\in[d]$, $X^{j,j}$ is a spectrally positive Lévy process, that is with no negative jumps, and for all 
$i\neq j$, $X^{i,j}$ is a subordinator (here ${}^t{\rm u}$ means the transpose of 
the vector ${\rm u}\in\mathbb{R}^d$). The right hand side of the above equation can be re-written as 
${\rm r}+\mathbf{X}\left(\int\limits_{0}^{t} {\rm Z}_{s}ds\right)$, where $\{\mathbf{X}_{\rm t}, 
{\rm t}\in\mathbb{R}^{d}_{+}\}$ is the spectrally positive additive Lévy field (spaLf) defined as the sum of the 
$d$ independent Lévy processes ${\rm X}^{(j)}$, that is 
\[\mathbf{X}_{\rm t}:= {\rm X}^{(1)}_{t_{1}}+\dots +{\rm X}^{(d)}_{t_{d}}, \;\; {\rm t}\in\mathbb{R}^{d}_{+}\,.\ \]
Note that spaLf's have been defined and studied in a previous paper under the aspect of fluctuation theory, 
see \cite{cma1}. When $d=1$, this result is due to Lamperti who proved, in the $1970$'s, that any time continuous 
branching process can be represented as a spectrally positive Lévy process time changed by the inverse of some 
integral functional. Note that when $d=1$, the Lamperti representation is constructive, that is the branching 
process can be made explicit from the leading L\'evy process, whereas this is not the case in higher dimension. 

Our main result asserts that if the Laplace exponent $\tilde{\varphi}_i$ of each diagonal L\'evy process 
$X^{i,i}$ in the above Lamperti representation satisfies Grey's condition, that is if
\begin{equation}\label{9340}
\int^\infty\frac{ds}{\tilde{\varphi}_i(s)}<\infty,\;\;\;\mbox{for all $i\in[d]$}\,,
\end{equation}
then extinction can only occur at a finite time. If (\ref{9340}) is not satisfied 
for some $i\in[d]$, then ${\rm Z}$ can only be extinguished at infinity. Moreover, when starting from 
${\rm r}\in\mathbb{R}_+^d$, the probability that ${\rm Z}_t$ tends to $\bf 0$, 
when $t$ tends to $\infty$ is $e^{-\langle {\rm r},\phi(\mathbf{0})\rangle}$,
where $\phi$ is the inverse of the Laplace exponent of the spaLf ${\bf X}$. In particular, ${\rm Z}$ becomes 
extinct almost surely if and only if it is critical or sub-critical.

This result indicates in particular that the off-diagonal subordinators $X^{i,j}$, $i\neq j$ have no influence 
on the nature of the extinction. It may appear rather counter-intuitive. However, thinking of the neutral case  
helps us to understand this phenomenon. Indeed, in the neutral case, that is when the law of $\sum_{i=1}^dX^{i,j}$
does not depend on $j$, the MCSBP ${\rm Z}$ behaves like a single type continuous state branching process. 
More specifically, $Z^{(1)}+\dots+Z^{(d)}$ is a continuous state branching process  whose branching mechanism is
$d\hat{\varphi}$, where $\hat{\varphi}$ is the Laplace exponent  of $\sum_{i=1}^dX^{i,j}$. But provided 
the $X^{i,i}$'s are not subordinators, which is excluded here, $\int^\infty\frac{ds}{\hat{\varphi}(s)}<\infty$ 
if and only if (\ref{9340}) holds for each $i$, see the discussion after Theorem \ref{4588}. Therefore, from 
Grey's condition (\ref{9930}), ${\rm Z}$ becomes extinct at a finite time with positive 
probability if and only if (\ref{9340}) is statisfied. 

The next section is devoted to some reminders and preliminary results about MCSBP's and spaLf's. Then in Section 
\ref{1300} we state our main results and in Sections \ref{proofs.ext}, \ref{proofs.Grey.M} and the Appendix we 
give proofs of these results. 

\section{Preliminary results on MCSBP's and spaLf's}\label{Reminders}

We use the notation $\mathbb{R}_+=[0,\infty)$ and $[d]=\{1,\dots,d\}$, where $d\ge1$ is an integer. 
Vectors of $\mathbb{R}^d$ will be written in roman characters and their coordinates in italic, as 
follows: ${\rm x}=(x_1,\dots,x_d)$ and ${}^t{\rm x}={}^t(x_1,\dots,x_d)$ will denote the transpose of ${\rm x}$. 
The $i$-th unit vector of $\mathbb{R}_+^d$ will be denoted ${\rm e}_i=(0,\dots,0,1,0,\dots,0)$. We set 
${\bf 0}=(0,0,\dots,0)$ and ${\bf 1}=(1,1,\dots,1)$ respectively for the zero vector of $\mathbb{R}_+^d$ 
and the  vector of $\mathbb{R}_+^d$ whose all coordinates are equal to 1. For ${\rm s}=(s_1,\dots,s_d)$ and 
${\rm t}=(t_1,\dots,t_d)\in\mathbb{R}_+^d$, we write ${\rm s}\le{\rm t}$ if $s_i\le t_i$ for all $i\in[d]$ and 
we write ${\rm s}<{\rm t}$ if ${\rm s}\le {\rm t}$ and there exists $i\in[d]$ such that $s_i<t_i$.  We will 
denote by $\langle {\rm x},{\rm y}\rangle$, ${\rm x},{\rm y}\in\mathbb{R}^d$ the usual inner product on 
$\mathbb{R}^d$ and by $|{\rm x}|$ the Euclidian norm of ${\rm x}$.\\ 

\subsection{Basics on MCSBP's}\label{9522} Let $\delta$ be an element external to $\mathbb{R}_+^d$ and 
$R=\mathbb{R}_+^d\cup\{\delta\}$ be the one point Alexandrov compactification of $\mathbb{R}_+^d$. An $R$-valued 
strong Markov process 
${\rm Z}=\{({\rm Z}_t)_{t\ge0},(\mathbb{P}_{{\rm r}})_{{\rm r}\in\mathbb{R}^{d}_{+}}\}$ with $\delta$ as a trap 
is called a multitype (or a $d$-type) continuous state branching process (MCSBP) if it satisfies,
\begin{equation}\label{8422}
\mathbb{E}_{{\rm r}_{1}+{\rm r}_{2}}[e^{-\langle \lambda,{\rm Z}_{t}\rangle}] = 
\mathbb{E}_{{\rm r}_{1}}[e^{-\langle \lambda,{\rm Z}_{t}\rangle}]
\mathbb{E}_{{\rm r}_{2}}[e^{-\langle \lambda,{\rm Z}_{t}\rangle}], \; t\geq 0, \;
{\rm r}_{1},{\rm r}_{2},\lambda\in\mathbb{R}^{d}_{+}\,,
\end{equation}
where by convention $e^{-\langle \lambda,\delta\rangle}=0$, for all $\lambda\in\mathbb{R}^{d}_{+}$. 
The property (\ref{8422}) is called the {\it branching property} of ${\rm Z}$. We recall that $\mathbf{0}$ is 
an absorbing state for ${\rm Z}$ and that it is the only absorbing state other than $\delta$. 
The branching property of ${\rm Z}$ implies directly the existence of a mapping 
$(t,\lambda)\mapsto{\rm u}_t(\lambda)=(u^{(1)}_t(\lambda),\dots,u^{(d)}_t(\lambda))$ satisfying 
\begin{equation}\label{expLaplaceZ}
\mathbb{E}_{\rm r}[e^{-\langle \lambda,{\rm Z}_{t}\rangle}]
	= e^{-\langle {\rm r},{\rm u}_{t}(\lambda)\rangle},\;\; t\geq 0,\;\lambda,{\rm r}\in\mathbb{R}^{d}_{+}.
\end{equation}
From the  Markov property of ${\rm Z}$, we derive the {\it semigroup property} of ${\rm u}_t(\lambda)$,
\[{\rm u}_{t+s}(\lambda)={\rm u}_t({\rm u}_s(\lambda)),\;\;\;\mbox{for all $s,t\ge0$ and 
$\lambda\in\mathbb{R}^d_+$.}\]
Moreover for each fixed $t$, ${\rm u}_t(\lambda)$ is differentiable 
in $\lambda$. It was first proved in \cite{wa}, see also Proposition 6.1 in \cite{dfs}, that for each 
$\lambda\in\mathbb{R}_+^d$, $t\mapsto{\rm u}_t(\lambda)$ is differentiable in $t$ and is the unique solution 
to the following differential system,
\[(S_{\varphi}) \qquad\qquad\;
 \left\lbrace
	\begin{array}{ll}
		\dfrac{\partial}{\partial t} u^{(i)}_{t}(\lambda) = -\varphi_{i} ({\rm u}_{t}(\lambda))
		\\[6pt] u^{(i)}_{0}(\lambda) =\lambda_{i}
	\end{array}
	\right., 
	\;\;\; i\in[d],\, t\geq 0,\]
where each $\varphi_i$ is the Laplace exponent of a possibly killed $d$-dimensional L\'evy process 
${\rm X}^{(i)}={}^t(X^{1,i},\dots,X^{d,i})$. The coordinate $X^{i,i}$ is a (possibly killed) spectrally positive 
Lévy process and $X^{i,j}$, $j\neq i$ are (possibly killed) subordinators. More specifically, we define a 
probability measure $\p$ under which the processes ${\rm X}^{(i)}$, $i\in[d]$ are $d$ independent possibly killed 
$d$-dimensional L\'evy processes. Then for each $i\in[d]$, $\varphi_i$ is defined by 
\[\e[e^{-\langle \lambda, {\rm X}^{(i)}_t\rangle}]=e^{t\varphi_i(\lambda)}\,,\;\;\;t\ge0\,,\;\;\;{\bf\lambda}=
(\lambda_1,\dots,\lambda_d)\in \mathbb{R}_+^d\,.\]
Its L\'evy-Khintchine decomposition is written as,
\begin{equation}\label{2552}
\varphi_i(\lambda)=-\alpha_i-\sum_{j=1}^d a_{j,i}\lambda_j+\frac12q_i\lambda_i^2-
\int_{\mathbb{R}_+^d}(1-e^{-\langle\lambda, {\rm x}\rangle}-\langle\lambda, {\rm x}\rangle1_{\{|{\rm x}|<1\}})\,
\pi_i(d{\rm x})\,, \;\;\lambda\in\mathbb{R}^{d}_{+},
\end{equation}
where $\alpha_i\ge0$, $(a_{j,i})_{i,j\in[d]}$ is an essentially nonnegative matrix i.e.~$a_{j,i}\ge0$ for $i\neq j$, 
$q_i\ge0$ and $\pi_i$ is a measure on $\mathbb{R}_+^d$ such that $\pi_i(\{\mathbf{0}\})=0$ and
\[\int_{\mathbb{R}_+^d}\left[(1\wedge |{\rm x}|^2)+\sum\limits_{j\neq i}(1\wedge x_j)\right]\pi_j(d{\rm x})<\infty\,.\]
The mapping $\varphi=(\varphi_{1},\dots,\varphi_{d})$ is called the \emph{branching mechanism} of the MCSBP ${\rm Z}$.
We emphasize that for each $i\in[d]$, $\varphi_i$ is a convex function. Moreover, for all $j\neq i$ and 
$\lambda_1,\dots,\lambda_{j-1}$, $\lambda_{j+1},\dots,\lambda_d$ the function $\lambda_j\mapsto \varphi_i(\lambda)$ 
is non increasing and whenever $X^{i,i}$ is not a subordinator, the function $\lambda_i\mapsto \varphi_i(\lambda)$
tends to $\infty$ as $\lambda_i\rightarrow\infty$. Conversely, still according to \cite{wa}, for each mapping such 
as $\varphi$ in (\ref{2552}) there exists a unique (in law) MCSBP 
${\rm Z}=\{({\rm Z}_t)_{t\ge0},(\mathbb{P}_{{\rm r}})_{{\rm r}\in\mathbb{R}^{d}_{+}}\}$ 
with branching mechanism $\varphi$. Note that MCSBP's belong to a more general class of processes called affine 
processes. Existence and uniqueness of the solution of equation $(S_\varphi)$ is proved in this general setting 
in \cite{dfs}.\\

A MCSBP ${\rm Z}$ is said to be \emph{conservative} if for all ${\rm r}\in\mathbb{R}_+^d$ and $t>0$, 
$\p_{\rm r}({\rm Z}_t\in\mathbb{R}_+^d)=1$. From (\ref{expLaplaceZ}) this can be expressed in terms of the 
Laplace exponent of ${\rm Z}$ as $e^{\langle {\rm r},{\rm u}_t(\mathbf{0})\rangle}=1$, for all 
${\rm r}\in\mathbb{R}_+^d$ and $t>0$, so that ${\rm Z}$ is conservative if and only if 
\begin{equation}\label{2899}
u_t^{(i)}(\mathbf{0})=0,\;\;\mbox{for all $t>0$ and $i\in[d]$.}
\end{equation}
When $d=1$, it is proved in \cite{gr} that ${\rm Z}$ is conservative if and only if its mechanism $\varphi$ 
satisfies
\[\int_{0+}\dfrac{d\alpha}{|\varphi(\alpha)|}=\infty\,.\]
When $d>1$, finding necessary and sufficient conditions on the branching mechanism for ${\rm Z}$ to 
be conservative is an open question which is not our purpose here. Note however that sufficient conditions 
have been given in Lemma 9.2 of \cite{dfs}. It is also possible to construct examples from the neutral case
defined in the introduction, that is when the law of $\sum_{i=1}^dX^{i,j}$ does not depend on $j$, since 
in this case, the branching mechanism of ${\rm Z}$ is $d\hat{\varphi}$, where $\hat{\varphi}$ is the 
Laplace exponent of $\sum_{i=1}^dX^{i,j}$. Let us also note that if for some $i\in[d]$, 
$u^{(i)}_t(\mathbf{0})=0$, for all $t\ge0$, then from $(S_{\varphi})$, for all $t\ge0$, 
\begin{eqnarray*}
\frac{\partial}{\partial t}u_t^{(i)}(\mathbf{0})&=&-\varphi_i(u_t^{(1)}(\mathbf{0}),\dots,
u_t^{(i-1)}(\mathbf{0}),0,u_t^{(i+1)}(\mathbf{0}),
\dots,u_t^{(d)}(\mathbf{0}))\\
&=&0.
\end{eqnarray*}
But $\varphi_i$ is non increasing on all coordinates $j\neq i$, so that
\[-\varphi_i(u_t^{(1)}(\mathbf{0}),\dots,u_t^{(i-1)}(\mathbf{0}),0,u_t^{(i+1)}(\mathbf{0}),\dots,
u_t^{(d)}(\mathbf{0}))\ge\alpha_i\] 
and hence $\alpha_i=0$, see also Proposition 9.1 in \cite{dfs}. Therefore, if ${\rm Z}$ is conservative, then 
for all $i\in[d]$, $\alpha_i=0$. We will assume throughout this paper that ${\rm Z}$ is conservative.\\

There also exists a pathwise connection between MCSBP's and their branching mechanism which will be of great 
use for our results. For $d=1$, this has been known for a long time as the Lamperti representation, see 
\cite{la} and \cite{club}. This representation has 
recently been extended to the multitype case in \cite{gt} and \cite{cpgub}, see also \cite{ch} for discrete 
valued processes. More specifically, let ${\rm r}\in\mathbb{R}_+^d$ and ${\rm X}^{(i)}$, $i\in[d]$ be $d$ 
independent L\'evy processes whose Laplace exponents have the form (\ref{2552}). Then Theorem 1 in 
\cite{cpgub} asserts that there exists a unique solution to the equation,
\begin{equation}\label{Lamperti}
Z^{(i)}_{t}
	=r_{i} + \sum\limits_{j=1}^{d} X^{i,j}\left(\int\limits_{0}^{t}Z^{(j)}_{s}ds\right), \;\; t\geq 0,\;\;i\in[d].
\end{equation}
Moreover, if $\p_{\rm r}$ denotes the law of the solution ${\rm Z}_t=(Z^{(1)}_{t},\dots,Z^{(d)}_{t})$, $t\ge0$, 
then ${\rm Z}=\{({\rm Z}_t)_{t\ge0},(\mathbb{P}_{{\rm r}})_{{\rm r}\in\mathbb{R}^{d}_{+}}\}$ is a MCSBP with 
branching mechanism $\varphi$. Conversely, every MCSBP can be obtained from this construction. Note that the 
paper \cite{cpgub} actually treats the more general setting of affine processes.

\subsection{Some reminders on spaLf's}\label{8225} 
Whether characterized analytically through the differential system 
$(S_\varphi)$ or path by path through 
equations $(\ref{Lamperti})$, MCSBP's require a good knowledge of the underlying L\'evy processes 
${\rm X}^{(j)}$, $j\in[d]$ in order to be properly studied. 
Note that in equation (\ref{Lamperti}), these 
L\'evy processes live in different time scales and that the leading process is actually the multivariate 
stochastic field
\[\mathbf{X}_{\rm t}:={\rm X}_{t_1}^{(1)}+\dots+{\rm X}^{(d)}_{t_d}=\left(\sum_{j=1}^dX^{i,j}_{t_j}\right)_{i\in[d]}
\,,\;\;\;\mbox{for}\;\;\;{\rm t}=(t_1,\dots,t_d)\in\mathbb{R}_+^d\,.\] 
This process is called a \emph{spectrally positive additive L\'evy field} (spaLf) and has been studied in 
\cite{cma1} from the aspect of fluctuation theory. The mapping $\varphi$ defined above is actually the Laplace 
exponent of this spaLf, that is
\[\e[e^{-\langle\lambda, \mathbf{X}_{\rm t}\rangle}]=e^{\langle {\rm t},\varphi(\lambda)\rangle}\,,\;\;\;{\rm t},
\lambda\in \mathbb{R}_+^d\,.\]
Similarly to the case $d=1$, the construction of the solution of (\ref{Lamperti}) only requires the paths of 
$\mathbf{X}$ up to its first hitting time at level $-{\rm r}$. This notion of first hitting times for spaLf's 
has been defined in \cite{cma1} as the smallest solution ${\rm s}=(s_1,\dots,s_d)$ to the system 
\begin{equation}\label{1886}
\sum_{j=1}^dX^{i,j}_{s_j}=-r_i,\;\;\;i\in[d]_{\rm s}\,,
\end{equation}
where 'smallest' is to be understood in the usual partial order of $\mathbb{R}^d$ 
and $[d]_{\rm s}=\{i\in[d]:s_i<\infty\}$, see the appendix. We will denote by 
$\mathbf{T}_{\rm r}=(T_{\rm r}^{(1)},\dots,T_{\rm r}^{(d)})$ this solution and we will refer to it as 
\emph{the $($multivariate$)$ first hitting time of level $-{\rm r}$ by the spaLf
$\mathbf{X}=\{\mathbf{X}_{\rm t}, {\rm t}\in\mathbb{R}^{d}_{+}\}$}. In a purely formal way we may set,
\begin{equation}\label{2678}
\mathbf{T}_{\rm r}=\inf\{{\rm t}:\mathbf{X}_{{\rm t}}=-{\rm r}\}\,.
\end{equation}
Note that from \cite{cma1}, $\p(\mathbf{T}_{\rm r}\in\mathbb{R}_+^d)+\p(T_{\rm r}^{(i)}=\infty,i\in[d])=1$, 
that is, with probability one, either all coordinates of $\mathbf{T}_{\rm r}$ are finite or all of 
them are infinite.\\ 

Let us now state two important hypothesis which will be in force throughout this paper. The first one is,
\[(H_{\varphi}) \qquad\qquad\; D_{\varphi} =\{ \lambda\in\mathbb{R}^{d}_{+} : 
\varphi_{j}(\lambda)>0, j\in[d]\} \;\text{is not empty.}\]
Assuming $(H_{\varphi})$ simply allows us to exclude the spaLf's which do not hit any level, almost surely, 
see Theorem \ref{propTr} below. 
When $d=1$ this boils down to exclude subordinators. Our second assumption regards the mean matrix  
$M =\left(m_{ij}\right)_{i,j\in[d]}$, where $m_{ij}:=\e(X_1^{i,j})=-\lim\limits_{\lambda\rightarrow0}
\dfrac{\partial}{\partial \lambda_{i}}\varphi_{j}(\lambda)$. We say that $M$ is {\it irreducible} if 
for all $i,j\in[d]$, there exist $n\geq 1$ and some indices $i=i_{1},\dots, i_{n}=j$ such that for all 
$k\in [n-1]$, $m_{i_{k}i_{k+1}}\neq 0$. Note that for all $i,j\in[d]$, $m_{ij}\in(-\infty,\infty]$.
A MCSBP ${\rm Z}$ is said to be irreducible if the mean matrix $M$ of the underlying spaLf ${\bf X}$ in 
the Lamperti representation is irreducible. We  will also sometimes say that the spaLf ${\bf X}$ is 
irreducible. When ${\bf X}$ is integrable, that is when $m_{ij}<\infty$, for all $i,j\in[d]$, 
Perron-Frobenius theory asserts that $M$ has a real eigenvalue $\rho$ with multiplicity equal to 1 and 
such that the real part of any other eigenvalue is less than $\rho$. In this case the MCSBP ${\rm Z}$ 
will be said that it is subcritical, critical or super critical according as $\rho<0$, $\rho=0$ or 
$\rho>0$. 

The following result is proved in \cite{cma1}, see Proposition 3.1, Theorems 3.3 and 3.4 therein.

\begin{theorem}\label{propTr}
Let $\mathbf{X}$ be a spaLf such that $\alpha_i=0$, for all $i\in[d]$.  For ${\rm r}\in\mathbb{R}_+^d$, 
let $\mathbf{T}_{\rm r}$ be its first hitting time of level $-{\rm r}$ as defined above. Then,
\begin{itemize}
\item[$1.$] $\mathbb{P}(\mathbf{T}_{\rm r}\in\mathbb{R}^{d}_{+})>0$ for some $($and hence for all$)$ 
${\rm r}\in\mathbb{R}^{d}_{+}$ if and only if $(H_{\varphi})$ holds.
\item[$2.$] If $(H_{\varphi})$ holds then there is a mapping 
$\phi=(\phi_1,\dots,\phi_d):\mathbb{R}_+^d\rightarrow[0,\infty)^d$ such that 
\begin{equation}\label{5147}
\e[e^{-\langle\lambda,\mathbf{T}_{\rm r}\rangle}]=
e^{-\langle {\rm r},\phi(\lambda)\rangle},\;\;\; \lambda\in\mathbb{R}_+^d\,.
\end{equation}
Moreover, for all $\lambda\in(0,\infty)^d$, $\phi(\lambda)\in D_{\varphi}$ and the mapping 
$\phi:(0,\infty)^{d}\rightarrow D_{\varphi}$ is a diffeormorphism whose inverse is
$\varphi:D_{\varphi}\rightarrow (0,\infty)^{d}$, that is 
\[\varphi(\phi(\lambda))=\lambda,\;\;\;\lambda\in(0,\infty)^d.\]
\item[$3.$] If ${\bf X}$ is irreducible and if $(H_{\varphi})$ holds, then the values $\mathbf{0}$ and 
$\phi(\mathbf{0})$ are the only solutions of the equation $\varphi(\lambda)=\mathbf{0}$, 
$\lambda\in\mathbb{R}_+^d$. 
Moreover, either $\phi(\mathbf{0})=\mathbf{0}$ or $\phi(\mathbf{0})\in(0,\infty)^{d}$.
\end{itemize}
\end{theorem}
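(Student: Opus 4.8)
The plan is to combine a multivariate exponential martingale for the spaLf $\mathbf{X}$ with an optional-stopping/regeneration analysis of the hitting time $\mathbf{T}_{\rm r}$, then feed the identity $\phi\circ\varphi=\mathrm{id}$ that comes out into soft analytic arguments (invariance of domain, the inverse function theorem, a connectedness argument) for the diffeomorphism claim, and finally treat item $3$ by Perron--Frobenius-type bookkeeping. The first step is to exploit the \emph{regeneration structure} of $\mathbf{T}_{\rm r}$. Since $\mathbf{X}$ has independent increments in the multiparameter sense and its only downward moves come from the spectrally positive diagonal coordinates $X^{i,i}$ (the off-diagonal ones being subordinators), on $\{\mathbf{T}_{\rm r}\in\mathbb{R}_+^d\}$ one has $\mathbf{X}_{\mathbf{T}_{\rm r}}=-{\rm r}$ exactly, with no overshoot, and restarting the field at $\mathbf{T}_{\rm r}$ yields $\mathbf{T}_{{\rm r}_1+{\rm r}_2}\ed\mathbf{T}_{{\rm r}_1}+\widetilde{\mathbf{T}}_{{\rm r}_2}$ with $\widetilde{\mathbf{T}}_{{\rm r}_2}$ an independent copy of $\mathbf{T}_{{\rm r}_2}$ (equivalently, this multiplicativity can be read off the branching property of the associated MCSBP through the Lamperti representation). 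Hence $f(\lambda,{\rm r}):=\e[e^{-\langle\lambda,\mathbf{T}_{\rm r}\rangle}]$, with the convention that the integrand is $0$ on $\{\mathbf{T}_{\rm r}\notin\mathbb{R}_+^d\}$, is multiplicative in ${\rm r}$; being also coordinatewise non-increasing and measurable in ${\rm r}$, its logarithm is $\mathbb{R}$-linear, so $f(\lambda,{\rm r})=e^{-\langle{\rm r},\phi(\lambda)\rangle}$ for some $\phi:\mathbb{R}_+^d\to[0,\infty]^d$, which is (\ref{5147}). Taking $\lambda=\mathbf 0$ gives $\p(\mathbf{T}_{\rm r}\in\mathbb{R}_+^d)=e^{-\langle{\rm r},\phi(\mathbf 0)\rangle}$, so item $1$ reduces to locating $\phi(\mathbf 0)$.

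For item $2$, fix $\mu\in D_\varphi$. Because $\e[e^{-\langle\mu,\mathbf{X}_{\rm t}\rangle}]=e^{\langle{\rm t},\varphi(\mu)\rangle}$, the process $N_{\rm t}:=\exp(-\langle\mu,\mathbf{X}_{\rm t}\rangle-\langle{\rm t},\varphi(\mu)\rangle)$ is a unit-mean martingale for the natural multiparameter filtration. Applying the optional stopping theory for spaLf's of \cite{cma1} at $\mathbf{T}_{\rm r}$, using $\mathbf{X}_{\mathbf{T}_{\rm r}}=-{\rm r}$ on $\{\mathbf{T}_{\rm r}\in\mathbb{R}_+^d\}$ and $\varphi(\mu)>\mathbf 0$, should give $\e[e^{-\langle\mathbf{T}_{\rm r},\varphi(\mu)\rangle}]=e^{-\langle\mu,{\rm r}\rangle}$. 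Comparing with the previous paragraph forces $\langle{\rm r},\phi(\varphi(\mu))\rangle=\langle{\rm r},\mu\rangle$ for all ${\rm r}\in\mathbb{R}_+^d$, i.e.\ $\phi\circ\varphi=\mathrm{id}$ on $D_\varphi$; in particular $\varphi$ is injective on $D_\varphi$, its image lies in $(0,\infty)^d$, and $\varphi\circ\phi=\mathrm{id}$ on $\varphi(D_\varphi)$. To upgrade this to the diffeomorphism statement I would show $\varphi(D_\varphi)=(0,\infty)^d$: the set $D_\varphi$ is open and $\varphi$ is real-analytic on it, so $\varphi(D_\varphi)$ is open by invariance of domain; it is also closed in $(0,\infty)^d$, for if $\varphi(\lambda_n)\to{\rm y}\in(0,\infty)^d$ then $(\lambda_n)$ stays in a compact subset of $D_\varphi$ --- it is bounded because each $\varphi_i$ is convex with $\varphi_i(\lambda)\to\infty$ as $\lambda_i\to\infty$ (a property forced by $(H_\varphi)$ and $\alpha_i=0$, which prevent $X^{i,i}$ from being a subordinator), and it stays away from the part of $\partial D_\varphi$ where some $\varphi_j$ vanishes, so any limit point $\lambda$ lies in $D_\varphi$ and satisfies $\varphi(\lambda)={\rm y}$. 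Connectedness of $(0,\infty)^d$ then gives $\varphi(D_\varphi)=(0,\infty)^d$, and in particular $(H_\varphi)\Leftrightarrow D_\varphi\neq\emptyset\Leftrightarrow\p(\mathbf{T}_{\rm r}\in\mathbb{R}_+^d)>0$, which is item $1$. Differentiating $\phi\circ\varphi=\mathrm{id}$ shows $D\varphi$ is invertible throughout $D_\varphi$, so $\varphi$ is a diffeomorphism from $D_\varphi$ onto $(0,\infty)^d$ with inverse $\phi$.

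For item $3$, assume $\mathbf{X}$ irreducible. Any $\lambda$ with $\varphi(\lambda)=\mathbf 0$ lies in $\partial D_\varphi$, since $\varphi>\mathbf 0$ on $D_\varphi$, and $\lambda=\mathbf 0$ is such a solution because $\alpha_i=0$. Letting $\lambda$ decrease to $\mathbf 0$ along $(0,\infty)^d$, monotonicity of $\phi$ produces a limit $\phi(\mathbf 0)$, which by continuity and $\varphi\circ\phi=\mathrm{id}$ satisfies $\varphi(\phi(\mathbf 0))=\mathbf 0$; moreover $\phi(\mathbf 0)$ dominates every solution of $\varphi(\lambda)=\mathbf 0$ --- this follows from the optional-stopping inequality applied to a candidate solution in place of $\mu$ --- so it is the largest one. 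Irreducibility then rules out any solution other than $\mathbf 0$ and $\phi(\mathbf 0)$, and also any solution with both a zero and a positive coordinate: following an irreducible chain of indices and using that $\varphi_j$ is strictly decreasing in $\lambda_i$ whenever $m_{ij}>0$ would make some $\varphi_i(\lambda)$ strictly negative. Hence $\mathbf 0$ and $\phi(\mathbf 0)$ are the only solutions, and $\phi(\mathbf 0)\in\{\mathbf 0\}\cup(0,\infty)^d$.

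The step I expect to be the main obstacle is the optional stopping at the \emph{multivariate} hitting time $\mathbf{T}_{\rm r}$: one must know that $\mathbf{T}_{\rm r}$ is a genuine stopping point, that $\mathbf{X}$ reaches $-{\rm r}$ exactly there in every coordinate with no overshoot, and that $N$ is uniformly integrable up to $\mathbf{T}_{\rm r}$ --- in contrast to the one-parameter case, $\langle\mu,\mathbf{X}_{\rm t}\rangle$ need not stay above $-\langle\mu,{\rm r}\rangle$ before $\mathbf{T}_{\rm r}$, so a naive boundedness argument fails and one has to invoke the fluctuation theory for spaLf's developed in \cite{cma1} and the facts recalled in the appendix. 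The secondary difficulty is the coercivity/closedness argument behind $\varphi(D_\varphi)=(0,\infty)^d$; the remaining ingredients (invariance of domain, the inverse function theorem, and the irreducibility bookkeeping) are routine.
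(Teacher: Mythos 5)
This paper does not prove Theorem~\ref{propTr}: it is imported verbatim from Proposition~3.1 and Theorems~3.3 and~3.4 of~\cite{cma1}, so there is no in-house proof to compare your sketch against. Your outline does assemble the ingredients one would expect to find in that reference --- the no-overshoot identity $\mathbf{X}_{\mathbf{T}_{\rm r}}=-{\rm r}$ on $\{\mathbf{T}_{\rm r}\in\mathbb{R}_+^d\}$, the exponential martingale and optional stopping at the multiparameter stopping point to force $\phi\circ\varphi=\mathrm{id}$ on $D_\varphi$, and the topological upgrade (invariance of domain, closedness, connectedness, inverse function theorem) to the diffeomorphism statement --- and you correctly identify the optional stopping at $\mathbf{T}_{\rm r}$ as the genuine technical obstacle that the fluctuation theory of~\cite{cma1} is built to supply, so I will not treat that flagged step as a gap.

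There is, however, one step that as written does not close and is not merely an omitted citation: the coercivity in your closedness argument. You bound $(\lambda_n)$ by invoking ``each $\varphi_i$ is convex with $\varphi_i(\lambda)\to\infty$ as $\lambda_i\to\infty$,'' but that limit holds only with the other coordinates frozen; since $\varphi_i$ is \emph{decreasing} in every $\lambda_j$, $j\ne i$, convexity alone does not prevent $\varphi_i(\lambda_n)$ from staying bounded while several coordinates escape together, which is exactly what happens for $d\ge2$. The correct mechanism is quantitative. Pick $j$ maximizing $\lambda_n^{(j)}$ along the subsequence and use Lemma~\ref{7256}: $\varphi_j(\lambda_n)\ge\sum_i\varphi_{ij}(\lambda_n^{(i)})=\tilde\varphi_j(\lambda_n^{(j)})+\sum_{i\ne j}\varphi_{ij}(\lambda_n^{(i)})$, where $\tilde\varphi_j$ grows superlinearly because $X^{j,j}$ is not a subordinator (this is where $(H_\varphi)$ and $\alpha_i=0$ enter), while each off-diagonal $\varphi_{ij}$ is a subordinator exponent and hence satisfies $|\varphi_{ij}(s)|\le Cs+C'$; since $\lambda_n^{(i)}\le\lambda_n^{(j)}$ this forces $\varphi_j(\lambda_n)\to\infty$ if $\lambda_n^{(j)}\to\infty$, contradicting convergence to ${\rm y}$. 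This is fixable, but the convexity shortcut you wrote is false as stated in dimension $d\ge2$ and needs to be replaced by the superlinear-vs.-linear comparison. A smaller remark on item~$3$: the argument is cleaner if phrased as ``$\varphi_k(\lambda)\le\tilde\varphi_k(\lambda_k)=0$ with equality forcing $\lambda_j=0$ for every $j$ with $m_{jk}>0$,'' then propagated backwards along an irreducible chain, rather than the ``strictly negative'' phrasing you give, which as written is not quite the right deduction.
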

\noindent 
An immediate consequence of Theorem \ref{propTr} is the following expression of the probability for the 
first hitting time to be finite on each coordinate, 
\begin{equation}\label{2679}
\mathbb{P}(\mathbf{T}_{\rm r}\in\mathbb{R}^{d}_{+})= e^{-\langle {\rm r},\phi(\mathbf{0})\rangle}\,.
\end{equation}
When $d=1$ we derive directly from the Lamperti representation (\ref{Lamperti}) the almost sure 
equality $\mathbf{T}_{\rm r}=\int_0^\infty {\rm Z}_s\,ds$, on the set $\{{\rm Z}_t\in\mathbb{R}_{+},\,t\ge0\}$. 
The general case is much more delicate 
to deal with. Indeed, the underlying spaLf being multi-indexed, it may reach the level $-{\rm r}$ through
infinitely many different paths. However, the representation of $\mathbf{T}_{\rm r}$ for $d\ge1$ is as 
expected and the following result will be proved in the Appendix. 

\begin{proposition}\label{8260}
Let ${\rm Z}$ be a MCSBP and let ${\bf X}$ be the underlying spaLf in the Lamperti representation 
$(\ref{Lamperti})$. Then for all $i\in[d]$, and ${\rm r}\in\mathbb{R}_+^d$,
\[T_{\rm r}^{(i)}=\int_0^\infty Z_s^{(i)}\,ds\,,\;\;\mbox{$\p_{\rm r}$-a.s.~on the set 
$\{{\rm Z}_t\in\mathbb{R}^d_{+},\,t\ge0\}$}\,.\]
\end{proposition}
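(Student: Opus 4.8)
The plan is to identify the two processes $T_{\rm r}^{(i)}$ and $\int_0^\infty Z_s^{(i)}\,ds$ by showing that the time-changed path $({\rm Z}_s)_{s\ge0}$ produces, through the integral functionals ${\rm C}_t=(\int_0^t Z_s^{(1)}ds,\dots,\int_0^tZ_s^{(d)}ds)$, exactly a monotone exhaustion of $\mathbb{R}_+^d$ up to the multivariate first hitting time $\mathbf{T}_{\rm r}$ of the spaLf ${\bf X}$. First I would work on the event $\{{\rm Z}_t\in\mathbb{R}^d_+,\,t\ge0\}$ and set ${\rm C}_\infty=\lim_{t\to\infty}{\rm C}_t\in[0,\infty]^d$, which exists by monotonicity of each coordinate. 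From the Lamperti equation $(\ref{Lamperti})$, $Z^{(i)}_t=r_i+\sum_{j=1}^dX^{i,j}({\rm C}_t^{(j)})$, so letting $t\to\infty$ on coordinates where ${\rm C}_\infty$ is finite, and using that a Lévy path has a finite limit along a bounded time only if evaluated at a finite argument, one gets $\sum_{j}X^{i,j}({\rm C}_\infty^{(j)})=-r_i+\lim_t Z^{(i)}_t$ for those $i$. The key point is to rule out that ${\rm Z}_t$ converges to a strictly positive limit or that ${\rm C}_\infty^{(i)}<\infty$ while $Z^{(i)}_t$ does not tend to $0$; this follows because if $\lim_t Z^{(i)}_t=\ell_i$ along the event considered, then $C^{(i)}_\infty=\infty$ whenever $\ell_i>0$, and on the complementary indices $\ell_i=0$, giving $\sum_j X^{i,j}({\rm C}_\infty^{(j)})=-r_i$ for all $i$ with ${\rm C}_\infty^{(i)}<\infty$. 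This exhibits ${\rm C}_\infty$ as \emph{a} solution of the hitting system $(\ref{1886})$.

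Next I would show ${\rm C}_\infty$ is in fact the \emph{smallest} solution, i.e.\ ${\rm C}_\infty=\mathbf{T}_{\rm r}$. One inclusion, $\mathbf{T}_{\rm r}\le{\rm C}_\infty$, is immediate once ${\rm C}_\infty$ is known to be a solution. For the reverse, I would use the construction of the Lamperti solution in \cite{cpgub}: the path of ${\rm Z}$ is built from ${\bf X}$ stopped at $\mathbf{T}_{\rm r}$, so that ${\rm C}_t\le\mathbf{T}_{\rm r}$ for all $t$ by construction (the time change never pushes any coordinate beyond the first hitting level), whence ${\rm C}_\infty\le\mathbf{T}_{\rm r}$. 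Combining the two inequalities gives ${\rm C}_\infty=\mathbf{T}_{\rm r}$ coordinatewise, which is exactly the claim since $T_{\rm r}^{(i)}=C^{(i)}_\infty=\int_0^\infty Z_s^{(i)}\,ds$. The subtlety here is that the partial order makes "smallest solution" genuinely stronger than "a solution reached in the limit", and one must appeal to the pathwise construction rather than just the defining equation to pin down the minimality; alternatively, one can invoke uniqueness of the solution to $(\ref{Lamperti})$ together with the fact that running ${\bf X}$ only up to $\mathbf{T}_{\rm r}$ already yields a full solution, so the integral functional cannot exceed $\mathbf{T}_{\rm r}$.

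I expect the main obstacle to be the bookkeeping on the indices where coordinates of ${\rm C}_\infty$ are infinite: one must check that these are precisely the indices $i\notin[d]_{\mathbf{T}_{\rm r}}$, and that on those coordinates the equation $\sum_j X^{i,j}({\rm C}_\infty^{(j)})=-r_i$ need not (and does not) hold, consistently with the definition of the hitting time restricted to $[d]_{\rm s}$. Handling this cleanly requires the dichotomy from \cite{cma1} recalled after $(\ref{2678})$ — on the event $\{{\rm Z}_t\in\mathbb{R}^d_+,\,t\ge0\}$ we are in the regime where $\mathbf{T}_{\rm r}\in\mathbb{R}_+^d$, i.e.\ all coordinates are finite, so actually $[d]_{\mathbf{T}_{\rm r}}=[d]$ and every ${\rm C}_\infty^{(i)}$ is finite, which removes the difficulty; I would make this reduction explicit early. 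The remaining routine verifications are the continuity and monotonicity of ${\rm C}_t$ and the elementary fact that a càdlàg Lévy path converging along the time change forces the time-change limit to be finite, both of which I would state without detailed computation.
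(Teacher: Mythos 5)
Your first step (showing ${\rm C}_\infty$ solves the hitting system, hence $\mathbf{T}_{\rm r}\le{\rm C}_\infty$ by minimality) matches the opening of the paper's argument, but the reverse inequality ${\rm C}_\infty\le\mathbf{T}_{\rm r}$ is where the real content of the proposition lies, and your proposed justification there has a genuine gap. You appeal to the construction in \cite{cpgub} as if the solution of $(\ref{Lamperti})$ were built explicitly from ${\bf X}$ killed at $\mathbf{T}_{\rm r}$, so that "${\rm C}_t\le\mathbf{T}_{\rm r}$ by construction." But the paper itself emphasizes (end of the introduction) that for $d>1$ the Lamperti representation is \emph{not} constructive: \cite{cpgub} gives existence and uniqueness, not an explicit stopped-path description. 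The assertion that the time change never overshoots $\mathbf{T}_{\rm r}$ is thus precisely what has to be proved, and invoking the construction to obtain it is circular. The same objection applies to your alternative phrasing ("running ${\bf X}$ only up to $\mathbf{T}_{\rm r}$ already yields a full solution"): verifying that running ${\bf X}$ on $[{\bf 0},\mathbf{T}_{\rm r}]$ suffices to produce the unique solution \emph{is} the reverse inequality.

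The paper's proof of this direction is the heart of Lemma~\ref{égalitéps(1.5)} and requires two ingredients that your proposal never invokes. First, the a.s.~\emph{continuity of the L\'evy paths at the hitting time} $\mathbf{T}_{\rm r}$, provided by Proposition~3.1 of \cite{cma1}: this is what turns the left-limit condition defining $\mathbf{T}_{\rm r}$ into an equality usable inside $(\ref{Lamperti})$, and without it the key inequality $(\ref{6240})$ is not available. Second, and more substantially, the genuine higher-dimensional subtlety is that the coordinates of ${\rm C}_t$ may reach the corresponding coordinates of $\mathbf{T}_{\rm r}$ at \emph{different} times. The paper handles this by taking the first time $t$ where some $a^{(i)}_t=T^{(i)}_{\rm r}$ while other coordinates lag behind, deducing from continuity and monotonicity of the off-diagonal components that $Z^{(i)}_t=0$ and that $x^{i,j}$ is constant on the remaining gap, and then using \emph{no spontaneous generation} to keep $Z^{(i)}$ absorbed until the other coordinates catch up. You mention "no spontaneous generation" only in passing without showing how it enters; yet it is exactly the mechanism that prevents overshooting. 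Finally, a smaller point: the event $\{{\rm Z}_t\in\mathbb{R}^d_+,\,t\ge0\}$ is the conservativeness event, \emph{not} the event $\{\mathbf{T}_{\rm r}\in\mathbb{R}^d_+\}$; a conservative supercritical path may have $\mathbf{T}_{\rm r}=(\infty,\dots,\infty)$, so your reduction to "all coordinates finite" does not hold, and the all-finite/all-infinite dichotomy must instead be derived from your step 1 together with the \cite{cma1} dichotomy for $\mathbf{T}_{\rm r}$.
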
  

Let us finally specify that since the results of this paper are only concerned with distributional properties 
of MCSBP's, when refereing to such a process ${\rm Z}$, there will be no ambiguity in mentioning the 
underlying spaLf ${\bf X}$ in the Lamperti representation (\ref{Lamperti}). Then $\p$ will be a reference 
probability measure under which ${\bf X}$ is a spaLf issued from 0 and ${\rm Z}$ is a MCSBP with family 
of probability measures $(\mathbb{P}_{{\rm r}})_{{\rm r}\in\mathbb{R}^{d}_{+}}$. 

\section{Main results}\label{1300}

We start by fixing some natural conditions for the study of extinction times. We have already mentioned that 
we assume conservativeness. When ${\rm Z}$ is not irreducible, several subsets of types (that is sub-vectors 
of ${\rm Z}$) can have an asymptotic behaviour that does not depend on other types. In such 
cases, the study of extinction of ${\rm Z}$ is reduced to the study of partial extinction of its irreducible 
classes. Therefore in order to study the extinction property, it is natural to assume that ${\rm Z}$ is irreducible. 
On the other hand, recall that from Theorem \ref{propTr} if $(H_\varphi)$ does not hold, then the 
underlying spaLf satisfies $\p({\bf T}_{\rm r}\in\mathbb{R}_+^d)=0$, for all ${\rm r}\in\mathbb{R}_+^d$
and we can derive from the Lamperti representation (\ref{Lamperti}), that
$\p_{\rm r}\left(\lim\limits_{t\rightarrow\infty}{\rm Z}_t=\mathbf{0}\right)=0$, that is, with probability one, 
extinction does not hold. Therefore, throughout the rest of this paper, 
we will assume that 
\[\mbox{\it ${\rm Z}$ is conservative, irreducible and $(H_\varphi)$ holds.}\]
In particular, these assumptions will not be recalled in the statements. Then, recall from Theorem \ref{propTr} 
that the branching mechanism $\varphi:D_{\varphi}\rightarrow (0,\infty)^{d}$ of ${\rm Z}$ admits an inverse denoted 
by $\phi$. 

We first state the expression of the probability of extinction in terms of the (inverse of) the branching 
mechanism.

\begin{theorem}\label{1005}
Let ${\rm Z}$ be a MCSBP, then for all ${\rm r}\in\mathbb{R}_+^d$, 
\begin{equation}\label{5127}
\p_{\rm r}\left(\lim\limits_{t\rightarrow\infty}{\rm Z}_t=\mathbf{0}\right)=
e^{-\langle {\rm r},\phi(\mathbf{0})\rangle}.
\end{equation}
\end{theorem}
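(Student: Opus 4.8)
The plan is to relate the event $\{\lim_{t\to\infty}{\rm Z}_t=\mathbf 0\}$ to the event $\{{\bf T}_{\rm r}\in\mathbb R_+^d\}$ that the underlying spaLf ${\bf X}$ hits level $-{\rm r}$ with all coordinates finite, and then invoke the identity \eqref{2679}, namely $\p({\bf T}_{\rm r}\in\mathbb R_+^d)=e^{-\langle {\rm r},\phi(\mathbf 0)\rangle}$. The bridge between the two is Proposition \ref{8260}, which says that on the event $\{{\rm Z}_t\in\mathbb R_+^d,\,t\ge0\}$ (i.e.\ no explosion, which is guaranteed since ${\rm Z}$ is assumed conservative) one has $T_{\rm r}^{(i)}=\int_0^\infty Z_s^{(i)}\,ds$ for every $i\in[d]$, $\p_{\rm r}$-a.s. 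So the whole argument reduces to the claim
\[
\Big\{\lim_{t\to\infty}{\rm Z}_t=\mathbf 0\Big\}=\Big\{\int_0^\infty Z_s^{(i)}\,ds<\infty\ \text{for all }i\in[d]\Big\}\quad\text{up to a $\p_{\rm r}$-null set.}
\]
Once this is established, taking probabilities and using $T_{\rm r}^{(i)}=\int_0^\infty Z_s^{(i)}\,ds$ together with the dichotomy from \cite{cma1} (all coordinates of ${\bf T}_{\rm r}$ finite, or all infinite) yields $\p_{\rm r}(\lim_t{\rm Z}_t=\mathbf 0)=\p({\bf T}_{\rm r}\in\mathbb R_+^d)=e^{-\langle {\rm r},\phi(\mathbf 0)\rangle}$.

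For the inclusion $\subseteq$: if the integrals $\int_0^\infty Z_s^{(i)}\,ds$ were infinite for some $i$, I would argue from the Lamperti equation \eqref{Lamperti} that ${\rm Z}$ cannot converge to $\mathbf 0$. Here I expect to use irreducibility: if one coordinate has divergent integral, the fact that $M$ is irreducible should force, via the off-diagonal subordinators, that ${\rm Z}$ keeps receiving mass and does not go to $\mathbf 0$; more carefully, $Z_t^{(i)}\to 0$ for all $i$ together with finiteness of the process would make $t\mapsto\int_0^t Z_s^{(i)}\,ds$ converge unless $Z^{(i)}$ fails to be integrable at infinity, and one has to rule out the pathological case where $Z_t^{(i)}\to 0$ but $\int_0^\infty Z_s^{(i)}\,ds=\infty$. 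For the inclusion $\supseteq$: if all integrals $a_i:=\int_0^\infty Z_s^{(i)}\,ds$ are finite, then by Proposition \ref{8260} the spaLf ${\bf X}$ reaches $-{\rm r}$ at the finite multi-time ${\bf T}_{\rm r}=(a_1,\dots,a_d)$, and from \eqref{Lamperti} we get $Z_t^{(i)}=r_i+\sum_j X^{i,j}(\int_0^t Z_s^{(j)}\,ds)$; as $t\to\infty$ the arguments $\int_0^t Z_s^{(j)}\,ds$ increase to $a_j<\infty$, so the right-hand side converges to $r_i+\sum_j X^{i,j}(a_j)=r_i-r_i=0$ by definition of the hitting time, whence $Z_t^{(i)}\to0$. (One should check that each $X^{i,j}$ is a.s.\ continuous at the point $a_j$, which holds because a fixed time is a.s.\ not a jump time of a Lévy process, after conditioning appropriately; the monotone convergence of the time-change arguments is what makes this clean.)

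The main obstacle I anticipate is the pathological direction of the $\subseteq$ inclusion: ruling out that ${\rm Z}_t\to\mathbf 0$ while some $\int_0^\infty Z_s^{(i)}\,ds=\infty$. The natural tool is to show that on $\{\lim_t{\rm Z}_t=\mathbf 0\}$ all coordinate integrals are finite simultaneously — for which irreducibility plus the structure of \eqref{Lamperti} should be decisive. Concretely, suppose $\int_0^\infty Z_s^{(j)}\,ds=\infty$ for some $j$; pick $i$ with $m_{ij}\neq0$ so that $X^{i,j}$ is a non-trivial subordinator; then $X^{i,j}(\int_0^t Z_s^{(j)}\,ds)\to\infty$ as $t\to\infty$, and since the other off-diagonal terms $X^{i,k}(\cdot)$ are nonnegative, \eqref{Lamperti} forces $Z_t^{(i)}+|X^{i,i}(\int_0^t Z_s^{(i)}\,ds)|\to\infty$ along a subsequence in a way incompatible with $Z_t^{(i)}\to0$ unless the spectrally positive diagonal term runs off to $-\infty$; but $X^{i,i}$ cannot drift to $-\infty$ faster than the growing subordinator term without ${\rm Z}$ leaving $\mathbb R_+^d$, contradicting conservativeness. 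Making this dichotomy argument airtight — essentially showing $\{{\rm Z}_t\to\mathbf 0\}$ and $\{{\bf T}_{\rm r}\in\mathbb R_+^d\}$ agree a.s.\ — is where the real work lies; everything else is a bookkeeping consequence of Proposition \ref{8260} and \eqref{2679}.
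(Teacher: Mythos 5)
Your plan — establish the a.s.\ event equality $\{\lim_t{\rm Z}_t=\mathbf 0\}=\{\int_0^\infty Z_s^{(i)}\,ds<\infty,\ i\in[d]\}$ and then read off the probability from Proposition~\ref{8260} and \eqref{2679} — has the $\supseteq$ inclusion in common with the paper's proof, but the $\subseteq$ inclusion is precisely what the paper avoids proving, and your sketch of it does not close the gap. The paper replaces that inclusion with a weaker pathwise observation: if ${\rm Z}_t\to\mathbf 0$, then for every ${\rm r}'$ with $0\le r_i'<r_i$ for all $i$, there is a finite $t_0$ with $Z^{(i)}_{t_0}<r_i-r_i'$, so the Lamperti equation evaluated at the finite multi-time ${\rm a}_{t_0}=\big(\int_0^{t_0}Z^{(j)}_s\,ds\big)_{j\in[d]}$ shows the spaLf sits strictly below $-{\rm r}'$ there, forcing $\mathbf T_{{\rm r}'}\in\mathbb R_+^d$. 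Letting ${\rm r}'\uparrow{\rm r}$ and using continuity of ${\rm r}'\mapsto e^{-\langle{\rm r}',\phi(\mathbf 0)\rangle}$ gives the upper bound on the probability directly, never touching the boundary case ${\rm r}'={\rm r}$. The a.s.\ coincidence of the two events is then an \emph{a posteriori} consequence of the two bounds matching, not an ingredient of the proof.

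Your attempt to rule out the pathological case by irreducibility does not work, because the only nontrivial scenario has \emph{all} the integrals $\int_0^\infty Z^{(i)}_s\,ds$ diverge simultaneously — this is the ``all finite or all infinite'' dichotomy you cite, transferred through Proposition~\ref{8260}. You pick $i\neq j$ with $m_{ij}\neq0$ and note $X^{i,j}(a^{(j)}_t)\to\infty$; but then $a^{(i)}_t\to\infty$ as well, and nonnegativity of $Z^{(i)}_t$ only gives $X^{i,i}(a^{(i)}_t)\ge -r_i-\sum_{k\neq i}X^{i,k}(a^{(k)}_t)$, which is perfectly compatible with $X^{i,i}(a^{(i)}_t)$ tracking $-\sum_{k\neq i}X^{i,k}(a^{(k)}_t)$ closely enough that the right-hand side of \eqref{Lamperti} tends to $0$ while every time change diverges. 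Conservativeness and nonnegativity thus provide no pathwise obstruction; the statement that this scenario has probability zero is essentially the theorem itself, so the argument as sketched is circular. The ${\rm r}'<{\rm r}$ approximation in the paper is precisely the device that breaks this circularity.
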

\noindent Recall that when the underlying spaLf is integrable, that is $\e(X_1^{i,j})<\infty$, for all 
$i,j\in[d]$, then from Theorem 3.4 in \cite{cma1}, $\phi(0)>0$ if and only if ${\rm Z}$ is supercritical.
Hence our result is consistent with Theorem 2 and the subsequent remark in \cite{kp} where it is claimed
that if ${\rm Z}$ is critical or subcritical then extinction (at a finite time or at infinity) holds 
almost surely.

We define the probability of extinction at a finite time under $\p_{\rm r}$, ${\rm r}\in\mathbb{R}_+^d$ by,
\begin{equation}\label{5347}
q_{\rm r}:=\p_{\rm r}(\mbox{${\rm Z}_{t}=\mathbf{0}$ for some $t>0$}).
\end{equation}
Since the family of events $\{{\rm Z}_t=\mathbf{0}\}$ is non decreasing in $t$ and converges to the event 
$\{\mbox{${\rm Z}_{t}=\mathbf{0}$ for some $t>0$\}}$, 
the probability $q_{\rm r}$ can be written as 
\begin{equation}\label{5627}
q_{\rm r}=\lim\limits_{t\rightarrow +\infty} \mathbb{P}_{\rm r}({\rm Z}_{t}=\mathbf{0})\,.
\end{equation}
Note also that from (\ref{expLaplaceZ}), for all $t\ge0$, the mapping $\lambda\mapsto{\rm u}_t(\lambda)$ 
statisfies ${\rm u}_t(\lambda)\le{\rm u}_t(\lambda')$, whenever $\lambda\le \lambda'$. Moreover, the limit 
${\rm u}_t(\infty)$ of ${\rm u}_t(\lambda)$ when all coordinates $\lambda_1,\dots,\lambda_d$ of $\lambda$ 
tend to $\infty$ does not depend on the relative speeds at which the $\lambda_i$'s tend to $\infty$ and 
it satisfies,
\begin{equation}\label{2782}
\p_{\rm r}({\rm Z}_t=\mathbf{0})=e^{-\langle {\rm r},{\rm u}_t(\infty)\rangle}\,.
\end{equation}
The probability of extinction $q_{\rm r}$ depends on the finiteness of ${\rm u}_t(\infty)$ as follows. 

\begin{theorem}\label{5003} Let ${\rm Z}$ be a MCSBP. Assume that none of the processes $X^{i,j}$ for 
$i,j\in[d]$ such that $i\neq j$ is a compound Poisson process. Then we have the following dichotomy$:$ 
\begin{itemize}
\item[$(i)$] either for all $t\geq 0$ and $i\in[d]$, $u^{(i)}_{t}(\infty)<\infty$ and then for all
$t>0$ and ${\rm r}\in\mathbb{R}_+^d$, $\p_{\rm r}({\rm Z}_t=\mathbf{0})>0$ and in this case, 
\[q_{\rm r}=e^{-\langle {\rm r},\phi(\mathbf{0})\rangle},\;\;\;\mbox{for all ${\rm r}\in\mathbb{R}_+^d$, }\]
\item[$(ii)$] or for all $t\geq 0$ and $i\in[d]$, $u^{(i)}_{t}(\infty)=\infty$ and then for all 
${\rm r}\in\mathbb{R}^{d}_{+}\setminus\{\mathbf{0}\}$, $q_{\rm r}=0$.
\end{itemize}
\end{theorem}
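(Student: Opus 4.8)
The plan is to establish the dichotomy by analyzing the map $t\mapsto {\rm u}_t(\infty)$ through the differential system $(S_\varphi)$ together with the semigroup property ${\rm u}_{t+s}={\rm u}_t\circ{\rm u}_s$ and the monotonicity ${\rm u}_t(\lambda)\le {\rm u}_t(\lambda')$ for $\lambda\le\lambda'$. First I would observe that, by monotone convergence in $\lambda$, for each fixed $t$ the vector ${\rm u}_t(\infty)=\lim_{\lambda\to\infty}{\rm u}_t(\lambda)\in[0,\infty]^d$ is well defined, and that by the semigroup property it is non-increasing in $t$: indeed ${\rm u}_{t+s}(\infty)={\rm u}_t({\rm u}_s(\infty))\le {\rm u}_t(\infty)$ since ${\rm u}_s(\infty)\le\infty$ (and ${\rm u}_t$ is continuous and monotone, extended to $[0,\infty]^d$ by monotone limits). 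From (\ref{2782}), $\p_{\rm r}({\rm Z}_t=\mathbf 0)=e^{-\langle{\rm r},{\rm u}_t(\infty)\rangle}$, so $\p_{\rm r}({\rm Z}_t=\mathbf 0)>0$ iff every coordinate $u_t^{(i)}(\infty)$ is finite, and the whole point is to show this finiteness property is an ``all-$t$ or no-$t$'' affair.

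The key step is a zero–one type propagation argument for the finiteness of the coordinates, which is where the hypothesis that no off-diagonal $X^{i,j}$ is compound Poisson enters, via irreducibility. Suppose that for some $t_0>0$ and some $i$, $u_{t_0}^{(i)}(\infty)=\infty$. I would first argue that then $u_t^{(i)}(\infty)=\infty$ for all $t\le t_0$ (monotonicity gives $u_t^{(i)}(\infty)\ge u_{t_0}^{(i)}(\infty)=\infty$), and that, conversely, if $u_t^{(i)}(\infty)<\infty$ for some $t$ then it stays finite at all later times — so for each $i$ there is a threshold $\tau_i\in[0,\infty]$ with $u_t^{(i)}(\infty)=\infty$ for $t<\tau_i$ and finite for $t>\tau_i$. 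Next I would show all the $\tau_i$ coincide and equal $0$ or $\infty$. For this, use the ODE: if some coordinate blows up while another, say $u_t^{(j)}(\infty)$, is finite and positive on an interval, then since $\varphi_j$ is non-increasing in the $i$-th coordinate ($i\neq j$), letting that coordinate go to $\infty$ forces $\partial_t u_t^{(j)}(\infty)=-\varphi_j(\dots,\infty,\dots)$ to be controlled by the limit of $\varphi_j$, and the assumption that $X^{i,j}$ is not compound Poisson (so its subordinator part has either infinite activity, positive drift, or is trivial) together with irreducibility of $M$ guarantees the chain of dependencies that makes finiteness of one coordinate at time $t$ force finiteness of all of them at every time $>0$; this is the step establishing case $(i)$ versus $(ii)$ cleanly with no intermediate regime.

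Once the dichotomy $u_t^{(i)}(\infty)<\infty$ for all $t>0,i$ (case (i)) versus $u_t^{(i)}(\infty)=\infty$ for all $t\ge0,i$ (case (ii)) is established, the identification of $q_{\rm r}$ is comparatively routine. In case (ii), $\p_{\rm r}({\rm Z}_t=\mathbf 0)=e^{-\langle{\rm r},{\rm u}_t(\infty)\rangle}=0$ for every $t>0$ and every ${\rm r}\neq\mathbf 0$, whence $q_{\rm r}=\lim_t\p_{\rm r}({\rm Z}_t=\mathbf 0)=0$ by (\ref{5627}). In case (i), I would show that ${\rm u}_t(\infty)$ decreases, as $t\to\infty$, to a fixed point of the flow, i.e. to some $\lambda^*\in[0,\infty)^d$ with $\varphi(\lambda^*)=\mathbf 0$: the limit $\lambda^*$ exists by monotonicity, and passing to the limit in $(S_\varphi)$ (using continuity of $\varphi$ and the semigroup relation ${\rm u}_{t+s}(\infty)={\rm u}_t({\rm u}_s(\infty))$, letting $s\to\infty$ to get ${\rm u}_t(\lambda^*)=\lambda^*$) shows $\varphi(\lambda^*)=\mathbf 0$. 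By Theorem \ref{propTr}, part 3, using irreducibility and $(H_\varphi)$, the only solutions of $\varphi(\lambda)=\mathbf 0$ in $\mathbb R_+^d$ are $\mathbf 0$ and $\phi(\mathbf 0)$. To pin down which one $\lambda^*$ is, I would compare with Theorem \ref{1005} and Proposition \ref{8260}: on the event $\{{\rm Z}_t\in\mathbb R_+^d,\ t\ge0\}$ the extinction event $\{{\rm Z}_t=\mathbf 0$ for some $t\}$ is, up to a null set, the event $\{\mathbf T_{\rm r}\in\mathbb R_+^d\}$ of a finite multivariate hitting time — which occurs with probability $e^{-\langle{\rm r},\phi(\mathbf 0)\rangle}$ by (\ref{2679}) — combined with the fact that extinction at a finite time implies $\lim_t{\rm Z}_t=\mathbf 0$, which by Theorem \ref{1005} also has probability $e^{-\langle{\rm r},\phi(\mathbf 0)\rangle}$; since $q_{\rm r}\le\p_{\rm r}(\lim_t{\rm Z}_t=\mathbf 0)$ always, the equality $q_{\rm r}=e^{-\langle{\rm r},\phi(\mathbf 0)\rangle}$ in case (i) follows once one checks $q_{\rm r}\ge e^{-\langle{\rm r},\phi(\mathbf 0)\rangle}$, i.e. that $\lambda^*=\phi(\mathbf 0)$ rather than $\mathbf 0$. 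I expect this last point — ruling out $\lambda^*=\mathbf 0$, equivalently showing that in case (i) tending to $\mathbf 0$ at infinity actually forces finite-time extinction — to be the main obstacle, and I would handle it by a direct argument using Proposition \ref{8260}: on $\{\lim_t{\rm Z}_t=\mathbf 0\}$ the integrals $\int_0^\infty Z_s^{(i)}\,ds=T_{\rm r}^{(i)}$ are almost surely finite in case (i) (this is precisely where finiteness of $u_t^{(i)}(\infty)$, i.e. Grey-type integrability of the diagonal mechanisms, is used, mirroring the $d=1$ argument), and finiteness of all the $T_{\rm r}^{(i)}$ on this event means $\mathbf T_{\rm r}\in\mathbb R_+^d$, which via the Lamperti representation (\ref{Lamperti}) gives ${\rm Z}_{t}=\mathbf 0$ for $t\ge\max_i T_{\rm r}^{(i)}$.
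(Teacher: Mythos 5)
Your outline of the first two thirds of the argument matches the paper's approach closely: the dichotomy for $u_t^{(i)}(\infty)$ via the semigroup relation and an irreducibility/non--compound-Poisson propagation argument is exactly the content of the paper's Lemma~\ref{asympu}, case $(ii)$ follows immediately from~(\ref{2782}), and in case $(i)$ you correctly reduce the problem to identifying the fixed point $\lambda^*=\lim_{s\to\infty}{\rm u}_s(\infty)$, which satisfies $\varphi(\lambda^*)=\mathbf{0}$ and hence lies in $\{\mathbf{0},\phi(\mathbf{0})\}$ by part~3 of Theorem~\ref{propTr}.

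The gap is in the step you yourself flag as the main obstacle: ruling out $\lambda^*=\mathbf{0}$. Your proposed argument is that on $\{\lim_t{\rm Z}_t=\mathbf{0}\}$ the coordinates $T_{\rm r}^{(i)}=\int_0^\infty Z_s^{(i)}\,ds$ are finite, and that ``finiteness of all the $T_{\rm r}^{(i)}$ \dots\ via the Lamperti representation gives ${\rm Z}_t=\mathbf{0}$ for $t\ge\max_i T_{\rm r}^{(i)}$.'' This does not work. First, $T_{\rm r}^{(i)}$ lives on the Lamperti clock (the time scale of the driving L\'evy processes), not on the original time scale of ${\rm Z}$, so the inequality $t\ge\max_i T_{\rm r}^{(i)}$ is comparing quantities on two different scales and is meaningless as stated. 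Second, and more fundamentally, finiteness of $\mathbf{T}_{\rm r}=\int_0^\infty{\rm Z}_s\,ds$ is \emph{not} where case~$(i)$ enters: the events $\{\mathbf{T}_{\rm r}\in\mathbb{R}_+^d\}$ and $\{\lim_t{\rm Z}_t=\mathbf{0}\}$ have the same probability $e^{-\langle{\rm r},\phi(\mathbf{0})\rangle}$ in both cases (by Proposition~\ref{8260} and Theorem~\ref{1005}), so the integrals are a.s.\ finite on the extinguishment event whether or not Grey-type conditions hold. Finiteness of $\int_0^\infty Z_s^{(i)}\,ds$ together with $Z_t^{(i)}\to0$ simply does not force $Z^{(i)}$ to hit $0$ at a finite time — distinguishing $\lim_t{\rm Z}_t=\mathbf{0}$ from actual finite-time extinction is precisely what the whole theorem is about, so this reasoning is circular. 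The paper instead closes this step analytically: by Lemma~\ref{u.D}, ${\rm u}_t(\lambda)\in D_\varphi$ whenever $\lambda\in D_\varphi$, whence $\lambda^*\in\overline{D}_\varphi$; and from the proof of Theorem~3.4 in~\cite{cma1}, $\phi(\mathbf{0})$ is the \emph{unique} solution of $\varphi(\lambda)=\mathbf{0}$ lying in $\overline{D}_\varphi$. That forces $\lambda^*=\phi(\mathbf{0})$ without any appeal to the pathwise Lamperti picture. You should replace your probabilistic resolution by this analytic one (or supply a genuinely different argument that does not conflate finiteness of the integral with finite-time extinction).
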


\noindent Our goal is to distinguish between two types of extinction defined by the two exhaustive events
\[\left\{\lim\limits_{t\rightarrow\infty}{\rm Z}_t=\mathbf{0} \,\text{ and }\, {\rm Z}_t>\mathbf{0},\;
\mbox{for all $t>0$}\right\}\;\;\;\mbox{and}
\;\;\;\left\{\mbox{${\rm Z}_{t}=\mathbf{0}$ for some $t>0$}\right\}.\]
In the first case, we will say that extinction occurs at infinity and that the MCSBP is {\it extinguished} and in 
the second case we will say that extinction occurs at a finite time and that the MCSBP {\it becomes extinct}. 
Then, we wish to find conditions bearing on the mechanism $\varphi$ allowing us to determine if extinction holds 
at a finite time or not. The probability for ${\rm Z}$ to be extinguished at infinity under $\p_{\rm r}$, 
${\rm r}\in\mathbb{R}_+^d$ will be denoted as follows:
\begin{equation}\label{5348}
\bar{q}_{\rm r}:=\p_{\rm r}\left(\lim\limits_{t\rightarrow\infty}{\rm Z}_t=\mathbf{0}\,
\text{ and }\,{\rm Z}_t>\mathbf{0},\;\mbox{for all $t>0$}\right).
\end{equation}
Let us briefly recall the case $d=1$, from \cite{gr}. Let $\varphi$ be the branching mechanism of the continuous 
state branching process $Z$. If the following integral condition, called Grey's condition, is satisfied,
\[(G_\varphi)\qquad\int^\infty\frac {ds}{\varphi(s)}<\infty,\]
then extinction can only occur at a finite time. Moreover, under $(G_\varphi)$, if $Z$ starts from $r\ge0$, then 
the probability of extinction is $e^{-r\phi(0)}$, where $\phi$ is the inverse of $\varphi$.
If $(G_\varphi)$ is not satisfied, then extinction of $Z$ can only occur at infinity.\\

Let us now consider the general case $d\ge1$. To this aim, we denote by $\tilde{\varphi}_{i}$ the 
Laplace exponent of the spectrally positive L\'evy process $X^{i,i}$, that is 
\begin{equation}\label{2526}
\tilde{\varphi}_{i}(s)=\varphi_i(s{\rm e}_i),\quad s\ge0.
\end{equation}
Note that since $(H_\varphi)$ is satisfied, none of the processes $X^{i,i}$ is a subordinator, so that
$\lim\limits_{s\rightarrow\infty}\tilde{\varphi}_{i}(s)=\infty$, for all $i\in[d]$.
Then let us introduce the following condition bearing on this Laplace exponent,
\[(G^{(i)}_{\varphi}) \qquad\qquad \int^{\infty} \dfrac{ds}{\tilde{\varphi}_{i}(s)}<\infty\,.\ \]
We are now able to state our main result. It provides an extension for MCSBP of Grey's condition.
Recall the definition of $q_{\rm r}$ and $\bar{q}_{\rm r}$ in (\ref{5347}) and (\ref{5348}).

\begin{theorem}\label{4588}
Let ${\rm Z}$ be a MCSBP such that none of the processes $X^{i,j}$ for $i,j\in[d]$, 
$i\neq j$ is a compound Poisson process. 
\begin{itemize}
\item[$1.$] Assume that $(G_{\varphi}^{(i)})$ is satisfied for all $i\in[d]$. Then ${\rm Z}$ can only 
become extinct at a finite time, that is $\bar{q}_{\rm r}=0$ for all ${\rm r}\in\mathbb{R}^d_+$. Moreover,   
\begin{equation}\label{3426}
q_{\rm r}=e^{-\langle {\rm r},\phi(\mathbf{0})\rangle},
\end{equation}
for all ${\rm r}\in\mathbb{R}^d_+$, and in this case, 
$q_{\rm r}=\p_{\rm r}\left(\lim\limits_{t\rightarrow\infty}{\rm Z}_t=\mathbf{0}\right)$.
\item[$2.$] Assume that $(G_{\varphi}^{(i)})$ is not satisfied for some $i\in[d]$. Then, ${\rm Z}$ cannot 
become extinct at a finite time i.e. $q_{\rm r}=0$ for all ${\rm r}\in\mathbb{R}_+^d\setminus\{\mathbf{0}\}$. 
Moreover,
\begin{equation}\label{3427}
\bar{q}_{\rm r}=e^{-\langle {\rm r},\phi(\mathbf{0})\rangle},
\end{equation}
for all ${\rm r}\in\mathbb{R}^d_+\setminus\{\mathbf{0}\}$, and in this case, 
$\bar{q}_{\rm r}=\p_{\rm r}\left(\lim\limits_{t\rightarrow\infty}{\rm Z}_t=\mathbf{0}\right)$.
\end{itemize}
\end{theorem}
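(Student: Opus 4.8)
The plan is to reduce the theorem, by means of Theorems~\ref{1005} and~\ref{5003}, to a dichotomy for the quantities $u_t^{(i)}(\infty)$, and then to resolve that dichotomy by comparing the differential system $(S_\varphi)$ with the one-dimensional mechanisms $\tilde\varphi_i$. First, the event $\{\lim_{t\to\infty}{\rm Z}_t=\mathbf{0}\}$ is the disjoint union of $\{{\rm Z}_t=\mathbf{0}\text{ for some }t>0\}$ (on which ${\rm Z}$ indeed tends to $\mathbf{0}$, since $\mathbf{0}$ is absorbing) and $\{\lim_{t\to\infty}{\rm Z}_t=\mathbf{0}\text{ and }{\rm Z}_t>\mathbf{0}\ \forall\,t>0\}$, so that $\p_{\rm r}(\lim_{t\to\infty}{\rm Z}_t=\mathbf{0})=q_{\rm r}+\bar q_{\rm r}$ and Theorem~\ref{1005} gives $q_{\rm r}+\bar q_{\rm r}=e^{-\langle{\rm r},\phi(\mathbf{0})\rangle}$ for every ${\rm r}$. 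By Theorem~\ref{5003} it therefore suffices to establish the two implications: if $(G_\varphi^{(i)})$ holds for all $i$, then $u_t^{(i)}(\infty)<\infty$ for all $t>0$ and $i\in[d]$ (so we are in case~$(i)$ of Theorem~\ref{5003}); and if $(G_\varphi^{(i)})$ fails for some $i$, then $u_t^{(i)}(\infty)=\infty$ for all $t>0$ (case~$(ii)$). Both rest on two elementary estimates, valid for all $\lambda\in\mathbb{R}_+^d$: since $\varphi_i$ is nonincreasing in each off-diagonal coordinate, $\varphi_i(\lambda)\le\varphi_i(\lambda_i{\rm e}_i)=\tilde\varphi_i(\lambda_i)$; and from the L\'evy--Khintchine form~(\ref{2552}) together with the bounds $0\le1-e^{-u}\le u\wedge1$, a reverse inequality $\varphi_i(\lambda)\ge\tilde\varphi_i(\lambda_i)-\sum_{j\neq i}a_{j,i}\lambda_j-K_i$, where $K_i:=\pi_i(\{|{\rm x}|\ge1\})<\infty$.

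For the second implication, fix a bad index $i$. By the first estimate and $(S_\varphi)$, $\partial_t u_t^{(i)}(\lambda)=-\varphi_i({\rm u}_t(\lambda))\ge-\tilde\varphi_i(u_t^{(i)}(\lambda))$, so a scalar comparison with the flow $w_t(\mu)$ of the one-dimensional CSBP of branching mechanism $\tilde\varphi_i$ (a genuine branching mechanism, $X^{i,i}$ not being a subordinator under $(H_\varphi)$) gives $u_t^{(i)}(\lambda)\ge w_t(\lambda_i)$. Since $(G_\varphi^{(i)})$ fails, the case $d=1$ of Grey's result (\cite{gr}, recalled above) says this CSBP can only be extinguished at infinity, i.e.\ $w_t(\mu)\to\infty$ as $\mu\to\infty$, for every $t>0$; letting $\lambda\to\infty$ yields $u_t^{(i)}(\infty)=\infty$. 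We are then in case~$(ii)$ of Theorem~\ref{5003}, so $q_{\rm r}=0$, and hence $\bar q_{\rm r}=e^{-\langle{\rm r},\phi(\mathbf{0})\rangle}$ for ${\rm r}\ne\mathbf{0}$ by the first paragraph.

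For the first implication, the crucial point is that $(G_\varphi^{(i)})$ forces $\tilde\varphi_i$ to grow superlinearly: conservativeness gives $\alpha_i=0$, hence $\tilde\varphi_i(0)=0$, and by convexity $s\mapsto\tilde\varphi_i(s)/s$ is nondecreasing, with a limit $L\in(0,\infty]$; if $L<\infty$ then $\tilde\varphi_i(s)\le Ls$ and $\int^\infty ds/\tilde\varphi_i(s)=\infty$, contradicting $(G_\varphi^{(i)})$. So $\tilde\varphi_i(s)/s\to\infty$ for every $i$, whence $\Psi:=\min_{i\in[d]}\tilde\varphi_i$ satisfies $\Psi(s)/s\to\infty$ and $\int^\infty ds/\Psi(s)\le\sum_i\int^\infty ds/\tilde\varphi_i(s)<\infty$. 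Now fix $t_0>0$, take $\lambda=n\mathbf{1}$, and set $V_t:=\max_{i\in[d]}u_t^{(i)}(n\mathbf{1})$. Applying the reverse estimate to an index $m$ realising the maximum and using $u_t^{(j)}\le V_t$, one bounds the upper right Dini derivative by $D^+V_t\le-\Psi(V_t)+AV_t+K$, with $A:=\max_i\sum_{j\ne i}a_{j,i}$ and $K:=\max_i K_i$. Comparing $V$ with the autonomous scalar equation $\dot\sigma_t=-\tilde G(\sigma_t)$, $\sigma_0=n$, where $\tilde G(s):=\Psi(s)-As-K$: since $\tilde G(s)/s\to\infty$, $\tilde G$ is eventually positive with $\int^\infty ds/\tilde G(s)<\infty$, so its flow ``comes down from infinity in finite time'', and a short analysis of this equation shows $\sigma_{t_0}\le C_{t_0}$ for a constant $C_{t_0}$ that does not depend on $n$. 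By the comparison principle, $V_{t_0}\le\sigma_{t_0}\le C_{t_0}$; letting $n\to\infty$ and using that ${\rm u}_{t_0}(\infty)=\lim_n{\rm u}_{t_0}(n\mathbf{1})$, we get $u_{t_0}^{(i)}(\infty)\le C_{t_0}<\infty$ for every $i$. We are thus in case~$(i)$ of Theorem~\ref{5003}, so $q_{\rm r}=e^{-\langle{\rm r},\phi(\mathbf{0})\rangle}$ and $\bar q_{\rm r}=0$ by the first paragraph.

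I expect the first implication to be the main obstacle. Since $(S_\varphi)$ is coupled one cannot argue coordinate by coordinate, and controlling the aggregate $\max_i u_t^{(i)}$ replaces the diagonal mechanisms by the ``effective'' one-dimensional mechanism $\tilde G(s)=\Psi(s)-As-K$; what makes this work is that $\tilde G$ still satisfies Grey's integrability condition, and this is precisely where the superlinearity of $\tilde\varphi_i$ (itself a consequence of $(G_\varphi^{(i)})$ via convexity) is used, namely to absorb the linear cross-terms produced by the off-diagonal subordinators. The remaining steps --- the two estimates on $\varphi_i$, the scalar ODE comparisons, and the ``coming down from infinity'' estimate for $\sigma$ --- are routine.
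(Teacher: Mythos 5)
Your proof is correct, and for part~1 it follows a genuinely different route from the paper. For part~2, your argument is essentially the paper's: both compare $\partial_t u^{(i)}_t$ with $-\tilde\varphi_i(u^{(i)}_t)$ using $\varphi_i(\lambda)\le\tilde\varphi_i(\lambda_i)$ and conclude $u^{(i)}_t(\infty)=\infty$ from the one-dimensional divergence (the paper integrates the inequality to write $t\ge\int_{u_t^{(i)}(\lambda)}^{\lambda_i}d\alpha/\tilde\varphi_i(\alpha)$, you phrase it as an ODE comparison with the 1D flow $w_t$; these are the same computation). You also correctly note the need for Lemma~\ref{asympu} to propagate $u^{(i)}_t(\infty)=\infty$ to all coordinates, which the paper uses as well.

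For part~1 the routes diverge. The paper constructs an auxiliary branching-like mechanism
$\varphi^*_i(\lambda)=\bar\varphi(\lambda_i)+\sum_{j\neq i}\bar{\bar\varphi}(\lambda_j)$ with
$\bar\varphi=\min_i\tilde\varphi_i$ and $\bar{\bar\varphi}=\sum_{k\neq l}\varphi_{kl}$, proves a $d$-dimensional comparison $u_t(\lambda)\le v_t(\lambda)$ on the diagonal $\Delta$ (Lemmas~\ref{8116},~\ref{7256}, Corollary~\ref{comparaison.infty}), and observes that on $\Delta$ the system $(S_{\varphi^*})$ collapses to the scalar ODE $\dot v=-f(v)$ with $f=\bar\varphi+(d-1)\bar{\bar\varphi}$, which still satisfies Grey's condition. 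You instead bound $\varphi_i$ from below directly from the L\'evy--Khintchine form (\ref{2552}), obtaining $\varphi_i(\lambda)\ge\tilde\varphi_i(\lambda_i)-\sum_{j\neq i}a_{j,i}\lambda_j-\pi_i(\{|{\rm x}|\ge1\})$, and then control $V_t=\max_i u^{(i)}_t(n\mathbf 1)$ by a scalar Dini differential inequality $D^+V_t\le-\tilde G(V_t)$ with $\tilde G(s)=\Psi(s)-As-K$, $\Psi=\min_i\tilde\varphi_i$. The ``max-coordinate'' device replaces the paper's $d$-dimensional comparison lemma by an elementary one-dimensional one and sidesteps the auxiliary system $(S_{\varphi^*})$ altogether; both approaches hinge on the same observation, namely that $(G_\varphi^{(i)})$ forces $\tilde\varphi_i$ (hence $\Psi$, hence $\bar\varphi$) to grow superlinearly, so the at-most-linear off-diagonal contribution is absorbed without destroying integrability of $1/\Psi$ at infinity. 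The price you pay is a cruder linear majorisation of the off-diagonal terms (drift $+$ mass of large jumps) versus the paper's exact $\bar{\bar\varphi}$, but since only integrability at $\infty$ matters, this is immaterial. Two small points worth spelling out to make your part~1 airtight: first, the Dini comparison $V_t\le\sigma_t$ is justified because the scalar equation $\dot\sigma=-\tilde G(\sigma)$ has unique solutions on $(s_1,\infty)$ (where $\tilde G>0$) by separation of variables; second, your ``coming down from infinity'' bound $C_{t_0}=F^{-1}(F(\infty)-t_0)$ is finite uniformly in $n$ only for $t_0$ small enough (namely $t_0<F(\infty)-F(s_1)$), but the extension to all $t>0$ is immediate from the monotonicity of $t\mapsto u^{(i)}_t(\infty)$ recorded in Lemma~\ref{u.D}.
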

\noindent Note that as far as we know, part 2.~of Theorem \ref{4588} when $d=1$ is never mentioned in the 
literature.\\

As mentioned in the introduction, the fact that extinction at infinity is only determined by the law of the
diagonal L\'evy processes $X^{i,i}$ may seem quite surprising at first glance. 
The case of neutral MCSBP's for which part 1.~of Theorem \ref{4588} can be verified directly, gives us a better 
intuition of this result. Recall that a MCSBP is said to be neutral if  the law of $\sum_{i=1}^dX^{i,j}$ does 
not depend on $j$. It is plain that in this case, $Z^{(1)}+\dots+Z^{(d)}$ is a continuous state branching 
process whose branching mechanism is $d\hat{\varphi}$, where $\hat{\varphi}$ is the Laplace exponent of 
$\sum_{i=1}^dX^{i,j}$. Therefore, from Grey's result, ${\rm Z}$ becomes extinct at a finite time with positive 
probability if and only if 
\begin{equation}\label{6566}
\int^\infty\frac{ds}{\hat{\varphi}(s)}<\infty.
\end{equation}
Let us denote by $\varphi_{ij}$ the Laplace exponent of the L\'evy process 
$X^{i,j}$, that is 
\begin{equation}\label{1736}
\varphi_{ij}(s)=\varphi_j(s{\rm e}_i),\quad s\ge0,
\end{equation}
and note that $\tilde{\varphi}_i=\varphi_{ii}$ according to our previous notation. Note also that 
$\hat{\varphi}(s)=\varphi_j(s,s,\dots,s)$, for all $s\ge0$ and $j\in[d]$. Then from Lemma \ref{7256} and 
inequality (\ref{5558}),
\begin{equation}\label{5031}
\sum_{i=1}^d\varphi_{ij}(s)\le \hat{\varphi}(s)\le \tilde{\varphi}_j(s)\,,\;\;\;j\in[d],\;s\ge0.
\end{equation}
Therefore if $(\ref{6566})$ holds, then $(G_{\varphi}^{(i)})$ holds for all $j\in[d]$. Now assume that
$(G_{\varphi}^{(i)})$ holds for all $j\in[d]$. Then from the asymptotic behaviour of Laplace exponents of 
spectrally positive L\'evy processes, see Chap.~VII and in \cite{be}, 
$\lim_{s\rightarrow\infty}\tilde{\varphi}_j(s)/s=\infty$ and since $(\sum_{i\neq j}\varphi_{ij}(s))/s$ is 
bounded by a constant when $s$ is large, $\sum_{i=1}^d\varphi_{ij}(s)$ is equivalent to $\tilde{\varphi}_j(s)$ 
when $s$ tends to infinity. This means that $\int^\infty\frac{ds}{\sum_{i=1}^d\varphi_{ij}(s)}<\infty$, 
so that $(\ref{6566})$ holds. Our proof in 
the general case is based on similar arguments. In order to prove part 1.~of Theorem \ref{4588}, we will 
actually construct a function $\hat{\varphi}$ (no necessarily being a Laplace exponent) which minimizes all 
Laplace exponents $\tilde{\varphi}_j$ and such that condition (\ref{6566}) implies part $(i)$ in Theorem 
\ref{5003}. The general idea is that the distinction between extinction at a finite time and extinction at 
infinity depends only on the asymptotic behaviour of $\varphi_i(\lambda)$, $i\in[d]$, when 
$\lambda\rightarrow\infty$ and this is the same as the asymptotic behaviour of $\tilde{\varphi}_i(\lambda_i)$, 
$i\in[d]$, when $\lambda_i\rightarrow\infty$. Let us emphasize that conditions for conservativeness  
certainly depends on the law of the subordinators $X^{i,j}$. Again, one can easily convince ourself of this 
fact by looking at the neutral case.\\

We end this section by saying a few words about the exclusion of compound Poisson processes.
If for instance, type $1$ is such that for all $j\neq 1$, $X^{1,j}$ is either 
identically equal to 0 or is a compound Poisson process, then there is an a.s.~positive random 
time $\tau$ such that $X^{1,j}_t=0$, for all $j\neq 1$ and $t\in[0,\tau]$. Hence we derive from (\ref{Lamperti}) 
that if ${\rm r}$ is such that $r_1=0$, then $Z^{(1)}_t=0$, for all $t\in[0,\gamma]$ for some a.s.~positive random 
time $\gamma$. If moreover conditions $(G_{\varphi}^{(j)})$ are satisfied for all $j\neq 1$, then from part 
$(ii)$ of the theorem, the process ${\rm Z}$ restricted to types $2,3,\dots,d$ can reach ${\bf 0}$ with positive
probability. Moreover, it is reasonable to think that this hitting time of ${\bf 0}$ can occur in the time 
interval $[0,\gamma]$ with positive probability. Therefore, the probability of extinction at a finite 
time of ${\rm Z}$ is positive in this case. Actually, whenever there exist processes $X^{i,j}$, $i\neq j$ that are 
compound Poisson processes, it is possible that more than one condition $(G_{\varphi}^{(i)})$ is required for the 
process not to become extinct at a finite time. A general result including compound Poisson processes could be
proved, but we do not think this case is much relevant.  

\section{Proof of Theorems \ref{1005} and \ref{5003}}\label{proofs.ext}

\noindent {\it Proof of Theorem $\ref{1005}$}:
The Lamperti representation (\ref{Lamperti}) yields the following path by path 
relationship between ${\rm Z}$ starting at ${\rm Z}_0={\rm r}$ and the first passage times of 
its underlying spaLf,   
\begin{equation}\label{8556}
\left\{\lim\limits_{t\rightarrow\infty}{\rm Z}_t=\mathbf{0}\right\}\subset
\left\{\mathbf{T}_{{\rm r}'}\in\mathbb{R}_+^d\right\},\;\;\;
\mbox{for all ${\rm r}'$ such that $0\le r_i'<r_i$, for all $i\in[d]$.}
\end{equation}
Indeed, it follows directly from equation (\ref{Lamperti}) that for ${\rm Z}$ to be as close to $\mathbf{0}$ 
as possible, the spaLf ${\bf X}$ has to reach all possible levels ${\rm r}'$ such that $0\le r_i'<r_i$, 
for all $i\in[d]$. Together with (\ref{2679}) inclusion (\ref{8556}) implies that 
\[\p_{\rm r}\left(\lim\limits_{t\rightarrow\infty}{\rm Z}_t=\mathbf{0}\right)\le e^{-\langle {\rm r}',
\phi(\mathbf{0})\rangle},\;\;\;\mbox{for all ${\rm r}'$ such that $0\le r_i'<r_i$, for all $i\in[d]$,}\]
which gives one inequality in (\ref{5127}) by continuity.

In order to prove the other inequality, let us set 
$\int_0^\infty {\rm Z}_s\,ds=\left(\int_0^\infty Z_s^{(i)}\,ds\right)_{i\in[d]}$ and 
note that from (\ref{Lamperti}), on the set $\left\{\int_0^\infty {\rm Z}_s\,ds\in\mathbb{R}_+^d\right\}$, 
the process ${\rm Z}$ converges $\p_{\rm r}$-almost surely and its limit is $\mathbf{0}$. This implies the 
inclusion,
\[\left\{\int_0^\infty {\rm Z}_s\,ds\in\mathbb{R}_+^d\right\}
\subset\left\{\lim_{t\rightarrow\infty}{\rm Z}_t=\mathbf{0}\right\},\;\;\;\mbox{$\p_{\rm r}$-a.s.}\]
Then recall from Proposition \ref{8260} that, 
${\bf T}_{\rm r}=\int_0^\infty {\rm Z}_s\,ds$,
$\p_{\rm r}$-a.s. on the set $\{{\rm Z}_t\in\mathbb{R}^d_{+},\,t\ge0\}$ (which is supposed to be of probability 
1 here), for all ${\rm r}\in\mathbb{R}_+^d$, so that from (\ref{2679}) and the above inclusion,
\[\p({\bf T}_{\rm r}\in\mathbb{R}_+^d)=e^{-\langle {\rm r},\phi(\mathbf{0})\rangle}\le
\p_{\rm r}\left(\lim_{t\rightarrow\infty}{\rm Z}_t=\mathbf{0}\right),\]
which achieves our proof. $\quad\Box$\\

\noindent The next two lemmas will be needed for the proof of Theorem \ref{5003}.

\begin{lemma}\label{u.D}
For all $\lambda\in D_{\varphi}$ and $t\ge0$,
${\rm u}_{t}(\lambda)\in D_{\varphi}$ and for all $i\in[d]$, the mapping $t\mapsto u^{(i)}_{t}(\lambda)$ 
is decreasing. Moreover for all $i\in[d]$, the mapping $t\mapsto u^{(i)}_{t}(\infty)$ is decreasing.
\end{lemma}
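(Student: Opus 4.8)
The plan is to prove Lemma \ref{u.D} in three steps, corresponding to its three assertions, using the differential system $(S_\varphi)$ together with the semigroup property and the fact (Theorem \ref{propTr}, part 2) that $\varphi:D_\varphi\to(0,\infty)^d$ is a diffeomorphism with inverse $\phi$, so in particular $D_\varphi$ is exactly the set where all $\varphi_i>0$.

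\medskip

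First I would show that $D_\varphi$ is forward-invariant under the flow ${\rm u}_t$. Fix $\lambda\in D_\varphi$, so $\varphi_i(\lambda)>0$ for all $i\in[d]$. Set $\tau=\inf\{t\ge0:{\rm u}_t(\lambda)\notin D_\varphi\}$ and suppose for contradiction that $\tau<\infty$. On $[0,\tau)$ we have $\varphi_i({\rm u}_t(\lambda))>0$, hence by $(S_\varphi)$ each coordinate $u_t^{(i)}(\lambda)$ is strictly decreasing on $[0,\tau)$; thus ${\rm u}_\tau(\lambda)\le\lambda$ coordinatewise and in particular ${\rm u}_\tau(\lambda)\in\mathbb{R}_+^d$. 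Since $\lambda_j\mapsto\varphi_i(\lambda)$ is nonincreasing for $j\ne i$ and $\lambda_i\mapsto\varphi_i(\lambda)$ is nondecreasing (convexity of $\varphi_i$ together with $\varphi_i(\mathbf 0)=-\alpha_i=0$ under conservativeness and $\tilde\varphi_i\to\infty$), one gets $\varphi_i({\rm u}_\tau(\lambda))\ge\varphi_i(u_\tau^{(1)}(\lambda),\dots,\lambda_i,\dots,u_\tau^{(d)}(\lambda))\ge\cdots$; iterating this monotonicity coordinate by coordinate I would compare ${\rm u}_\tau(\lambda)$ with $\lambda$ and deduce $\varphi_i({\rm u}_\tau(\lambda))>0$ for all $i$, i.e. ${\rm u}_\tau(\lambda)\in D_\varphi$. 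But $D_\varphi$ is open (preimage of $(0,\infty)^d$ under the continuous $\varphi$), contradicting the definition of $\tau$. Hence $\tau=\infty$ and ${\rm u}_t(\lambda)\in D_\varphi$ for all $t\ge0$. The strict decrease of each $t\mapsto u_t^{(i)}(\lambda)$ then follows immediately from $(S_\varphi)$ since $\varphi_i({\rm u}_t(\lambda))>0$ for all $t$.

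\medskip

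For the last assertion, about $t\mapsto u_t^{(i)}(\infty)$, I would pass to the limit in $\lambda$. Recall ${\rm u}_t(\infty)=\lim_{\lambda\to\infty}{\rm u}_t(\lambda)$ exists (monotone in $\lambda$) and is the decreasing limit as $\lambda\to\infty$ of ${\rm u}_t(\lambda)$; moreover for $\lambda$ with all coordinates large enough, $\lambda\in D_\varphi$, so by the previous paragraph ${\rm u}_t(\lambda)$ is decreasing in $t$ for each such $\lambda$. For $0\le s\le t$, the semigroup property gives ${\rm u}_t(\lambda)={\rm u}_{t-s}({\rm u}_s(\lambda))$; letting $\lambda\to\infty$ and using continuity/monotonicity of $\mu\mapsto{\rm u}_{t-s}(\mu)$ together with ${\rm u}_s(\lambda)\uparrow{\rm u}_s(\infty)$ (the limit possibly being $+\infty$ in some coordinates, which is handled by the convention ${\rm u}_{t-s}(\infty)$) yields ${\rm u}_t(\infty)={\rm u}_{t-s}({\rm u}_s(\infty))$. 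Since ${\rm u}_{t-s}(\mu)\le\mu$ coordinatewise whenever $\mu\in D_\varphi$ or $\mu$ has large coordinates (by forward-invariance of $D_\varphi$ and decrease along the flow, approximating $\mu={\rm u}_s(\infty)$ from below), we obtain $u_t^{(i)}(\infty)\le u_s^{(i)}(\infty)$ for all $i$. Strictness follows from $(S_\varphi)$ applied at ${\rm u}_s(\infty)$ once one checks ${\rm u}_s(\infty)\in D_\varphi$, which is the limiting version of Step 1.

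\medskip

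The main obstacle I anticipate is the bookkeeping in Step 1: making rigorous the ``iterate the coordinatewise monotonicity'' argument that $\varphi_i({\rm u}_\tau(\lambda))>0$ from ${\rm u}_\tau(\lambda)\le\lambda$, since increasing the $i$-th coordinate increases $\varphi_i$ while decreasing the others also increases $\varphi_i$, so both directions of the comparison ${\rm u}_\tau(\lambda)\le\lambda$ actually push $\varphi_i$ up — this should work cleanly but must be stated carefully using that $\varphi_i$ is nondecreasing in $\lambda_i$ and nonincreasing in $\lambda_j$, $j\ne i$. A secondary subtlety is handling coordinates of ${\rm u}_s(\infty)$ that may equal $+\infty$ when passing to the limit in the semigroup identity; this is exactly the kind of issue the finiteness dichotomy of Theorem \ref{5003} is designed to address, so at this stage I would simply work with the convention that ${\rm u}_{t-s}$ extended by monotone limits is well-defined on $[0,\infty]^d$ and defer the finiteness discussion.
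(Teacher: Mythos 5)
Your Step 1 contains a genuine gap that breaks the argument. You assert that $\lambda_i\mapsto\varphi_i(\lambda)$ is nondecreasing, citing convexity, $\tilde\varphi_i(0)=0$, and $\tilde\varphi_i(\infty)=\infty$. This inference is false: a convex function $f$ with $f(0)=0$ and $f(\infty)=\infty$ need not be nondecreasing — e.g.\ $f(s)=s^2-s$ (the Laplace exponent of a drifted Brownian motion) decreases on $[0,\tfrac12]$. In fact $\tilde\varphi_i$ is decreasing on an initial interval whenever $\phi_i(\mathbf 0)>0$. Consequently the coordinatewise comparison cannot yield $\varphi_i({\rm u}_\tau(\lambda))>0$ from ${\rm u}_\tau(\lambda)\le\lambda$: decreasing the $j$-th coordinates for $j\ne i$ does push $\varphi_i$ up, but decreasing the $i$-th coordinate pushes $\varphi_i$ \emph{down} wherever $\tilde\varphi_i$ is increasing (and, if it were genuinely nonincreasing as your argument in fact requires, it would contradict $\tilde\varphi_i\to\infty$). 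Even in $d=1$ the static inference fails outright: with $\varphi(s)=s^2-s$, $D_\varphi=(1,\infty)$, $\lambda=2$, the exit point ${\rm u}_\tau=1\le\lambda$ satisfies $\varphi({\rm u}_\tau)=0$, not $>0$. Forward invariance of $D_\varphi$ is a dynamical fact (the flow slows down precisely as $\varphi_i({\rm u}_t)\to 0$, so the boundary is reached only as $t\to\infty$), and no pointwise comparison of $\varphi_i$ at the exit time can substitute for it.

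The paper's proof takes an entirely different and genuinely necessary route: it writes $\tfrac{\partial}{\partial t}u^{(i)}_t(\lambda)$ via the semigroup property and the mean value theorem as a nonnegative combination of the spatial derivatives $\tfrac{\partial}{\partial\lambda_j}u^{(i)}_t(\lambda)$, and then invokes the strict positivity of these derivatives — equivalently the strict positivity of $\mathbb E_{{\rm e}_j}[Z^{(i)}_t]$, which comes from irreducibility via \cite{blp,bp}. This shows that $\tfrac{\partial}{\partial t}u^{(i)}_t(\lambda)$ has the \emph{same sign} as its value at $t=0$, namely $-\varphi_i(\lambda)<0$, for all $t$; hence $t\mapsto u^{(i)}_t(\lambda)$ is decreasing and, by $(S_\varphi)$, ${\rm u}_t(\lambda)$ stays in $D_\varphi$. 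Your Step 3 (the passage to $\lambda\to\infty$) would be fine once Step 1 is in place, but as written the proof of Step 1 needs to be replaced by a dynamical/probabilistic argument of the sort the paper uses.
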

\begin{proof}
Let $\lambda\in D_{\varphi}$. Then for all $i\in[d]$,   
$\dfrac{\partial}{\partial t}u^{(i)}_{t}(\lambda)|_{t=0}=-\varphi_{i}(\lambda)<0$ so that for all $h$ small 
enough, $u_{h}^{(i)}(\lambda)<\lambda_i=u_{0}^{(i)}(\lambda)$. That is 
$[{\rm u}_{h}(\lambda),\lambda]:=\prod_{i\in[d]}[u_h^{(i)}(\lambda),\lambda_i]\neq\emptyset$.
Now let us fix $i\in[d]$. Then thanks to the theorem of finite increments, for all $h\geq 0$ small enough, 
there exists $\lambda'\in [{\rm u}_{h}(\lambda),\lambda]$ such that
\[u^{(i)}_{t}({\rm u}_{h}(\lambda))-u^{(i)}_{t}(\lambda)=u_{t+h}^{(i)}(\lambda)-u^{(i)}_{t}(\lambda)
=\sum\limits_{j=1}^{d} \left(u^{(j)}_{h}(\lambda)-\lambda_{j}\right) \dfrac{\partial}{\partial 
\lambda_{j}}u^{(i)}_{t}(\lambda')\,,\ \]
where we have used the semigroup property of ${\rm u}_t(\lambda)$ in the first equality. Then making $h$ tend 
to zero, we obtain
\begin{eqnarray*}
\lim\limits_{h\rightarrow 0} \dfrac{u_{t+h}^{(i)}(\lambda)-u^{(i)}_{t}(\lambda)}{u^{(i)}_{h}
(\lambda)-\lambda_{i}}&=& \dfrac{\partial}{\partial \lambda_{i}}u^{(i)}_{t}(\lambda)+\\
&&\sum\limits_{j\neq i}\dfrac{\partial}{\partial t}u^{(j)}_{t}(\lambda)|_{t=0}
\left(\dfrac{\partial}{\partial t}u^{(i)}_{t}(\lambda)|_{t=0}\right)^{-1} \dfrac{\partial}
{\partial \lambda_{j}}u^{(i)}_{t}(\lambda)\\
&=&\dfrac{\partial}{\partial \lambda_{i}}u^{(i)}_{t}(\lambda)+\sum\limits_{j\neq i}
\dfrac{\varphi_j(\lambda)}{\varphi_i(\lambda)}\dfrac{\partial}
{\partial \lambda_{j}}u^{(i)}_{t}(\lambda).
\end{eqnarray*}
This last quantity is positive. Indeed, it follows from Lemma 3.4 in \cite{blp} and part $(ii)$ of Lemma A.1  
in \cite{bp}, that $\e_{e_i}(Z_t^{(j)})>0$, for all $t\ge0$ and all $i,j\in[d]$. (In \cite{blp} it is 
actually assumed that $X^{i,j}$, $i,j\in[d]$ are integrable but our result can easily be extended to
the general case.) Therefore, for every $i,j\in[d]$, $\lambda_{j}\mapsto u_{t}^{(i)}(\lambda)$ is increasing. 
Moreover, since $\lambda\in D_\varphi$, $\varphi_j(\lambda)>0$, for all $j\in[d]$. This yields,
\begin{align*}
\lim\limits_{h\rightarrow 0} \dfrac{u^{(i)}_{t+h}(\lambda)-u^{(i)}_{t}
	(\lambda)}{u^{(i)}_{h}(\lambda)-\lambda_{i}}
	&=\dfrac{\partial}{\partial t}u^{(i)}_{t}(\lambda)
	\left( \dfrac{\partial}{\partial t}u^{(i)}_{t}(\lambda)|_{t=0}\right)^{-1}>0.
\end{align*}
Thus $\dfrac{\partial}{\partial t}u^{(i)}_{t}(\lambda)$ has the same sign as $\dfrac{\partial}{\partial t}
u^{(i)}_{t}(\lambda)|_{t=0}=-\varphi_{i}(\lambda)<0$ since $\lambda\in D_{\varphi}$. 
In conclusion, it follows from the differential system $(S_\varphi)$ satisfied by ${\rm u}$ and $\varphi$, that 
for all $\lambda\in D_{\varphi}$, $t\geq 0$, and $i\in[d]$
\[\dfrac{\partial}{\partial t}u^{(i)}_{t}(\lambda)
	=-\varphi_{i}({\rm u}_{t}(\lambda))<0,\]
that is
${\rm u}_{t}(\lambda)\in D_{\varphi}$ and for all $i\in[d]$, $t\mapsto u^{(i)}_{t}(\lambda)$ is decreasing.

The last assertion is a consequence of the previous one, but it is also straightforward from (\ref{2782}).\\
\end{proof}

\begin{lemma}\label{asympu}
Assume that none of the processes $X^{i,j}$ for $i,j\in[d]$ such that $i\neq j$ is a compound Poisson process.  
Then either for all $t>0$ and $i\in[d]$, $u^{(i)}_{t}(\infty)=\infty$, 
or for all $t>0$ and $i\in[d]$, $u^{(i)}_{t}(\infty)<\infty$.
\end{lemma}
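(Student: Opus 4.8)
The plan is to read the claim as a purely analytic statement about the vectors ${\rm b}_t:={\rm u}_t(\infty)\in[0,\infty]^d$, $t>0$. First I would collect the facts I need: by $(\ref{expLaplaceZ})$ the map $\lambda\mapsto{\rm u}_t(\lambda)$ is nondecreasing coordinatewise, so ${\rm b}_t=\sup_\lambda{\rm u}_t(\lambda)$ is well defined; each $t\mapsto b^{(i)}_t$ is nonincreasing by Lemma~\ref{u.D}; and, letting $\lambda\to\infty$ in the semigroup identity ${\rm u}_{t+s}(\lambda)={\rm u}_t({\rm u}_s(\lambda))$ and using the monotonicity of ${\rm u}_t$, one obtains the only form of ``semigroup at infinity'' needed below:
\[\text{if every coordinate of }{\rm b}_s\text{ is infinite, then }{\rm b}_{s+t}={\rm b}_t\text{ for all }t\ge0.\]
Set $F_t:=\{i\in[d]:b^{(i)}_t<\infty\}$. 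By monotonicity in $t$, $F_t$ is nondecreasing, hence constant on each of finitely many intervals partitioning $(0,\infty)$, and the lemma is equivalent to $F_t\equiv\emptyset$ or $F_t\equiv[d]$. Suppose this fails; I would then distinguish two cases.

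In \emph{Case $(ii)$}, every constancy value of $F$ is $\emptyset$ or $[d]$ (equivalently, $F$ attains an intermediate value only at its finitely many jump times); since $F$ is nondecreasing and, by assumption, non-constant, there is $\tau\in(0,\infty)$ with $F_t=\emptyset$ for $t<\tau$ and $F_t=[d]$ for $t>\tau$. For $0<\varepsilon<\tau/2$, every coordinate of ${\rm b}_{\tau-\varepsilon}$ is infinite, so the displayed relation gives ${\rm b}_{\tau+\varepsilon}={\rm b}_{(\tau-\varepsilon)+2\varepsilon}={\rm b}_{2\varepsilon}$; but all coordinates of ${\rm b}_{\tau+\varepsilon}$ are finite while all of those of ${\rm b}_{2\varepsilon}$ are infinite, a contradiction.

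In \emph{Case $(i)$}, on some nondegenerate interval $(a,b)$ the constant value $G:=F_t$, $t\in(a,b)$, satisfies $\emptyset\neq G\neq[d]$; this is the core of the proof. Since $M$ is irreducible there are $i_0\in G$ and $j_0\in[d]\setminus G$ with $m_{j_0i_0}\neq0$, hence $X^{j_0,i_0}\not\equiv0$, so by hypothesis $X^{j_0,i_0}$ is a subordinator that is not compound Poisson; consequently $\mathbb{P}(X^{j_0,i_0}_t=0)=0$ for all $t>0$, and therefore, for every fixed $\alpha\ge0$, $\varphi_{i_0}(\alpha{\rm e}_{i_0}+\beta{\rm e}_{j_0})=\frac1t\log\mathbb{E}[e^{-\alpha X^{i_0,i_0}_t-\beta X^{j_0,i_0}_t}]\to-\infty$ as $\beta\to\infty$. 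Now fix $s_1<s_2$ in $(a,b)$ and integrate the $i_0$-th equation of the system $(S_{\varphi})$:
\[u^{(i_0)}_{s_2}(\lambda)-u^{(i_0)}_{s_1}(\lambda)=-\int_{s_1}^{s_2}\varphi_{i_0}\big({\rm u}_r(\lambda)\big)\,dr,\qquad\lambda\in\mathbb{R}_+^d;\]
then let $\lambda\to\infty$. The left-hand side tends to $b^{(i_0)}_{s_2}-b^{(i_0)}_{s_1}\in\mathbb{R}$, both terms being finite as $i_0\in G$. For the right-hand side set $C:=b^{(i_0)}_{s_1}<\infty$, so that $u^{(i_0)}_r(\lambda)\le b^{(i_0)}_r\le C$ and $u^{(j_0)}_r(\lambda)\uparrow b^{(j_0)}_r=\infty$ for each $r\in[s_1,s_2]$. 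Since $\varphi_{i_0}$ is nonincreasing in every coordinate $\neq i_0$, on one hand $\varphi_{i_0}({\rm u}_r(\lambda))\le\varphi_{i_0}(u^{(i_0)}_r(\lambda){\rm e}_{i_0})=\tilde{\varphi}_{i_0}(u^{(i_0)}_r(\lambda))\le\sup_{[0,C]}\tilde{\varphi}_{i_0}<\infty$ (a majorant uniform in $r,\lambda$, cf.~$(\ref{2526})$), and on the other hand $\varphi_{i_0}({\rm u}_r(\lambda))\le\sup_{0\le\alpha\le C}\varphi_{i_0}(\alpha{\rm e}_{i_0}+u^{(j_0)}_r(\lambda){\rm e}_{j_0})$, which tends to $-\infty$ as $\lambda\to\infty$ by a short compactness argument (the continuous maps $\alpha\mapsto\varphi_{i_0}(\alpha{\rm e}_{i_0}+\beta{\rm e}_{j_0})$ decrease to $-\infty$ on the compact $[0,C]$ as $\beta\to\infty$). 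By the reverse Fatou lemma the integral then tends to $-\infty$, so the right-hand side tends to $+\infty$, contradicting the finite limit of the left-hand side. Hence neither case occurs, and the dichotomy holds.

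I expect the estimate in Case $(i)$ to be the real obstacle: one has to control $\varphi_{i_0}({\rm u}_r(\lambda))$ as $\lambda\to\infty$ when several coordinates blow up at once, which requires combining the monotonicity of $\varphi_{i_0}$ in the off-diagonal directions, the precise meaning of ``$X^{j_0,i_0}$ not compound Poisson'' (so that $\varphi_{i_0}(\lambda)\to-\infty$ as $\lambda_{j_0}\to\infty$), and a domination in order to pass the limit under the integral. A minor, essentially bookkeeping, difficulty is to arrange the case split on the monotone set-valued map $t\mapsto F_t$ so that $(i)$ and $(ii)$ are exhaustive, and to check the ``semigroup at infinity'' identity used in Case $(ii)$.
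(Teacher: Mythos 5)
Your proposal is correct, and it follows essentially the same two-step strategy as the paper: a per-$t$ dichotomy (for each $t>0$, either all coordinates of ${\rm u}_t(\infty)$ are finite or all are infinite, via irreducibility and the non-compound-Poisson hypothesis), followed by a temporal dichotomy (the transition time must be $0$ or $\infty$, via the semigroup identity). Your Case $(ii)$ is the paper's second step almost verbatim. The difference lies in the first step. The paper's version is shorter: fixing a single $t$ and letting $\lambda\to\infty$ in $D_\varphi$, Lemma~\ref{u.D} gives $\varphi_j({\rm u}_t(\lambda))>0$, while the monotonicity of $\varphi_j$ in the off-diagonal coordinates, the blow-up of $u^{(i)}_t(\lambda)$ and the boundedness of $u^{(j)}_t(\lambda)$ force $\varphi_j({\rm u}_t(\lambda))\to-\infty$; this is an immediate contradiction with no integration needed. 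You instead integrate $(S_\varphi)$ over an interval $[s_1,s_2]$ and invoke reverse Fatou plus a Dini-type compactness argument to show the integral diverges — longer, but equally valid, and it has the small advantage of never having to interpret $\varphi_j$ at a vector with infinite coordinates (the paper glosses over this by writing $\varphi_j({\rm u}_t(\infty))=-\infty$). Your choice of $i_0\in G$, $j_0\notin G$ with $m_{j_0 i_0}\neq0$ is the right one (it is $\varphi_{i_0}$ that one wants to push to $-\infty$ via its $\lambda_{j_0}$-dependence, i.e.\ via $X^{j_0,i_0}$), and the case split on the monotone set-valued map $t\mapsto F_t$ is exhaustive. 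In short: same ingredients, slightly heavier machinery in one step.
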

\begin{proof}
Assume that there exists $t>0$ such that $u^{(i)}_{t}(\infty)=\infty$ for some $i\in[d]$. Let 
$j$ such that $j\neq i$ and $X^{i,j}$ is not identically equal to 0. Such an index exists since ${\bf X}$ 
is irreducible. Assume moreover that $u^{(j)}_{t}(\infty)<\infty$. Since $X^{i,j}$ is not a compound Poisson 
process, $\varphi_j$ tends to $-\infty$ on the coordinate $i$. Therefore, since $u^{(i)}_{t}(\infty)=\infty$, 
we have $\varphi_{j}({\rm u}_{t}(\infty))=-\infty$ and this 
contradicts the fact that ${\rm u}_{t}(\infty)\in \overline{D}_\varphi$ following from Lemma \ref{u.D}. In other 
words, for all $j\in[d]$ such that $j\neq i$ and $X^{i,j}$ is not identically equal to 0, 
$u^{(j)}_{t}(\infty)=\infty$. By irreducibility, we deduce that for all $j\in[d]$, $u^{(j)}_{t}(\infty)=\infty$.
We have proved that, if there exists $t>0$ and $i\in[d]$ such that $u^{(i)}_{t}(\infty)<\infty$, then for 
all $j\in[d]$, $u^{(j)}_{t}(\infty)<\infty$. Moreover from Lemma \ref{u.D}, for all $s\geq t$ and $j\in[d]$, 
$u^{(j)}_{s}(\infty)<\infty$.

Now let $\tau^{(i)}=\inf\{s\geq 0: u^{(i)}_{s}(\infty)<\infty\}$. Thanks to previous arguments, $\tau=\tau^{(i)}$ 
does not depend on $i\in[d]$. Assume $\tau\in(0,\infty)$.
For $0<s<\tau$ and $\tau-s<t<\tau$, for all $i\in[d]$ and $\lambda\in D_{\varphi}$,
\[ u^{(i)}_{t+s}(\lambda) 
	= u^{(i)}_{t}({\rm u}_{s}(\lambda))
	\underset{\lambda\rightarrow +\infty}{\rightarrow} u^{(i)}_{t}(\infty) = \infty\,.\ \]	
We obtain a contradiction since $u^{(i)}_{t+s}(\infty)<\infty$.
\end{proof}

\noindent {\it Proof of Theorem $\ref{5003}$}: It has already been proved in Lemma \ref{asympu} that 
either for all $t\geq 0$ and $i\in[d]$, $u^{(i)}_{t}(\infty)=\infty$ or for all $t>0$ and $i\in[d]$, 
$u^{(i)}_{t}(\infty)<\infty$. Now recall from (\ref{2782}) that 
$\p_{\rm r}({\rm Z}_t=\mathbf{0})=e^{-\langle {\rm r},{\rm u}_t(\infty)\rangle}$. Hence if
for all $t\geq 0$ and $i\in[d]$, $u^{(i)}_{t}(\infty)=\infty$, then for all $t\geq 0$ and 
${\rm r}\in\mathbb{R}^{d}_{+}\setminus\{\mathbf{0}\}$, $\p_{\rm r}({\rm Z}_t=\mathbf{0})=0$, so that for 
all ${\rm r}\in\mathbb{R}^{d}_{+}\setminus\{\mathbf{0}\}$, 
$q_{\rm r}=\lim\limits_{t\rightarrow\infty}\p_{\rm r}({\rm Z}_t=\mathbf{0})=0$.

Again from (\ref{2782}), if for all $t\geq 0$ and $i\in[d]$, $u^{(i)}_{t}(\infty)<\infty$, then for all 
$t\geq 0$ and ${\rm r}$, 
$\p_{\rm r}({\rm Z}_t=\mathbf{0})=e^{-\langle{\rm r},{\rm u}_{t}(\infty)\rangle}>0$ and since all coordinates 
of $t\mapsto {\rm u}_t(\infty)$ are decreasing, $q_{\rm r}\in(0,1]$. 
Let us prove that in this case, $q_{\rm r}=e^{-\langle {\rm r},\phi({\bf 0})\rangle}$. From (\ref{5627}) 
and (\ref{2782}) this equality is equivalent to 
\begin{equation}\label{0801}
{\rm k}=(k^{(1)},\dots,k^{(d)}):=\lim_{s\rightarrow\infty}{\rm u}_{s}(\infty)=\phi({\bf 0}). 
\end{equation}
In order to prove (\ref{0801}), we will use the semi-group property. That is for all $i\in[d]$, $s,t\geq 0$ 
and $\lambda\in\mathbb{R}^{d}_{+}$,
\begin{equation}\label{limsemigp}
u_{t+s}^{(i)}(\lambda)=u_{t}^{(i)}({\rm u}_{s}(\lambda)).
\end{equation}
Recall that for all $s>0$, ${\rm u}_{s}(\infty)\in\mathbb{R}^{d}_{+}$. Then using  
(\ref{limsemigp}) and making $\lambda$ tend to infinity, we obtain by the 
continuity of $u_{t}^{(i)}$,
\begin{equation}\label{5108}
u_{t+s}^{(i)}(\infty)=u_{t}^{(i)}({\rm u}_{s}(\infty))\,.
\end{equation}
Recall from (\ref{0801}) the definition of ${\rm k}$. Since the mappings $s\mapsto u^{(i)}_{s}(\infty)$, 
$i\in[d]$ are decreasing, see (\ref{2782}), we have ${\rm k}\in\mathbb{R}_+^d$, so that
from (\ref{5108}) and by continuity of $u_{t}^{(i)}$ again, we obtain when $s$ tends to $\infty$ that 
for all $i\in[d]$,
\begin{equation}\label{5208}
k^{(i)}=u_{t}^{(i)}({\rm k})\,.
\end{equation}
Therefore $t\mapsto {\rm u}_{t}({\rm k})$ is constant, so that from 
$(S_{\varphi})$, $\varphi({\rm k})={\bf 0}$ and hence from part 3.~of Theorem \ref{propTr},
${\rm k}\in\{\mathbf{0},\phi(\mathbf{0})\}$.
Moreover, thanks to Lemma $\ref{u.D}$, for all $\lambda\in D_{\varphi}$ and $t\geq 0$, 
${\rm u}_{t}(\lambda)\in D_{\varphi}$, thus ${\rm k}\in \overline{D}_{\varphi}$.
Then it is proved at the beginning of the proof of Theorem 3.4 in \cite{cma1} that $\phi({\bf 0})$ is the only 
solution of the equation $\varphi(\lambda)={\bf 0}$ in $\overline{D}_\varphi$. This, shows that 
${\rm k}=\phi(\mathbf{0})$ and we conclude that $q_{\rm r}=e^{-\langle {\rm r},\phi({\bf 0})\rangle}$. 
$\quad\Box$

\section{Proof of Theorem \ref{4588}}\label{proofs.Grey.M}

For $i\in[d]$, let us call \emph{the $i^{th}$ diagonal branching process}, the continuous state branching process
${\tilde{Z}^{(i)}}$ which is solution of the one-dimensional Lamperti representation,
\begin{equation}\label{Lampertidiagonale}
\tilde{Z}^{(i)}_{t} = r_{i} + X^{i,i}\left(\int\limits_{0}^{t} \tilde{Z}^{(i)}_{s}ds\right), \;\;\; t\geq 0.
\end{equation}
Its branching mechanism corresponds to the Laplace exponent $\tilde{\varphi}_{i}$ of $X^{i,i}$ whose definition 
has been given in (\ref{2526}), see also (\ref{1736}) below.
Then the Laplace exponent $\tilde{u}^{(i)}$ of $\tilde{Z}^{(i)}$ satisfies for all $\alpha\geq 0$,
\[ \left\lbrace\begin{array}{ll}
	\dfrac{\partial}{\partial t} \tilde{u}^{(i)}_{t}(\alpha) &= -\tilde{\varphi}_{i}(\tilde{u}^{(i)}_{t}(\alpha))
	\\ \tilde{u}^{(i)}_{0}(\alpha) &= \alpha.
	\end{array}\right. \]
Extinction at a finite time of the $i^{th}$ diagonal branching process is characterized through Grey's condition 
for $\tilde{\varphi}_{i}$, see $(G_{\varphi}^{(i)})$ right after (\ref{2526}).\\

Let us start with the proof of part 1.~of Theorem \ref{4588}. Let $\varphi$ be the Laplace exponent of some spaLf 
satisfying  $(G^{(i)}_{\varphi})$, for all $i\in[d]$. We will show that $u^{(i)}_t(\infty)<\infty$, for all 
$t\ge0$ and $i\in[d]$ and use Theorem \ref{5003}. For this purpose, we need the following series of Lemmas.

We first show that in the case $d=1$, for some functions that are more general than Laplace exponents of  
spectrally positive L\'evy processes, the system $(S_\varphi)$ defined in Subsection \ref{9522} still 
admits a unique solution with nice properties.

\begin{lemma}\label{9525}
Let $s_0\ge0$ and $f:[s_0,\infty)\rightarrow[0,\infty)$ be some continuous, increasing function such that 
$f(s_0)=0$ and $\int_{s_0}ds/f(s)=\infty$. Fix $\delta>s_0$ and let 
\[F(x):=\int_\delta^x\frac{ds}{f(s)}\,,\;\;\;x\in(s_0,\infty).\]
Set $F(\infty):=\lim\limits_{x\rightarrow\infty}F(x)$. Then $F:(s_0,\infty)\rightarrow(-\infty,F(\infty))$ is a 
bijection. Let us denote by 
$F^{-1}:(-\infty,F(\infty))\rightarrow(s_0,\infty)$ its inverse. Then for all $\lambda>s_0$, $v_t(\lambda)=F^{-1}(F(\lambda)-t)$, $t\ge0$ is the unique solution of the differential equation
\begin{equation}\label{5636}
\left\{\begin{array}{ll}
&\frac{\partial}{\partial_t}v_t(\lambda)=-f(v_t(\lambda))\,,\;\;\;t>0,\\
&u_0(\lambda)=\lambda\,.
\end{array}\right.
\end{equation}
For every $\lambda>s_0$, the function $t\mapsto v_t(\lambda)$ is decreasing and $\lim\limits_{t\rightarrow\infty}v_t(\lambda)=s_0$. For every $t\ge0$, the function $\lambda\mapsto v_t(\lambda)$ is increasing.
Moreover, $v_t(\infty):=\lim\limits_{\lambda\rightarrow\infty}v_t(\lambda)<\infty$, for all 
$t>0$ if and only if $\int_{\delta}^\infty ds/f(s)<\infty$.
\end{lemma}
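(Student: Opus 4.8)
The plan is to treat this as a one-dimensional autonomous ODE and separate variables. First I would establish that $F:(s_0,\infty)\rightarrow(-\infty,F(\infty))$ is a bijection: since $f>0$ and is continuous on $(s_0,\infty)$, the integrand $1/f$ is continuous and positive, so $F$ is $C^1$ and strictly increasing; the hypothesis $\int_{s_0}ds/f(s)=\infty$ forces $F(x)\to-\infty$ as $x\downarrow s_0$, while by definition $F(x)\to F(\infty)$ as $x\to\infty$, where $F(\infty)\in(0,\infty]$. Strict monotonicity plus continuity and these two limits give bijectivity onto $(-\infty,F(\infty))$, and the inverse $F^{-1}$ is $C^1$ with $(F^{-1})'=f\circ F^{-1}$.

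Next I would verify that $v_t(\lambda):=F^{-1}(F(\lambda)-t)$ solves (\ref{5636}). For this to make sense we need $F(\lambda)-t\in(-\infty,F(\infty))$ for all $t\ge0$, which holds because $F(\lambda)<F(\infty)$ and subtracting $t\ge0$ only decreases the argument, keeping it below $F(\infty)$ and above $-\infty$. Differentiating, $\frac{\partial}{\partial t}v_t(\lambda)=-(F^{-1})'(F(\lambda)-t)=-f(F^{-1}(F(\lambda)-t))=-f(v_t(\lambda))$, and $v_0(\lambda)=F^{-1}(F(\lambda))=\lambda$. Uniqueness: if $w_t$ is another solution, then as long as $w_t>s_0$ we have $\frac{d}{dt}F(w_t)=\dot w_t/f(w_t)=-1$, so $F(w_t)=F(\lambda)-t$, hence $w_t=F^{-1}(F(\lambda)-t)=v_t(\lambda)$; one just needs to note that a solution cannot reach $s_0$ in finite time (again because $\int_{s_0}ds/f(s)=\infty$ means the separated integral diverges near $s_0$), so the argument is valid for all $t\ge0$.

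Then the monotonicity and limit statements follow readily. Since $F^{-1}$ is increasing and $t\mapsto F(\lambda)-t$ is decreasing, $t\mapsto v_t(\lambda)$ is decreasing; its limit as $t\to\infty$ is $F^{-1}$ applied to the limit $F(\lambda)-t\to-\infty$, which is $s_0$ by the first step. For fixed $t$, $\lambda\mapsto v_t(\lambda)$ is increasing because $F$ and $F^{-1}$ are both increasing and $\lambda\mapsto F(\lambda)-t$ is increasing. Finally, for the characterization of $v_t(\infty)$: as $\lambda\to\infty$, $F(\lambda)\to F(\infty)$, so $v_t(\lambda)=F^{-1}(F(\lambda)-t)\to F^{-1}(F(\infty)-t)$ by continuity of $F^{-1}$, provided $F(\infty)-t$ lies in the domain $(-\infty,F(\infty))$, i.e.\ provided $t>0$. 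This limit is finite (an actual point of $(s_0,\infty)$) exactly when $F(\infty)-t<F(\infty)$, which is automatic, and requires $F(\infty)-t>-\infty$, always true — but the point is that $F^{-1}$ is only defined at finite values, so $v_t(\infty)<\infty$ for all $t>0$ iff $F(\infty)=\int_\delta^\infty ds/f(s)+F(\delta)$ being finite is what makes $F(\infty)-t$ stay a genuine real number; more precisely, if $F(\infty)=\infty$ then $F(\lambda)-t\to\infty$ and $v_t(\lambda)\to\infty$, whereas if $F(\infty)<\infty$ then $v_t(\infty)=F^{-1}(F(\infty)-t)<\infty$. Since $F(\infty)<\infty$ is equivalent to $\int_\delta^\infty ds/f(s)<\infty$, this is the claimed equivalence.

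The only mildly delicate point — the main obstacle — is the uniqueness argument at the boundary $s_0$: one must argue that no solution of (\ref{5636}) can hit $s_0$ in finite forward time, so that the separation-of-variables computation $F(w_t)=F(\lambda)-t$ is legitimate for all $t\ge0$ rather than only on a maximal subinterval. This follows from $\int_{s_0}ds/f(s)=\infty$ together with a standard comparison/escape-time argument, but it is worth spelling out. Everything else is routine calculus once $F$ and $F^{-1}$ are set up.
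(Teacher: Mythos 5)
Your proof is correct and follows essentially the same route as the paper: set up $F$ as a strictly increasing bijection, verify the explicit formula by differentiation, obtain uniqueness by separation of variables (the identity $\int_{v_t(\lambda)}^\lambda ds/f(s)=t$), and read off the monotonicity, the limit $s_0$, and the finiteness of $v_t(\infty)$ from whether $F(\infty)$ is finite. The one place you add something is the explicit remark that a solution cannot reach $s_0$ in finite time, which the paper leaves implicit; this is a genuine point since $f$ is only continuous, and it is good that you spelled it out.
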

\begin{proof} It is plain that $F:(s_0,\infty)\rightarrow(-\infty,F(\infty))$ is a bijection.
Let $\lambda>s_0$. Then the function $t\mapsto F^{-1}(F(\lambda)-t)$ is clearly differentiable and 
we readily check that it satisfies (\ref{5636}). Uniqueness of the solution is given by integrating the function 
$t\mapsto -\frac{\partial}{\partial_t}v_t(\lambda)/f(v_t(\lambda))$ through an obvious change of variables 
allowing us to write
\begin{equation}\label{9299}
\int_{v_t(\lambda)}^\lambda\frac{ds}{f(s)}=t\,,
\end{equation}
and the expression of $v_t$ follows. The facts that the function $t\mapsto v_t(\lambda)$ is decreasing, that
$\lim\limits_{t\rightarrow\infty}v_t(\lambda)=s_0$ and that the function $\lambda\mapsto v_t(\lambda)$ is 
increasing are straightforward. Finally (\ref{9299}) implies that for every $t>0$, 
$v_t(\infty)<\infty$ if and only if $\int_{\delta}^\infty ds/f(s)<\infty$.
\end{proof}

\noindent  Recall from (\ref{1736}) that $\varphi_{ij}$ is the Laplace exponent of the L\'evy process 
$X^{i,j}$ and that $\varphi_{ii}=\tilde{\varphi}_{i}$ according to our notation.
Then let us consider the following mappings: 
\begin{eqnarray*}
\bar{\varphi}:=\min\{\tilde{\varphi}_{i},\,i\in[d]\}\,,\;\;\bar{\bar{\varphi}}:=\sum_{k\neq l}\varphi_{kl}\,,\;\;
\varphi^*_i(\lambda):=\bar{\varphi}(\lambda_i)+\sum_{j\neq i}\bar{\bar{\varphi}}(\lambda_j).
\end{eqnarray*}
\noindent Let us define the diagonal on $\mathbb{R}_+^d\setminus\{\mathbf{0}\}$ by 
$\Delta=\{\lambda\in\mathbb{R}_+^d\setminus\{\mathbf{0}\}:\lambda_1=\lambda_2=\dots=\lambda_d\}$.

\begin{lemma}\label{8116}
Recall that the assumptions $(G^{(i)}_\varphi)$, $i\in[d]$ are in force. Let $s_0$ be the largest solution of
the equation $\bar{\varphi}(x)+(d-1)\bar{\bar{\varphi}}(x)=0$. Then for all $\lambda\in\Delta$ with 
$\lambda_1>s_0$, the differential system 
\[ (S_{\varphi^*})\;\;\qquad\;
\left\lbrace
		\begin{array}{ll}
			\dfrac{\partial}{\partial t} v^{(i)}_{t}(\lambda)  & = -\varphi^*_{i}({\rm v}_{t}(\lambda))
			\\[0.3cm] v^{(i)}_{0}(\lambda)&=\lambda_{i}
		\end{array}
\;\;\;,\;\;i\in[d]\,,\;\;t\ge0
	\right.\]
admits as a solution the mapping $t\mapsto {\rm v}_t(\lambda)$ given by 
\[v_t^{(1)}(\lambda)=\dots=v_t^{(d)}(\lambda)=F^{-1}(F(\lambda)-t),\] 
for all $t\ge0$, where $F$ is given as in Lemma $\ref{9525}$, with 
$f(x):=\bar{\varphi}(x)+(d-1)\bar{\bar{\varphi}}(x)$, $x\ge s_0$.
\end{lemma}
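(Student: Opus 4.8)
The plan is to solve $(S_{\varphi^*})$ on $\Delta$ by a symmetry reduction. The key point is that for every $\mu\ge0$ and every $i\in[d]$ one has $\varphi^*_i(\mu,\dots,\mu)=\bar{\varphi}(\mu)+(d-1)\bar{\bar{\varphi}}(\mu)=f(\mu)$, the \emph{same} value for all $i$; so it is natural to look for a solution whose $d$ coordinates coincide with a single scalar function $w_t$. For such an ansatz the $d$ equations of $(S_{\varphi^*})$ all collapse to the one-dimensional equation $\partial_t w_t=-f(w_t)$, $w_0=\lambda_1$, which is exactly equation $(\ref{5636})$ of Lemma \ref{9525} for the present $f$. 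Granting that Lemma \ref{9525} applies to $f$ on $[s_0,\infty)$, $w_t:=F^{-1}(F(\lambda_1)-t)$ is well defined for all $t\ge0$, stays in $(s_0,\lambda_1]$, and solves $\partial_t w_t=-f(w_t)$, $w_0=\lambda_1$. Setting $v^{(i)}_t(\lambda):=w_t$ for each $i\in[d]$ gives ${\rm v}_t(\lambda)=(w_t,\dots,w_t)$ and, for every $i$,
\[\frac{\partial}{\partial t}v^{(i)}_t(\lambda)=\dot{w}_t=-f(w_t)=-\big(\bar{\varphi}(w_t)+(d-1)\bar{\bar{\varphi}}(w_t)\big)=-\varphi^*_i({\rm v}_t(\lambda)),\]
together with $v^{(i)}_0(\lambda)=w_0=\lambda_1=\lambda_i$ since $\lambda\in\Delta$; hence $t\mapsto{\rm v}_t(\lambda)$ solves $(S_{\varphi^*})$, which is the claim. (Note that only the existence of a solution is asserted, so no uniqueness for the full $d$-dimensional system is needed.)

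So the real work is to check that $f(\mu)=\bar{\varphi}(\mu)+(d-1)\bar{\bar{\varphi}}(\mu)$, restricted to $[s_0,\infty)$, satisfies the hypotheses of Lemma \ref{9525}. Continuity is immediate ($\bar{\varphi}$ is a finite minimum, $\bar{\bar{\varphi}}$ a finite sum, of Laplace exponents, all continuous on $\mathbb{R}_+$), and $f(s_0)=0$ is the definition of $s_0$. To see $f$ is $[0,\infty)$-valued on $[s_0,\infty)$: $f(0)=0$ because every $\varphi_i(\mathbf 0)=-\alpha_i=0$ by conservativeness, while $f(\mu)\to+\infty$ as $\mu\to\infty$, since $\bar{\varphi}(\mu)/\mu=\min_i\tilde{\varphi}_i(\mu)/\mu\to\infty$ (under $(H_\varphi)$ no $X^{i,i}$ is a subordinator) whereas $|\bar{\bar{\varphi}}(\mu)|=O(\mu)$, each off-diagonal $\varphi_{kl}$ being a subordinator exponent and hence bounded below by an affine function; thus the zero set of $f$ in $[0,\infty)$ is a non-empty compact set whose maximum is $s_0$, and $f>0$ on $(s_0,\infty)$. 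For monotonicity, fix $\mu\in(s_0,\infty)$ and let $i$ be an index with $\tilde{\varphi}_i(\mu)=\bar{\varphi}(\mu)$; since $\tilde{\varphi}_i$ is convex with $\tilde{\varphi}_i(0)=0$, its derivative at $\mu$ is at least the secant slope, so the one-sided derivatives of $\bar{\varphi}$ at $\mu$ are $\ge\bar{\varphi}(\mu)/\mu$, while $-\bar{\bar{\varphi}}$ is concave with value $0$ at $0$, giving $|\bar{\bar{\varphi}}'(\mu)|\le|\bar{\bar{\varphi}}(\mu)|/\mu$; hence $f'(\mu)\ge\big(\bar{\varphi}(\mu)-(d-1)|\bar{\bar{\varphi}}(\mu)|\big)/\mu=f(\mu)/\mu>0$, so $f$ is increasing on $[s_0,\infty)$.

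The delicate point — the one I expect to cost the most — is the condition $\int_{s_0}d\mu/f(\mu)=\infty$; it is essential, since it is precisely what forces $F(s_0{+})=-\infty$ and hence makes $F^{-1}(F(\lambda_1)-t)$ defined for every $t\ge0$. It reduces to an upper bound $f(\mu)\le L(\mu-s_0)$ near $s_0$. If $s_0>0$, all the exponents $\tilde{\varphi}_i$ and $\varphi_{kl}$ are $C^1$ on $(0,\infty)$, so $f$ is locally Lipschitz near $s_0$ and $f(\mu)=f(\mu)-f(s_0)\le L(\mu-s_0)$. If $s_0=0$, one uses instead $f\le\bar{\varphi}\le\tilde{\varphi}_i$ for any fixed $i$ (because $\bar{\bar{\varphi}}\le0$) together with the fact that a convex function vanishing at $0$ is at most linear on every bounded interval, which gives $f(\mu)\le C\mu$ near $0$. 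In both cases $\int_{s_0}d\mu/f(\mu)\ge\int_{s_0}\frac{d\mu}{L(\mu-s_0)}=\infty$, so Lemma \ref{9525} applies and the verification of the first paragraph completes the proof.
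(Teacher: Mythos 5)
Your proof follows the paper's overall strategy exactly: reduce $(S_{\varphi^*})$ to a scalar ODE by the diagonal ansatz, then verify that $f=\bar{\varphi}+(d-1)\bar{\bar{\varphi}}$ on $[s_0,\infty)$ satisfies the hypotheses of Lemma~\ref{9525}. Where you diverge is in how that verification is carried out. The paper introduces the auxiliary functions $f_i:=\tilde{\varphi}_i+(d-1)\bar{\bar{\varphi}}$ and notes that each is itself the Laplace exponent of a spectrally positive, non-subordinator L\'evy process; so each $f_i$ is convex, tends to $+\infty$, has a largest root $s_i$, and is increasing on $[s_i,\infty)$ with $0\le f_i'(s_i)<\infty$. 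Since $f=\min_i f_i$, the monotonicity and $s_0=\max_i s_i$ come for free, and the divergence of $\int_{s_0}ds/f(s)$ follows immediately from $f\le f_i$ and $\int_{s_0}ds/f_i(s)=\infty$ (no case distinction $s_0>0$ vs.\ $s_0=0$ is needed). You instead work directly with $\bar{\varphi}$ and $\bar{\bar{\varphi}}$ separately, using the secant-slope inequalities coming from convexity of $\tilde{\varphi}_i$ and concavity of $-\bar{\bar{\varphi}}$ to get $f'(\mu)\ge f(\mu)/\mu>0$, and then treat the endpoint integral by a local Lipschitz bound. This works, but is noticeably longer; the $f_i$ trick buys exactly the packaging you had to rebuild by hand.

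One small slip to flag: when showing $f(\mu)\to\infty$, you justify $\min_i\tilde{\varphi}_i(\mu)/\mu\to\infty$ by saying that under $(H_\varphi)$ no $X^{i,i}$ is a subordinator. That hypothesis only gives $\tilde{\varphi}_i(\mu)\to\infty$; the superlinear growth $\tilde{\varphi}_i(\mu)/\mu\to\infty$ requires $(G_\varphi^{(i)})$ (a bounded-variation process with negative drift is not a subordinator, yet has $\tilde{\varphi}_i(\mu)/\mu\to c<\infty$). Since $\tilde{\varphi}_i$ is convex with $\tilde{\varphi}_i(0)=0$, the ratio $\tilde{\varphi}_i(\mu)/\mu$ is nondecreasing, and if it stayed bounded then $\int^\infty ds/\tilde{\varphi}_i(s)=\infty$ would follow, contradicting $(G_\varphi^{(i)})$. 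The hypothesis you need is in force, so the conclusion stands, but the parenthetical justification should be corrected.
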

\begin{proof} Let us check that $f$ satisfies the conditions of Lemma \ref{9525}. First note that
$\bar{\varphi}+(d-1)\bar{\bar{\varphi}}$ is clearly continuous on $[0,\infty)$. Set 
$f_i(x):=\tilde{\varphi}_i(x)+(d-1)\bar{\bar{\varphi}}(x)$, $x\ge0$. Then $f_i$ is the characteristic exponent 
of a spectrally positive L\'evy process which is not a subordinator under our assumptions. Therefore, 
$f_i$ is convex with $\lim_{x\rightarrow\infty}f_i(x)=\infty$, and the equation $f_i(x)=0$ has at most two 
roots. Let $s_i$ be the largest of these roots. Then $f_i$ is increasing on $[s_i,\infty)$. Moreover since 
$\bar{\varphi}+(d-1)\bar{\bar{\varphi}}=\min\{f_i,i\in[d]\}$, the point $s_0=\max\{s_i,i\in[d]\}$ 
is the largest root of the equation $\bar{\varphi}(x)+(d-1)\bar{\bar{\varphi}}(x)=0$. Therefore, the function
$f:[s_0,\infty)\rightarrow[0,\infty)$ defined by $f(x):=\bar{\varphi}(x)+(d-1)\bar{\bar{\varphi}}(x)$, $x\ge s_0$ 
is continuous, increasing and satisfies $f(s_0)=0$.

Now let $I\subset[d]$ be the set of indices such that $s_0=s_i$, for all $i\in I$. Then there is $\varepsilon>0$ 
such that $f(x)=\min\{f_i(x),i\in I\}$ for all $x\in[s_0,s_0+\varepsilon]$. Moreover, for all $i\in I$, 
\[\infty=\int_{s_0}^{s_0+\varepsilon}\frac{dx}{f_i(x)}\le\int_{s_0}^{s_0+\varepsilon}\frac{dx}{f(x)}\,,\]
where the equality follows from the fact that $0\le f_i'(s_0)<\infty$.

The result is then a consequence of Lemma \ref{9525} and the fact that for all $\lambda\in\Delta$ such that 
$\lambda_1>s_0$ and for all $i,j\in[d]$, $\varphi^*_i(\lambda)=\varphi^*_j(\lambda)=f(\lambda_1)$. 
(Note also that $t\mapsto{\rm v}_t(\lambda)$ is the only solution such that 
$v_t^{(1)}(\lambda)=\dots=v_t^{(d)}(\lambda)$.) 
\end{proof}

\begin{lemma}\label{7256} 
Assume that $d\ge2$. Then for all $j\in[d]$ and $\lambda\in\mathbb{R}_+^d$,
\[\sum_{i=1}^d\varphi_{ij}(\lambda_i)\le\varphi_j(\lambda).\]
Moreover, for all $j\in[d]$ there exists $i\neq j$ such that $\varphi_{ij}\neq 0$. As a consequence, 
for all $j\in[d]$ and $\lambda\in(\mathbb{R}_+\setminus\{0\})^d$,
\[\varphi_j^*(\lambda)<\varphi_j(\lambda).\]
\end{lemma}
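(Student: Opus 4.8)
I want to prove Lemma \ref{7256}, which has three assertions: (a) the inequality $\sum_{i=1}^d\varphi_{ij}(\lambda_i)\le\varphi_j(\lambda)$ for $\lambda\in\mathbb{R}_+^d$; (b) for each $j$ there is some $i\neq j$ with $\varphi_{ij}\neq0$; and (c) the strict inequality $\varphi_j^*(\lambda)<\varphi_j(\lambda)$ on $(\mathbb{R}_+\setminus\{0\})^d$. I would handle these in order, since (c) follows by combining (a), (b) and the definitions of $\bar\varphi$ and $\bar{\bar\varphi}$.

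For (a), I would work directly from the L\'evy--Khintchine form (\ref{2552}) of $\varphi_j$ together with the defining relation $\varphi_{ij}(s)=\varphi_j(s\,{\rm e}_i)$ from (\ref{1736}). Write $\varphi_j(\lambda)$ out using (\ref{2552}) (recall $\alpha_j=0$ by conservativeness): the linear term is $-\sum_i a_{i,j}\lambda_i$, the Gaussian term is $\frac12 q_j\lambda_j^2$, and the integral term is $-\int_{\mathbb{R}_+^d}(1-e^{-\langle\lambda,{\rm x}\rangle}-\langle\lambda,{\rm x}\rangle 1_{\{|{\rm x}|<1\}})\,\pi_j(d{\rm x})$. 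Evaluating the same expression at $\lambda_i{\rm e}_i$ and summing over $i$, the linear and Gaussian parts reproduce exactly the corresponding parts of $\varphi_j(\lambda)$. So the inequality reduces to a pointwise estimate on the integrand: one must show
\[
\sum_{i=1}^d\bigl(1-e^{-\lambda_i x_i}-\lambda_i x_i 1_{\{|{\rm x}|<1\}}\bigr)\ \ge\ 1-e^{-\langle\lambda,{\rm x}\rangle}-\langle\lambda,{\rm x}\rangle 1_{\{|{\rm x}|<1\}}
\]
for every ${\rm x}\in\mathbb{R}_+^d$, $\lambda\in\mathbb{R}_+^d$; multiplying by $-1$ and integrating against $\pi_j$ then gives (a). This pointwise inequality is the only genuine computation, and it is elementary: the compensator terms $\langle\lambda,{\rm x}\rangle 1_{\{|{\rm x}|<1\}}$ versus $\sum_i\lambda_i x_i1_{\{|{\rm x}|<1\}}$ match exactly (same indicator, linearity of the inner product), so it comes down to $\sum_i(1-e^{-\lambda_i x_i})\ge 1-e^{-\sum_i\lambda_i x_i}=1-\prod_i e^{-\lambda_i x_i}$, which is the standard fact that $1-\prod_i t_i\le\sum_i(1-t_i)$ for $t_i\in[0,1]$ (prove by induction on $d$, or telescoping). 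One subtlety: when $|{\rm x}|<1$ but the individual vectors $\lambda_i{\rm e}_i$ might pick up the compensator at a point ${\rm x}$ where $|{\rm x}|\ge1$ — but no, the indicator is evaluated at the same ${\rm x}$ in both expressions, so there is no mismatch. I should double-check the $q_j$ term is only diagonal (it is, $\frac12 q_j\lambda_j^2$), so it appears in exactly the $i=j$ summand of the left side.

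For (b), I would argue by irreducibility of the mean matrix $M$. Fix $j$. Irreducibility gives, for any $k\neq j$, a chain of indices with nonzero mean-matrix entries linking $k$ to $j$; in particular the last edge of such a chain lands on $j$, so there is some $i\neq j$ with $m_{ij}\neq0$, hence $X^{i,j}$ is not identically zero, hence (being a nonzero subordinator) $\varphi_{ij}\not\equiv0$. Actually I should be slightly careful that $m_{ij}\neq0$ forces $X^{i,j}$ nontrivial: $m_{ij}=\mathbb{E}(X_1^{i,j})\in(-\infty,\infty]$ and for $i\neq j$ this is a subordinator so $m_{ij}>0$ iff it is nonzero; if $m_{ij}=\infty$ it is certainly nonzero. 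Either way $\varphi_{ij}\not\equiv 0$. Then for (c): on $(\mathbb{R}_+\setminus\{0\})^d$ every $\lambda_i>0$, and by definition $\bar\varphi(\lambda_i)=\min_k\tilde\varphi_k(\lambda_i)\le\tilde\varphi_i(\lambda_i)=\varphi_{ii}(\lambda_i)$, while $\bar{\bar\varphi}(\lambda_i)=\sum_{k\neq l}\varphi_{kl}(\lambda_i)\le\sum_{k\neq i}\varphi_{ki}(\lambda_i)$ — wait, I need to match indices to the formula $\varphi_j^*(\lambda)=\bar\varphi(\lambda_j)+\sum_{i\neq j}\bar{\bar\varphi}(\lambda_i)$. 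The point is that $\varphi_j^*(\lambda)\le\tilde\varphi_j(\lambda_j)+\sum_{i\neq j}\bigl(\text{something}\le\varphi_{ij}(\lambda_i)\text{-type bound}\bigr)$; I will need to verify that $\bar{\bar\varphi}\ge$ the relevant off-diagonal exponent so that $\varphi_j^*(\lambda)\le\sum_{i=1}^d\varphi_{ij}(\lambda_i)$, and then (a) gives $\le\varphi_j(\lambda)$, and strictness comes from (b): at least one off-diagonal term is a nonzero subordinator exponent which at a positive argument is strictly negative, creating a strict gap somewhere — more precisely $\bar{\bar\varphi}(\lambda_i)\le\varphi_{ij}(\lambda_i)$ is strict when there is an extra nonzero subordinator in the sum defining $\bar{\bar\varphi}$, or $\bar\varphi<\tilde\varphi_j$ strictly. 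The main obstacle, then, is not the analysis but bookkeeping: getting the chain of inequalities $\varphi_j^*(\lambda)\le\sum_i\varphi_{ij}(\lambda_i)\le\varphi_j(\lambda)$ correctly aligned with the definitions of $\bar\varphi$, $\bar{\bar\varphi}$, $\varphi_j^*$, and then locating where strictness enters using (b). I would also want to remember that earlier in the excerpt this lemma is cited to justify (\ref{5031}), so the inequality directions must be exactly as stated.
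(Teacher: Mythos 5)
Your plan for the first two assertions is exactly the paper's: for (a), the L\'evy--Khintchine decomposition reduces everything to the pointwise inequality $\sum_{i}(1-e^{-\lambda_i x_i})\ge 1-e^{-\sum_i\lambda_i x_i}$ (the linear, Gaussian and compensator terms match identically, as you note), and (b) is read off from irreducibility of the mean matrix. Both are fine and match the paper.

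Where you wobble is (c). You write that you ``will need to verify that $\bar{\bar\varphi}\ge$ the relevant off-diagonal exponent so that $\varphi_j^*\le\sum_i\varphi_{ij}(\lambda_i)$''; that direction is backwards. Since $\bar{\bar\varphi}(s)=\sum_{k\neq l}\varphi_{kl}(s)$ is a sum of exponents of (possibly trivial) subordinators, each summand is $\le 0$, so \emph{dropping} terms from this sum makes it larger. In particular, for each fixed $i\neq j$,
\begin{equation*}
\bar{\bar\varphi}(\lambda_i)\ \le\ \varphi_{ij}(\lambda_i)+\sum_{k\neq l,\ (k,l)\neq(i,j)}\varphi_{kl}(\lambda_i)\ \le\ \varphi_{ij}(\lambda_i),
\end{equation*}
and together with $\bar{\varphi}(\lambda_j)\le\tilde\varphi_j(\lambda_j)=\varphi_{jj}(\lambda_j)$ this already gives the weak inequality $\varphi_j^*(\lambda)\le\sum_{i=1}^d\varphi_{ij}(\lambda_i)\le\varphi_j(\lambda)$. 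You had in fact written the correct bound $\bar{\bar\varphi}(\lambda_i)\le\sum_{k\neq i}\varphi_{ki}(\lambda_i)$ a line earlier, so the $\ge$ is just a slip, but it needs fixing.

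The strictness also needs to be pinned down rather than waved at. Your alternative ``or $\bar\varphi<\tilde\varphi_j$ strictly'' is not reliable: $\bar\varphi$ is a minimum and may well coincide with $\tilde\varphi_j$. The correct source of strictness, and the one the paper uses, comes from (b) applied to the index $i$ (not $j$): for each $i\in[d]$ there is $k\neq i$ with $\varphi_{ki}\not\equiv 0$, hence $\varphi_{ki}(\lambda_i)<0$ for $\lambda_i>0$. Since $i\neq j$ forces $(k,i)\neq(i,j)$, the pair $(k,i)$ survives in $\sum_{k\neq l,\ (k,l)\neq(i,j)}\varphi_{kl}(\lambda_i)$, making that leftover sum strictly negative. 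As $d\ge 2$ guarantees the outer sum over $i\neq j$ is nonempty, this gives $\varphi_j^*(\lambda)<\sum_{i=1}^d\varphi_{ij}(\lambda_i)\le\varphi_j(\lambda)$ on $(\mathbb{R}_+\setminus\{0\})^d$. With these two corrections your argument becomes the paper's proof.
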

\begin{proof}  By definition,
\[\sum_{i=1}^d\varphi_{ij}(\lambda_i)=-\sum_{i=1}^da_{i,j}\lambda_i+\frac12q_j\lambda_j^2+
\int_{\mathbb{R}_+^d}\langle\lambda, {\rm x}\rangle1_{\{|{\rm x}|<1\}}+\sum_{i=1}^d(e^{-\lambda_ix_i}-1)\,
\pi_j(d{\rm x})\]
and the first inequality is a consequence of the following one, 
$\sum\limits_{i=1}^d(e^{-\lambda_i x_i}-1)\le e^{-\sum\limits_{i=1}^d\lambda_ix_i}-1$ which is valid for
all $\lambda,{\rm x}\in\mathbb{R}_+^d$. The second assertion follows from irreducibility. The last 
assertion is a consequence of the first inequality and the inequality 
$\varphi_j^*(\lambda)<\sum\limits_{i=1}^d\varphi_{ij}(\lambda_i)$, for all $j\in[d]$ and 
$\lambda\in(\mathbb{R}_+\setminus\{0\})^d$. Indeed, 
\begin{eqnarray*}
\varphi_j^*(\lambda)&=&\bar{\varphi}(\lambda_j)+\sum_{i\neq j}\sum_{k\neq l}\varphi_{kl}(\lambda_i)\\
&=&\bar{\varphi}(\lambda_j)+\sum_{i\neq j}\varphi_{ij}(\lambda_i)+
\sum_{i\neq j}\sum_{k\neq l,(k,l)\neq (i,j)}\varphi_{kl}(\lambda_i)\\
&\le&\sum_{i=1}^d\varphi_{ij}(\lambda_i)+\sum_{i\neq j}\sum_{k\neq l,(k,l)\neq (i,j)}\varphi_{kl}(\lambda_i).
\end{eqnarray*}
Moreover, for the second term, we have 
\[\sum_{i\neq j}\sum_{k\neq l,(k,l)\neq (i,j)}\varphi_{kl}(\lambda_i)\le\sum_{i\neq j}
\sum_{k\neq i}\varphi_{ki}(\lambda_i)<0,\]
where the last inequality follows from the fact that for all $i\in[d]$ there exists 
$k\neq i$ such that $\varphi_{ki}\neq 0$.
\end{proof}

\begin{corollary}\label{comparaison.infty}
Assume that $d\ge2$. Then for all $\lambda\in\Delta$ and $t\geq 0$, 
${\rm u}_{t}(\lambda)\leq {\rm v}_{t}(\lambda)$.
\end{corollary}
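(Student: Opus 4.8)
\textbf{Proof proposal for Corollary \ref{comparaison.infty}.}

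The plan is to prove the comparison $\mathrm{u}_t(\lambda)\le \mathrm{v}_t(\lambda)$ for $\lambda\in\Delta$ by a differential inequality argument, comparing the two systems $(S_\varphi)$ and $(S_{\varphi^*})$. First I would fix $\lambda\in\Delta$ with $\lambda_1>s_0$ (the general case $\lambda_1\le s_0$ or $\lambda=\mathbf{0}$ being handled separately, since then $\mathrm{v}_t(\lambda)$ stays at $s_0$ and one checks $\mathrm{u}_t(\lambda)\le s_0\mathbf{1}$ directly, or by a continuity/monotonicity argument letting $\lambda_1\downarrow s_0$). The key structural input is Lemma \ref{7256}, which gives $\varphi^*_j(\mu)\le\varphi_j(\mu)$ for all $\mu\in(0,\infty)^d$, i.e. the vector field $-\varphi^*$ driving $\mathrm{v}$ dominates the vector field $-\varphi$ driving $\mathrm{u}$. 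Combined with the quasi-monotonicity (cooperativity) of both systems — each $\varphi_i$ and each $\varphi^*_i$ is nonincreasing in the off-diagonal coordinates, so $-\varphi_i$ and $-\varphi_i^*$ are nondecreasing in $\mu_j$ for $j\ne i$ — this is exactly the setup for a Kamke–Müller type comparison theorem for cooperative ODE systems.

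Concretely, I would argue as follows. Both $t\mapsto\mathrm{u}_t(\lambda)$ and $t\mapsto\mathrm{v}_t(\lambda)$ start at $\lambda$, and by Lemma \ref{u.D} (applied with $\lambda\in D_\varphi$, noting $\Delta\cap\{\lambda_1>s_0\}\subset D_\varphi$ since there $\varphi^*_j(\lambda)=f(\lambda_1)>0$ forces $\varphi_j(\lambda)>0$) the trajectory $\mathrm{u}_t(\lambda)$ stays in $D_\varphi\subset(0,\infty)^d$, and similarly $\mathrm{v}_t(\lambda)$ stays in $\Delta$ with positive coordinates decreasing to $s_0$, by Lemma \ref{8116} and Lemma \ref{9525}. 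So both trajectories remain in the region $(0,\infty)^d$ where Lemma \ref{7256} applies. Let $\mathrm{w}_t:=\mathrm{v}_t(\lambda)-\mathrm{u}_t(\lambda)$, so $\mathrm{w}_0=\mathbf{0}$. Suppose for contradiction that $\mathrm{w}_t\ge\mathbf{0}$ fails; let $t_0$ be the infimum of times where some coordinate becomes negative, and $i$ an index with $w^{(i)}_{t_0}=0$ while $w^{(i)}$ is about to go negative and $w^{(j)}_{t_0}\ge0$ for all $j$. At $t_0$ one has $u^{(i)}_{t_0}=v^{(i)}_{t_0}$ and $u^{(j)}_{t_0}\le v^{(j)}_{t_0}$ for $j\ne i$, hence using monotonicity of $-\varphi_i$ in the off-diagonal variables and then Lemma \ref{7256},
\[
\frac{\partial}{\partial t}w^{(i)}_t\Big|_{t=t_0}=-\varphi^*_i(\mathrm{v}_{t_0})+\varphi_i(\mathrm{u}_{t_0})\ge -\varphi^*_i(\mathrm{v}_{t_0})+\varphi_i(\mathrm{v}_{t_0})\ge 0,
\]
contradicting that $w^{(i)}$ is decreasing through $0$ at $t_0$. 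The only subtlety in this standard argument is that a strict inequality is needed to rule out the borderline case; this can be arranged by the usual device of perturbing $\mathrm{v}$ to a supersolution $\mathrm{v}^\varepsilon$ of $(S_\varphi)$ (e.g. replacing $f$ by $f+\varepsilon$, or adding $\varepsilon e^{t}$ to each coordinate, so that the derivative inequality becomes strict) and then letting $\varepsilon\downarrow0$ using continuous dependence on parameters.

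The main obstacle I anticipate is purely the bookkeeping around the boundary of the domains: ensuring that $\mathrm{v}_t(\lambda)$ stays in the region where $\varphi$ is defined and where Lemma \ref{7256} is valid (it is stated on $(\mathbb{R}_+\setminus\{0\})^d$), and handling the degenerate starting points $\lambda_1\le s_0$. For the latter, since $\mathrm{v}_t(\lambda)\equiv s_0\mathbf{1}$ when $\lambda_1=s_0$ and one wants $\mathrm{u}_t(\lambda)\le s_0\mathbf{1}$, I would note $\varphi_j(s_0\mathbf 1)\ge\varphi^*_j(s_0\mathbf 1)=f(s_0)=0$ so $s_0\mathbf 1\in\overline{D}_\varphi$ and the constant path $s_0\mathbf 1$ is a supersolution of $(S_\varphi)$ lying above $\mathrm{u}_0(\lambda)=\lambda\le s_0\mathbf 1$; the same comparison principle then gives $\mathrm{u}_t(\lambda)\le s_0\mathbf 1=\mathrm{v}_t(\lambda)$. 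For $\lambda_1>s_0$ the argument above applies verbatim, and monotonicity in $\lambda$ of both $\mathrm u$ and $\mathrm v$ (Lemma \ref{u.D} and Lemma \ref{9525}) together with continuity extends the inequality to all of $\Delta$, which is what is used in the next step to pass to the limit $\lambda\to\infty$ and conclude $u^{(i)}_t(\infty)\le v^{(i)}_t(\infty)<\infty$.
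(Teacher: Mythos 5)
Your proposal is correct and follows essentially the same route as the paper: both prove a first-crossing-time contradiction driven by Lemma \ref{7256} together with the monotonicity of $\varphi_i$ in the off-diagonal coordinates. One small simplification you can make: the perturbation $\mathrm{v}^\varepsilon$ is unnecessary, because Lemma \ref{7256} already gives the \emph{strict} inequality $\varphi_i^*(\mu)<\varphi_i(\mu)$ for $\mu\in(0,\infty)^d$, and since $\mathrm{v}_{t_0}(\lambda)\in\Delta\cap(0,\infty)^d$ this yields $\frac{\partial}{\partial t}w^{(i)}_t\big|_{t=t_0}>0$ directly (this is exactly how the paper closes the contradiction at the critical time $\tau$).
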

\begin{proof}
Fix $\lambda\in\Delta$ and recall that for all $t\ge0$, ${\rm v}_t(\lambda)\in\Delta$. 
Then from Lemma \ref{7256},
\begin{align*}
{\rm u}_{0}(\lambda) &= {\rm v}_{0}(\lambda)=\lambda
\\ \text{ and } \;\;\;
\dfrac{\partial}{\partial t} u^{(i)}_{t}(\lambda)|_{t=0}
	=-\varphi_{i}(\lambda)
	&< -\varphi^*_{i}(\lambda)
	=\dfrac{\partial}{\partial t} v^{(i)}_{t}(\lambda)|_{t=0}, \; i\in[d].
\end{align*}
Thanks to Taylor's formula, for $t>0$ small enough, $u^{(i)}_{t}(\lambda) < v^{(i)}_{t}(\lambda)$ for all 
$i\in[d]$.

Let $\tau^{(i)}=\inf\{t>0: u^{(i)}_{t}(\lambda)>v^{(i)}_{t}(\lambda)\}>0$ and 
$\tau=\min\limits_{i\in[d]}\tau^{(i)}>0$. Assume $\tau< \infty$. By definition of $\tau$ and by continuity, 
there exists at least one $i\in[d]$ such that $u^{(i)}_{\tau}(\lambda)=v^{(i)}_{\tau}(\lambda)$ and for all 
$j\neq i$, $u^{(j)}_{\tau}(\lambda)\leq v^{(j)}_{\tau}(\lambda)$.
Recall from Lemma \ref{7256} that for all $\lambda\in(\mathbb{R}_+\setminus\{0\})^d$ and $j\in[d]$, 
$\varphi^*_{j}(\lambda)< \varphi_{j}(\lambda)$ and also that if $k\neq j$, then 
$\lambda_{k}\mapsto \varphi^*_{j}(\lambda)$ is non increasing. Note also that, as justified in the proof of 
Lemma \ref{u.D}, $\e_{\rm r}(Z^{(i)}_\tau)>0$, for all ${\rm r}\in\mathbb{R}^{d}_{+}\setminus\{{\bf 0}\}$ 
and $i\in[d]$ and hence $u^{(i)}_{\tau}(\lambda)>0$, for all $i\in[d]$. 
Let $I=\{i\in[d] : u^{(i)}_{\tau}(\lambda)=v^{(i)}_{\tau}(\lambda)\}$, then for all $i\in I$,
\[\dfrac{\partial}{\partial t} u^{(i)}_{t}(\lambda)|_{t=\tau}
	=-\varphi_{i}({\rm u}_{\tau}(\lambda))
	< -\varphi^*_{i}({\rm u}_{\tau}(\lambda))
	\leq -\varphi^*_{i}({\rm v}_{\tau}(\lambda))
	=\dfrac{\partial}{\partial t} v^{(i)}_{t}(\lambda)|_{t=\tau}\,.\ \] 
This implies that for $t>0$ small enough, $u^{(i)}_{\tau+t}(\lambda) < v^{(i)}_{\tau+t}(\lambda)$ for all 
$i\in I$ and this refutes the assumption $\tau<\infty$. We conclude that for all $\lambda\in \Delta$, 
$t\geq 0$ and $i\in[d]$, $u^{(i)}_{t}(\lambda)\leq v^{(i)}_{t}(\lambda)$.
\end{proof}

We are now able to end the proof of part 1.~of Theorem $\ref{4588}$. Let us first assume that $d\ge2$.
Since $(G^{(i)}_{\varphi})$ are satisfied for all $i\in[d]$, we can check that 
\begin{equation}\label{7144}
\int^\infty \frac{ds}{\bar{\varphi}(s)+(d-1)\bar{\bar{\varphi}}(s)}<\infty.
\end{equation}
Indeed, $\lim\limits_{s\rightarrow\infty}|\bar{\bar{\varphi}}(s)|/s$ is bounded by a constant. Moreover, since 
$(G^{(i)}_{\varphi})$ are satisfied and from the behaviour of Laplace exponents of spectrally positive 
L\'evy processes, see Chap.~VII and in \cite{be}, we have
$\lim\limits_{s\rightarrow\infty}\tilde{\varphi}_{i}(s)/s=\infty$, for all $i\in[d]$. This yields that 
$\lim\limits_{s\rightarrow\infty}\bar{\varphi}(s)/s=\infty$.
Hence the above integral is finite if and only if $\int^\infty\frac{ds}{\bar{\varphi}(s)}<\infty$.
Then note the inequality $\frac{1}{\bar{\varphi}(s)}\le\max_{i\in[d]}\frac{1}{\tilde{\varphi}_{i}(s)}\le
\sum_{i\in[d]}\frac{1}{\tilde{\varphi}_{i}(s)}$, which is valid whenever $s>0$ is such that 
$\tilde{\varphi}_{i}(s)>0$ for all $i\in[d]$. This yields,
\[\int^\infty \frac{ds}{\bar{\varphi}(s)} <\sum_{i\in[d]}
\int^\infty \frac{ds}{\tilde{\varphi}_{i}(s)}<\infty\,.\] 
Now recall the definition of $s_0$ from Lemma \ref{8116} and for $s\ge s_0$, let $t\mapsto{\rm v}_t(s{\bf 1})$, 
be the solution of $(S_{\varphi^*})$ in this lemma. Then still according to this lemma, 
$t\mapsto v_t^{(1)}(s{\bf 1})=\dots=v_t^{(d)}(s{\bf 1})$ is the solution of the differential 
equation (\ref{5636}) in Lemma \ref{9525}, with $f(s)=\bar{\varphi}(s)+(d-1)\bar{\bar{\varphi}}(s)$.
Hence from Lemma \ref{9525} and (\ref{7144}), $\lim\limits_{s\rightarrow\infty}v^{(i)}_{t}(s{\bf 1})<\infty$ 
for all $t>0$ and $i\in[d]$. Then as proved in Lemma \ref{7256}, $\varphi^*(\lambda)<\varphi(\lambda)$, for 
all $\lambda\in(\mathbb{R}_+\setminus\{0\})^d$.
Hence from Corollary \ref{comparaison.infty}, for all $\lambda\in \Delta$ and $i\in[d]$, 
$u^{(i)}_t(\lambda)\le v_t^{(i)}(\lambda)$, so that $u^{(i)}_{t}(\infty)<\infty$, for all $t>0$ and $i\in[d]$.
Part $(ii)$~of Theorem \ref{5003} implies that $q_{\rm r}=e^{-\langle {\rm r},\phi(\mathbf{0})\rangle}$.
But since 
\begin{equation}\label{8226}
\left\{\mbox{${\rm Z}_{t}=\mathbf{0}$ for some $t>0$}\right\}\cup
\left\{\lim\limits_{t\rightarrow\infty}{\rm Z}_t=\mathbf{0} \,\text{ and }\, {\rm Z}_t>\mathbf{0},\;
\mbox{for all $t>0$}\right\}=\left\{\lim\limits_{t\rightarrow\infty}{\rm Z}_t=\mathbf{0}\right\},
\end{equation}
we conclude from Theorem \ref{1005} that ${\rm Z}$ can only 
become extinct at a finite time, that is $\bar{q}_{\rm r}=0$ for all ${\rm r}\in\mathbb{R}^d_+$, and
$q_{\rm r}:=\p_{\rm r}(\mbox{${\rm Z}_{t}=\mathbf{0}$ for some $t>0$})=
\p_{\rm r}(\lim\limits_{t\rightarrow\infty}{\rm Z}_t=\mathbf{0})$. If $d=1$, then we know directly from 
Grey's result that $u_t(\infty)<\infty$, for all $t\ge0$ and the same conclusion follows.\\

Let us now prove part 2.~of Theorem \ref{4588}. We will show that if one of the diagonal branching processes
$\tilde{Z}^{(i)}$ defined at the beginning of this subsection is extinguished at infinity, that is if 
$(G_{\varphi}^{(i)})$ does not hold for some $i\in[d]$, then ${\rm Z}$ also is extinguished at infinity. 
Let $i\in[d]$ and first note that since 
$\varphi_i$ is non increasing in the variables $\lambda_j$, for $j\neq i$, for all $\lambda\in(0,\infty)^d$, 
\begin{equation}\label{5558}
\varphi_{i}(\lambda)\le\varphi_{i}(\lambda_{i}{\rm e}_{i})=\tilde{\varphi}_{i}(\lambda_{i})\,.
\end{equation}
Take $\lambda\in D_{\varphi}$ so that from Lemma \ref{u.D}, ${\rm u}_{s}(\lambda)\in D_\varphi$ 
and in particular, $\varphi_{i}({\rm u}_{s}(\lambda))>0$, for all $s\ge0$.
Then we derive from the differential system $(S_{\varphi})$ that for all $s\ge0$,
$\dfrac{\frac{\partial }{\partial s}u^{(i)}_{s}(\lambda)}{-\varphi_{i}({\rm u}_{s}(\lambda))}=1$, so that
integrating over $[0,t]$ and using the above inequality, we obtain for all $t\geq 0$,
\begin{align*}
t=\int_{0}^{t}
	\dfrac{\frac{\partial }{\partial s}u^{(i)}_{s}(\lambda)}{-\varphi_{i}({\rm u}_{s}(\lambda))}ds
	\ge\int_{0}^{t}
		\dfrac{\frac{\partial }{\partial s}u^{(i)}_{s}(\lambda)}{-\tilde{\varphi_{i}}(u^{(i)}_{s}(\lambda))}ds
	= \int_{u^{(i)}_{t}(\lambda)}^{\lambda_{i}}
		\dfrac{d\alpha}{\tilde{\varphi}_{i}(\alpha)}.
\end{align*}
Since none of the L\'evy processes $X^{i,i}$ is a subordinator, 
$\lim\limits_{s\rightarrow\infty}\varphi_i(s{\bf 1})=\infty$, for all $i\in[d]$, so that there is $A>0$ such that 
for all $s>A$, $s{\bf 1}\in D_\varphi$. 
Fix $t>0$ and make $\lambda=s{\bf 1}$ tend to infinity. If $u_{t}^{(i)}(\infty)<\infty$ 
and if $(G_{\varphi}^{(i)})$ is not satisfied, then the right hand side of the above inequality tends to 
$\infty$, which is a contradiction. Therefore $u^{(i)}_{t}(\infty)=\infty$ and we derive from Lemma 
$\ref{asympu}$ that for all $t\geq 0$ and $i\in[d]$, $u^{(i)}_{t}(\infty)=\infty$. 
In particular, for all ${\rm r}\in\mathbb{R}_+^{d}\setminus\{\mathbf{0}\}$, $\p_{\rm r}({\rm Z}_t=\mathbf{0})=
e^{-\langle {\rm r},{\rm u}_{t}(\infty)\rangle}=0$. Thus 
$q_{\rm r}=\lim\limits_{t\rightarrow\infty}\p_{\rm r}({\rm Z}_t=\mathbf{0})=0$ and the MCSBP ${\rm Z}$ can only 
be extinguished at infinity. Finally, it follows from Theorem $\ref{1005}$ and (\ref{8226}) that 
\[\bar{q}_{\rm r}:=\p_{\rm r}\left(\lim\limits_{t\rightarrow\infty}{\rm Z}_t=\mathbf{0}\;\;\text{and}\;\;
{\rm Z}_t>\mathbf{0},\;\mbox{for all $t>0$}\right)
=\p_{\rm r}(\lim\limits_{t\rightarrow\infty}{\rm Z}_t=\mathbf{0})=e^{-\langle {\rm r},\phi(\mathbf{0})\rangle},\]
which ends the proof of Theorem $\ref{4588}$. $\qquad\Box$\\

\renewcommand{\theequation}{\Alph{section}.\arabic{equation}}
\appendix{}
\section{Proof of Proposition \ref{8260}}\label{appendix}

Recall that a function $x:\mathbb{R}_+\rightarrow\mathbb{R}^n$, $n\ge1$ is said to be \emph{c\`adl\`ag}, 
if it is right continuous on $\mathbb{R}_+$ and has left limits on $(0,\infty)$. Such a function is said 
to be \emph{downward skip free} if for all $s>0$, $x(s)-x(s-)\ge0$. 
We will use the notation $x_t$ or $x(t)$ indifferently.

\begin{definition}\label{defEd(1.5)}
We call $\mathcal{E}_d$, the set of matrix valued functions $\textsc{x}=\{(x^{i,j}_{t_{j}})_{i,j\in[d]},\,
{\rm t}=(t_{1},\dots, t_{d})\in\mathbb{R}^{d}_{+}\}$ such that
for all $i,j$, $x^{i,j}$ is a c\`adl\`ag function and
\begin{itemize}
\item[$(i)$] $x^{i,j}_0=0$, for all $i,j\in[d]$,
\item[$(ii)$] for all $i\in[d]$, $x^{i,i}$ is downward skip free,
\item[$(iii)$]   for all $i,j\in[d]$ such that  $i\neq j$, $x^{i,j}$ is non-decreasing.
\end{itemize}
\end{definition}

\noindent For ${\rm s}\in \overline{\mathbb{R}}_+^d$, we denote by $[d]_{\rm s}$ the set of indices of finite 
coordinates of ${\rm s}$, that is $[d]_{\rm s}=\{i\in[d]:s_i<\infty\}$. For $i\neq j$, we set 
$x^{i,j}(\infty)=x^{i,j}(\infty-)=\lim\limits_{s\rightarrow\infty} x^{i,j}(s)$.
\begin{definition}\label{(r,x)(1.5)}
Let $\textsc{x}\in\mathcal{E}_d$ and ${\rm r}=(r_1,\dots,r_d)\in\mathbb{R}_+^d$. Then 
${\rm s}\in \overline{\mathbb{R}}_+^d$ is called a solution of the system $({\rm r},\textsc{x})$ if it satisfies
\[({\rm r},\textsc{x})\qquad\qquad r_i+\sum_{j=1}^dx^{i,j}(s_j-)=0\,,\;\;\;i\in [d]_{\rm s}\,.\]
$($In particular, ${\rm s}=(\infty,\infty,\dots,\infty)$ is always a solution of the system 
$({\rm r},\textsc{x})$.$)$
\end{definition}

\noindent Note that in $({\rm r},\textsc{x})$ it is implicit that 
$\sum\limits_{j\in [d]\setminus [d]_{\rm s}}x^{i,j}(s_j-)<\infty$, for all $i\in [d]_{\rm s}$, although by 
definition $s_{j}=\infty$, for $j\in[d]\setminus[d]_{\rm s}$. Let us state Lemma $2.3$ in \cite{cma1} which gives 
the existence and 
uniqueness of a smallest solution to the system $({\rm r},\textsc{x})$.

\begin{lemma}\label{1377} Let $\textsc{x}\in\mathcal{E}_d$ and ${\rm r}=(r_1,\dots,r_d)\in\mathbb{R}_+^d$. 
Then there exists a solution ${\rm s}=(s_1,\dots,s_d)\in\overline{\mathbb{R}}_+^d$ of the system 
$({\rm r},\textsc{x})$ such that any other solution ${\rm t}$ of $({\rm r},\textsc{x})$ satisfies 
${\rm t}\ge {\rm s}$. The solution ${\rm s}$ will be called {\em the smallest solution} of the system 
$({\rm r},\textsc{x})$. 
\end{lemma}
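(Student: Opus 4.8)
The plan is to construct \emph{the} smallest solution explicitly, as the limit of a monotone iteration; recall that $(\infty,\dots,\infty)$ is always a solution, so the content of the statement lies in the minimality. The crucial analytic fact I would use is that if $y:\mathbb{R}_+\to\mathbb{R}$ is c\`adl\`ag, downward skip free and $y(0)=0$, then its running infimum $\underline{y}(t):=\inf_{s\le t}y(s)$ is \emph{continuous} and non-increasing: right continuity is inherited from $y$, while a jump of $\underline{y}$ from the left at some $t$ would force $y(t)<\inf_{s<t}y(s)\le y(t-)$, i.e.\ a negative jump of $y$, which is excluded. Hence for every threshold $c\le0$ the set $\{t\ge0:\underline{y}(t)\le c\}$ is empty or a closed half-line $[\tau,\infty)$, and when $c<0$ the skip free property gives $\underline{y}(\tau)=y(\tau-)=c$ at the left endpoint $\tau$.

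First I would introduce the operator $\Phi:\overline{\mathbb{R}}_+^d\to\overline{\mathbb{R}}_+^d$ given coordinatewise by
\[
\Phi({\rm s})_i:=\inf\Big\{t\ge0:\ \underline{x^{i,i}}(t)\le -r_i-\sum_{j\neq i}x^{i,j}(s_j-)\Big\},\qquad i\in[d],
\]
with $\inf\emptyset=\infty$, and with the convention that the threshold equals $-\infty$ (so $\Phi({\rm s})_i=\infty$) whenever $\sum_{j\neq i}x^{i,j}(s_j-)=\infty$; here $x^{i,j}(s_j-)$ is read with $x^{i,j}(\infty-)=\lim_{s\to\infty}x^{i,j}(s)\in[0,\infty]$ and $x^{i,j}(0-)=0$. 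Since each off-diagonal $x^{i,j}$ is non-decreasing, enlarging ${\rm s}$ only lowers each threshold, hence makes the corresponding first passage time of $\underline{x^{i,i}}$ larger; thus $\Phi$ is non-decreasing for the product order. Then I would iterate from ${\rm s}^{(0)}=\mathbf{0}$, setting ${\rm s}^{(n+1)}=\Phi({\rm s}^{(n)})$: since ${\rm s}^{(1)}=\Phi(\mathbf{0})\ge\mathbf{0}$, monotonicity of $\Phi$ yields ${\rm s}^{(n)}\le{\rm s}^{(n+1)}$ for all $n$, so the sequence increases coordinatewise to a limit ${\rm s}^*\in\overline{\mathbb{R}}_+^d$.

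The second step is to verify that ${\rm s}^*$ solves $({\rm r},\textsc{x})$. Fix $i\in[d]_{{\rm s}^*}$, so $s_i^*<\infty$, hence $s_i^{(n)}<\infty$ for all $n$. For $j\neq i$, monotonicity of $x^{i,j}$ gives $x^{i,j}(s_j^{(n)}-)\uparrow x^{i,j}(s_j^*-)$, so the thresholds $c_i^{(n)}:=-r_i-\sum_{j\neq i}x^{i,j}(s_j^{(n)}-)$ decrease to $c_i^*:=-r_i-\sum_{j\neq i}x^{i,j}(s_j^*-)$, which is finite (otherwise $\underline{x^{i,i}}(s_i^*)$ would be $-\infty$, impossible on a compact time interval). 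By construction and continuity of $\underline{x^{i,i}}$ one has $\underline{x^{i,i}}(s_i^{(n+1)})=c_i^{(n)}$ and $\underline{x^{i,i}}(t)>c_i^{(n)}$ for $t<s_i^{(n+1)}$; letting $n\to\infty$ and using $s_i^{(n+1)}\uparrow s_i^*$ together with continuity of $\underline{x^{i,i}}$, I obtain $\underline{x^{i,i}}(s_i^*)=c_i^*$ with $s_i^*=\inf\{t:\underline{x^{i,i}}(t)\le c_i^*\}$. The skip free property then upgrades this to $x^{i,i}(s_i^*-)=c_i^*$, that is $r_i+\sum_{j}x^{i,j}(s_j^*-)=0$; since $i\in[d]_{{\rm s}^*}$ was arbitrary, ${\rm s}^*$ is a solution.

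Finally, for minimality let ${\rm t}$ be any solution of $({\rm r},\textsc{x})$; I would prove ${\rm t}\ge{\rm s}^{(n)}$ for all $n$ by induction, the case $n=0$ being trivial. Assuming ${\rm t}\ge{\rm s}^{(n)}$, fix $i\in[d]$: if $t_i=\infty$ there is nothing to prove, while if $i\in[d]_{{\rm t}}$ the defining equation gives $x^{i,i}(t_i-)=-r_i-\sum_{j\neq i}x^{i,j}(t_j-)\le c_i^{(n)}$, using ${\rm t}\ge{\rm s}^{(n)}$ and monotonicity of the $x^{i,j}$. Hence $\underline{x^{i,i}}(t_i)\le x^{i,i}(t_i-)\le c_i^{(n)}$, so $t_i\ge\inf\{t:\underline{x^{i,i}}(t)\le c_i^{(n)}\}=s_i^{(n+1)}$, i.e.\ ${\rm t}\ge{\rm s}^{(n+1)}$. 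Letting $n\to\infty$ gives ${\rm t}\ge{\rm s}^*$, proving that ${\rm s}^*$ is the smallest solution. The main obstacle is the second step: one must keep track of the \emph{strict} left-limits $x^{i,j}(s_j-)$ appearing in the system together with the possibility that some coordinates are infinite, and it is precisely the continuity of the running infima $\underline{x^{i,i}}$ — a consequence of downward skip freeness — that forces the monotone limit to land exactly on a solution rather than overshoot it.
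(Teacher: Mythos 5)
Your iteration scheme is a correct and self-contained proof of the lemma. Since this lemma is only quoted in the present paper (the proof itself lives in \cite{cma1}, Lemma~2.3), I cannot point you to the authors' argument here, so let me instead record what your approach buys and where the load-bearing steps are.

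The heart of your proof is the analytic preliminary: for a c\`adl\`ag downward skip free $y$ with $y(0)=0$, the running infimum $\underline{y}$ is continuous and non-increasing, the set $\{t:\underline{y}(t)\le c\}$ is $\emptyset$ or a closed half-line $[\tau,\infty)$, and for $c<0$ and $\tau<\infty$ one has $y(\tau-)=\underline{y}(\tau)=c$. You state this without a full argument; the point worth spelling out is that $\inf_{s<\tau}y(s)=c$ together with $y(s)>c$ on $[0,\tau)$ forces $y(\tau-)=c$, since otherwise $y>c+\varepsilon$ near $\tau-$ while $\inf_{[0,\tau-\delta]}y=\underline{y}(\tau-\delta)>c$, contradicting $\inf_{[0,\tau)}y=c$. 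Given this, the operator $\Phi$ is well defined and monotone for the product order because each off-diagonal $x^{i,j}$ is non-decreasing, and the iteration ${\rm s}^{(n+1)}=\Phi({\rm s}^{(n)})$ from $\mathbf{0}$ is increasing; the limit ${\rm s}^*$ is shown to solve $({\rm r},\textsc{x})$ by passing to the limit in the first-passage identity, using continuity of $\underline{x^{i,i}}$ and left-continuity of $t\mapsto x^{i,j}(t-)$ for the off-diagonals (the degenerate subcases where $c_i^{(n)}=-\infty$, or $s_j^{(n)}$ stabilizes, or $s_j^*=\infty$, are all handled correctly by your conventions). The minimality is the cleanest part: for any solution ${\rm t}$ with $t_i<\infty$, the defining equation plus $\underline{x^{i,i}}(t_i)\le x^{i,i}(t_i-)$ puts $t_i$ in the hitting set $\{t:\underline{x^{i,i}}(t)\le c_i^{(n)}\}$, whence $t_i\ge s_i^{(n+1)}$, and if $\sum_{j\ne i}x^{i,j}(s_j^{(n)}-)=\infty$ the equation for ${\rm t}$ is impossible unless $t_i=\infty$, so the induction still closes.

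One remark on why the construction must go through the running infimum rather than a naive first-passage of $x^{i,i}$ itself: a solution ${\rm t}$ only asserts $x^{i,i}(t_i-)=c_i({\rm t})$, not that $t_i$ is the \emph{first} time this level is touched, so solutions are generally not fixed points of a first-passage map based on $x^{i,i}$. Your $\Phi$, built from $\underline{x^{i,i}}$, sidesteps this: it is monotone, its least fixed point is the minimal solution, and the skip-free property is exactly what guarantees that the hitting time of $\underline{x^{i,i}}$ lands on the equation rather than overshooting. This is a standard but efficient Knaster--Tarski-type argument and I see no gap in it.
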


For ${\rm r}=(r_{1},\dots,r_{d})\in\mathbb{R}^{d}_{+}$ and $\textsc{x}\in\mathcal{E}_{d}$, let us now consider 
the functional equation,
\begin{equation}\label{RL}
z^{(i)}_{t} = r_{i} + \sum\limits_{j=1}^{d} x^{i,j}\left(a^{(j)}_t\right), \;\; t\geq 0, i\in[d]\,,
\end{equation}
where $a^{(j)}_t:=\int_0^t z^{(j)}_{s}\,ds$. Existence and uniqueness of solutions of this equation are studied
in Section 3 of \cite{cpgub}. According to a definition in \cite{cpgub}, will say that a solution $z$ 
to (\ref{RL}) has {\it no spontaneous generation} if whenever $z_t^{(i)}=0$ for some $t\ge0$ and all $i$ in some 
subset $I\subset[d]$, the function $s\mapsto a_s^{(i)}$ strictly increases at $t$ for some $i\in I$ only if 
there exists $j\notin I$ such that $x^{i,j}_{a^{(j)}_t}$ increases strictly to the right of $t$. A solution $z$
to (\ref{RL}) is said to be {\it non-negative} if all its coordinates $z^{(i)}$, $i\in[d]$ are non-negative.
Moreover, a non-negative solution $z$ to (\ref{RL}) is said to be {\it conservative} if $z^{(i)}_t<\infty$ for 
all $i\in[d]$ and $t\ge0$. 

\begin{lemma}\label{égalitéps(1.5)}
Let ${\rm r}=(r_{1},\dots,r_{d})\in\mathbb{R}^{d}_{+}$ and $\textsc{x}\in\mathcal{E}_{d}$. Denote by 
${\rm t}_{\rm r}^\textsc{x}$ the smallest solution of the system $({\rm r},\textsc{x})$. Assume that 
$\textsc{x}$ is continuous at time ${\rm t}_{\rm r}^\textsc{x}$ $($i.e. 
$x^{i,j}_{{\rm t}_{\rm r}^{\textsc{x},j}-}=x^{i,j}_{{\rm t}_{\rm r}^{\textsc{x},j}}$, $i,j\in[d]$$)$ and 
that there is a non-negative and conservative solution $z$ to the equation $(\ref{RL})$ which has no spontaneous 
generation. Then
\begin{equation}\label{lien tr int det (1.5)}
{\rm t}^{\textsc{x}}_{\rm r}=\lim_{t\rightarrow +\infty}(a^{(1)}_t,\dots,a^{(d)}_t).
\end{equation}
\end{lemma}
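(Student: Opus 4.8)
The plan is to prove the two inequalities ${\rm t}^{\textsc{x}}_{\rm r}\le \lim_{t\to\infty}(a^{(1)}_t,\dots,a^{(d)}_t)$ and ${\rm t}^{\textsc{x}}_{\rm r}\ge \lim_{t\to\infty}(a^{(1)}_t,\dots,a^{(d)}_t)$ separately, where I write ${\rm a}_\infty:=\lim_{t\to\infty}(a^{(1)}_t,\dots,a^{(d)}_t)\in\overline{\mathbb{R}}_+^d$ (the limit exists since each $a^{(j)}$ is nondecreasing). First I would observe that, by the \emph{no spontaneous generation} hypothesis together with non-negativity, for each $i$ the coordinate $z^{(i)}$ can leave $0$ only through a strict increase of some off-diagonal term $x^{i,j}_{a^{(j)}_t}$ with $j$ such that $z^{(j)}$ is currently positive; this is the mechanism that ties the limiting behaviour of ${\rm z}$ to the hitting-time system $({\rm r},\textsc{x})$.

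\textbf{The inequality ${\rm a}_\infty \ge {\rm t}^{\textsc{x}}_{\rm r}$.} Suppose first that ${\rm a}_\infty\in\mathbb{R}_+^d$, i.e.\ all $a^{(j)}_\infty<\infty$. Then for each $i$, letting $t\to\infty$ in \eqref{RL} and using right-continuity / continuity of $\textsc{x}$ at ${\rm a}_\infty$ (which follows from the assumed continuity at ${\rm t}^{\textsc{x}}_{\rm r}$ once we know ${\rm a}_\infty={\rm t}^{\textsc{x}}_{\rm r}$, but a priori we only get the value $z^{(i)}_\infty:=r_i+\sum_j x^{i,j}(a^{(j)}_\infty)$), one shows $z^{(i)}_\infty=0$: indeed if some $z^{(i)}_\infty>0$ then $a^{(i)}_t=\int_0^t z^{(i)}_s\,ds\to\infty$, contradicting $a^{(i)}_\infty<\infty$ (using that $z^{(i)}$ is continuous, being an integral plus a fixed càdlàg path, hence stays bounded below near $\infty$). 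Hence ${\rm a}_\infty$ solves $r_i+\sum_j x^{i,j}(a^{(j)}_\infty-)=0$ for every $i\in[d]$, so ${\rm a}_\infty$ is a solution of $({\rm r},\textsc{x})$ and by minimality (Lemma \ref{1377}) ${\rm a}_\infty\ge{\rm t}^{\textsc{x}}_{\rm r}$. If instead ${\rm a}_\infty\notin\mathbb{R}_+^d$, one restricts attention to the set $J=\{j:a^{(j)}_\infty<\infty\}$ and runs the same argument on those coordinates to check that ${\rm a}_\infty$ is still a solution of the system in the sense of Definition \ref{(r,x)(1.5)} (this is where the implicit finiteness condition $\sum_{j\notin J}x^{i,j}(s_j-)<\infty$ for $i\in J$ has to be verified, again from $z^{(i)}_\infty=0$), and minimality gives ${\rm a}_\infty\ge{\rm t}^{\textsc{x}}_{\rm r}$ coordinatewise.

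\textbf{The inequality ${\rm a}_\infty \le {\rm t}^{\textsc{x}}_{\rm r}$.} Here I would argue that ${\rm z}$ cannot make $a^{(j)}$ grow past $t^{\textsc{x},j}_{\rm r}$. Let ${\rm s}:={\rm t}^{\textsc{x}}_{\rm r}$ and let $\theta:=\inf\{t\ge0: a^{(j)}_t\ge s_j \text{ for some } j\in[d]_{\rm s}\}$, with the convention $\inf\emptyset=\infty$; the goal is to show $a^{(j)}_t\le s_j$ for all $t$ and all $j$, equivalently that past $\theta$ no further increase occurs on the coordinates that have reached their bound. At time $\theta$, for each $j\in[d]_{\rm s}$ with $a^{(j)}_\theta=s_j$ one has, by continuity of $\textsc{x}$ at ${\rm s}$ and using \eqref{RL}, that $z^{(j)}_\theta=r_j+\sum_k x^{j,k}(a^{(k)}_\theta)\le r_j+\sum_k x^{j,k}(s_k-)=0$ provided all the relevant $a^{(k)}_\theta\le s_k$ — which holds by definition of $\theta$ and the fact that the off-diagonal $x^{j,k}$ are nondecreasing while $x^{j,j}$ is downward skip free (so $x^{j,j}(a^{(j)}_\theta)=x^{j,j}(s_j)\le$ the relevant value; the skip-free property guarantees the path does not overshoot $-r_j$ from above). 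Thus $z^{(j)}_\theta=0$ for all such $j$, and then the \emph{no spontaneous generation} hypothesis, combined with the minimality of ${\rm s}$ (there is no smaller solution, so no off-diagonal driver from a coordinate $k\notin[d]_{\rm s}$ can make these coordinates leave $0$ — one shows that such an escape would produce a solution of $({\rm r},\textsc{x})$ strictly below ${\rm s}$, a contradiction), forces $a^{(j)}$ to remain constant equal to $s_j$ for $t\ge\theta$. Iterating this over the (finitely many) coordinates in $[d]_{\rm s}$ in the order they are hit gives $a^{(j)}_\infty\le s_j$ for all $j$, i.e.\ ${\rm a}_\infty\le{\rm t}^{\textsc{x}}_{\rm r}$. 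Combining the two inequalities yields \eqref{lien tr int det (1.5)}.

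\textbf{Expected main obstacle.} The delicate point is the direction ${\rm a}_\infty\le{\rm t}^{\textsc{x}}_{\rm r}$, specifically making rigorous the claim that once the coordinates in $[d]_{\rm s}$ reach $0$ they stay there: this is exactly where \emph{no spontaneous generation} and the \emph{minimality} of the smallest solution must be used in tandem, and one has to carefully exclude the scenario in which some coordinate $z^{(i)}$ with $i\in[d]\setminus[d]_{\rm s}$ (which never reaches $0$ and whose clock $a^{(i)}$ keeps running) feeds an off-diagonal jump into a coordinate $j\in[d]_{\rm s}$ and revives it. Handling the càdlàg/left-limit bookkeeping at the (possibly jump) times of $\textsc{x}$ — here softened by the continuity hypothesis of $\textsc{x}$ at ${\rm t}^{\textsc{x}}_{\rm r}$ — and the case where some $s_j=\infty$ simultaneously are the technical hot spots; everything else is routine monotonicity and the defining integral relation $a^{(j)}_t=\int_0^t z^{(j)}_s\,ds$.
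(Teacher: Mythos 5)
Your overall structure matches the paper's: show $\lim_t {\rm a}_t$ solves $({\rm r},\textsc{x})$ and hence dominates the smallest solution ${\rm s}:={\rm t}^{\textsc{x}}_{\rm r}$; then show that once some $a^{(j)}$ reaches $s_j$ the corresponding $z^{(j)}$ hits $0$ and is trapped there, preventing $a^{(j)}$ from exceeding $s_j$. That is the right plan and the $\ge$ direction is fine.

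There is, however, a real gap precisely at the point you flagged as the ``delicate point,'' and your proposed fix is the wrong mechanism. You argue that a coordinate $j$ with $z^{(j)}_\theta=0$ cannot be revived by an off-diagonal driver $x^{j,k}$ with $k\notin[d]_{\rm s}$ ``because minimality of ${\rm s}$ would be violated; such an escape would produce a solution of $({\rm r},\textsc{x})$ strictly below ${\rm s}$.'' This is not a valid deduction: an escaping trajectory does not obviously manufacture a smaller solution of the hitting system, and in any case minimality only gives information \emph{below} ${\rm s}$, not about what $\textsc{x}$ does on $[{\rm a}_\theta,{\rm s})$. The actual argument that blocks revival is different and does not use minimality at all. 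From $z^{(j)}_\theta=0$, nonnegativity of $z$, the defining identity $r_j+\sum_k x^{j,k}(s_k-)=0$ and the monotonicity of the off-diagonal $x^{j,k}$ together with $a^{(k)}_\theta\le s_k$, one gets \emph{forcibly} that $x^{j,k}(a^{(k)}_\theta)=x^{j,k}(s_k-)$ for every $k\neq j$, i.e.\ $x^{j,k}$ is \emph{constant} on the entire interval $[a^{(k)}_\theta,s_k)$ (which is $[a^{(k)}_\theta,\infty)$ when $k\notin[d]_{\rm s}$). Hence no $x^{j,k}_{a^{(k)}_\cdot}$, whether driven by $k\in[d]_{\rm s}$ or $k\notin[d]_{\rm s}$, can strictly increase as long as each $a^{(k)}$ remains $<s_k$, and \emph{no spontaneous generation} then keeps $z^{(j)}$ at $0$. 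This is exactly the step the paper carries out (in the reduced case $d=2$), and it is where your argument needs to be repaired. Relatedly, the remark about downward skip-freeness of $x^{j,j}$ ``preventing overshoot of $-r_j$'' is not what is used here: what is used is the assumed continuity of $\textsc{x}$ at ${\rm t}^{\textsc{x}}_{\rm r}$, so that $x^{j,j}(a^{(j)}_\theta)=x^{j,j}(s_j)=x^{j,j}(s_j-)$.
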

\begin{proof}
Let us first note that $\lim\limits_{t\rightarrow +\infty}{\rm a}_{t}=\lim\limits_{t\rightarrow\infty}(a^{(1)}_t,\dots,a^{(d)}_t)$ is a solution of  the system $({\rm r},\textsc{x})$. 
Indeed, if $a^{(i)}_{\infty}:=\lim\limits_{t\rightarrow\infty}a^{(i)}_t=\infty$ for all $i\in[d]$, then 
$\lim\limits_{t\rightarrow +\infty}{\rm a}_{t}$ is a solution by Definition \ref{(r,x)(1.5)}. 
On the other hand, assume that there is $i\in[d]$ such that $a^{(i)}_\infty=\int_0^\infty z^{(i)}_{s}\,ds<\infty$. 
Since for all $i\neq j$, $x^{i,j}$ is non-decreasing, according to the functional equation (\ref{RL}), 
$z^{(i)}_{\infty}:=\lim\limits_{t\rightarrow\infty}z^{(i)}_{t}$ exists and $z^{(i)}_{\infty}\in[0,\infty]$. 
But $\int_0^\infty z^{(i)}_{s}\,ds<\infty$, so that $z^{(i)}_{\infty}=0$. From (\ref{RL}) and  Definition
\ref{(r,x)(1.5)}, $\lim\limits_{t\rightarrow +\infty}{\rm a}_{t}$ is a solution of the system 
$({\rm r},\textsc{x})$. In particular, from  Lemma \ref{1377}, 
$\lim\limits_{t\rightarrow +\infty}{\rm a}_{t}\ge{\rm t}^{\textsc{x}}_{\rm r}$.

Now observe that the functions $s\mapsto a^{(i)}_s$ are continuous and non decreasing and assume that $t$ is 
such that $t:=\inf\{s:a^{(i)}_s=t^{\textsc{x},i}_{\rm r}\}<\infty$ for some indices $i$ (in particular 
$t^{\textsc{x},i}_{\rm r}<\infty$) and $a^{(j)}_t<t^{\textsc{x},j}_{\rm r}$ for all other indices. With no loss 
of generality, we can assume that $d=2$, $t:=\inf\{s:a^{(1)}_s=t^{\textsc{x},1}_{\rm r}\}<\infty$ and 
$a^{(2)}_t<t^{\textsc{x},2}_{\rm r}$. Since $x^{1,2}$ is non decreasing, 
$x^{1,2}_{a^{(2)}_t-}\le x^{1,2}_{t^{\textsc{x},2}_{\rm r}-}$ and 
$x^{1,2}_{a^{(2)}_t}\le x^{1,2}_{t^{\textsc{x},2}_{\rm r}-}$.
Moreover since  by definition $r_1+x^{1,1}_{t^{\textsc{x},1}_{\rm r}-}+x^{1,2}_{t^{\textsc{x},2}_{\rm r}-}=0$, 
it follows, by the assumption of continuity of $x^{1,1}$ at $t_{\rm r}^{\textsc{x},1}=a^{(1)}_{t}$, that
\begin{equation}\label{6240}
r_1+x^{1,1}_{a^{(1)}_t}+x^{1,2}_{a^{(2)}_t}\le0.
\end{equation}
If $x^{1,2}_{a^{(2)}_t}<x^{1,2}_{t^{\textsc{x},2}_{\rm r}-}$, then it follows from above that $z^{(1)}_{t}<0$, 
which is a contradiction. Therefore $x^{1,2}_{a^{(2)}_t}=x^{1,2}_{t^{\textsc{x},2}_{\rm r}-}$, that is $x^{1,2}$ 
is constant on the interval $[a^{(2)}_t,t^{\textsc{x},2}_{\rm r})$ and $z^{(1)}_{t}=0$.

Since by assumption ${\rm z}$ has no spontaneous generation, it implies that $z^{(1)}$ is absorbed in 0 at time 
$t$, that is $z^{(1)}_{s}=0$, for $s\in[t,t']$, where $t<t'\le\infty$ is such that 
$t'=\inf\{s:a^{(2)}_{s}=t^{\textsc{x},2}_{\rm r}\}$. Since $s\mapsto a^{(1)}_{s}$ is constant on $[t,t']$, it 
follows that $a^{(1)}_{t'}=t^{\textsc{x},1}_{\rm r}$, so that 
$(a^{(1)}_{t'},a^{(2)}_{t'})={\rm t}^{\textsc{x}}_{\rm r}$. This implies, by the assumption of continuity of 
$\textsc{x}$ at time ${\rm t}_{\rm r}^\textsc{x}$, that $z_{t'}=0$ and since $z$ has no 
spontaneous generation, $z_s=0$, for all $s\ge t'$ so that 
$(a^{(1)}_{t'},a^{(2)}_{t'})=\lim_{s\rightarrow\infty} {\rm a}_s$.\\
\end{proof}

\noindent {\it Proof of Proposition $\ref{8260}$}: The proof is a direct application of Lemma 
\ref{égalitéps(1.5)} to the Lamperti representation $(\ref{Lamperti})$. As already mentioned, from Theorem 1
in \cite{cpgub}, for any ${\rm r}\in\mathbb{R}_+^d$ and any family of L\'evy processes $X^{(i)}$, $i\in[d]$ 
defined as in Subsection \ref{8225} and satisfying $\alpha_i=0$, $i\in[d]$, there is a unique solution 
${\rm Z}$ to the equation $(\ref{Lamperti})$. Moreover, as a MCSBP, ${\rm Z}$ is non-negative and its paths 
have no spontaneous generation in the sense defined before Lemma \ref{égalitéps(1.5)}. Moreover, from 
Proposition 3.1 of \cite{cma1}, for  every $i\in[d]$, ${\rm X}^{(i)}$ is a.s. continuous at time 
$T^{(i)}_{\rm r}$. Then on the set $\{{\rm Z}_t\in\mathbb{R}^d_{+},\,t\ge0\}$, which is supposed to be of 
probability 1 here, the paths of the solution ${\rm Z}$ are conservative, in the sense given above. 
Finally Proposition $\ref{8260}$ follows from Lemma \ref{égalitéps(1.5)}. $\;\;\;\Box$.

\newpage

\end{document}